\newenvironment{proof}
{\par\noindent\textbf{Proof}}{\eop\smallskip\vskip 3 pt}
\newcommand{\eop}
{\hspace*{\fill}{
		$\vcenter{\hrule height1pt
		\hbox{\vrule width1pt height5pt
		\kern5pt \vrule width1pt} \hrule height1pt}$} }
\newcommand{\ItemSp}{5mm}
\newcommand{\ItemLong}{8mm}
\newcommand{\figsize}{0.6\textwidth} 
\DeclareMathOperator*{\argmin}{\arg\min}
\newcommand{\ifeq}{\text{\normalfont if }}
\newcommand{\otherw}{\text{\normalfont otherwise}}
\newcommand{\ConfSp}[1]{\IfConf{\vspace{#1}}{ }}
\newcommand{\inter}{\text{\normalfont int}}
\newcommand{\dc}{\reals^n \times {\cal U}_c \times {\cal W}_c}
\newcommand{\dd}{\reals^n \times {\cal U}_d \times {\cal W}_d}
\newcommand{\Uc}{{\cal U}_c}
\newcommand{\Ud}{{\cal U}_d}
\renewcommand{\u}{_u}
\newcommand{\Cu}{C\u}
\newcommand{\Fu}{F\u}
\newcommand{\Wc}{{\cal W}_c}
\newcommand{\Wd}{{\cal W}_d}
\newcommand{\w}{_w}
\newcommand{\Hw}{\HS\w}
\newcommand{\Cw}{C\w}
\newcommand{\Dw}{D\w}
\newcommand{\Fw}{F\w}
\newcommand{\Gw}{G\w}
\newcommand{\Lw}{L\w}
\newcommand{\uw}{_{u,w}} 
\newcommand{\Huw}{\HS\uw}
\newcommand{\Cuw}{C\uw}
\newcommand{\Duw}{D\uw}
\newcommand{\Fuw}{F\uw}
\newcommand{\Guw}{G\uw}
\newcommand{\datauw}{(\Cuw, \Fuw, \Duw, \Guw)}
\newcommand{\Hcl}{\Hw}
\newcommand{\Ccl}{\Cw}
\newcommand{\Fcl}{\Fw}
\newcommand{\Dcl}{\Dw}
\newcommand{\Gcl}{\Gw}
\newcommand{\datacl}{(\Ccl, \Fcl, \Dcl, \Gcl)}
\newcommand{\wHS}{\widetilde{\HS}}
\newcommand{\wC}{\widetilde{C}}
\newcommand{\wD}{\widetilde{D}}
\newcommand{\Ly}{V}
\newcommand{\level}{r}
\newcommand{\rU}{\level^*}
\newcommand{\Ls}{L_\Ly(\level)}
\newcommand{\Lss}{L_\Ly(\rU)}
\newcommand{\Ir}{{\cal I} (\level, \rU)}
\newcommand{\K}{{\cal M}_\level}
\newcommand{\Mr}{{\cal M}_\level}
\newcommand{\Pwc}[1]{\Pi^w_c(#1)}
\newcommand{\Pwd}[1]{\Pi^w_d(#1)}
\newcommand{\Pc}[1]{\Pi_c(#1)}
\newcommand{\Pd}[1]{\Pi_d(#1)}
\newcommand{\wxc}{\Psi^w_c}
\newcommand{\wxuc}{\Phi^w_c}
\newcommand{\wxud}{\Phi^w_d}
\newcommand{\uxc}{\Psi^u_c}
\newcommand{\uxd}{\Psi^u_d}
\newcommand{\Lc}{\Theta_c}
\newcommand{\Ld}{\Theta_d}
\newcommand{\wLc}{\widetilde{\Theta}_c}
\newcommand{\wS}{\widetilde{S}}
\newcommand{\clfpair}{(\Ly, \rU)}
\newcommand{\kc}{\kappa_c}
\newcommand{\kd}{\kappa_d}
\newcommand{\rc}{\rho_c}
\newcommand{\rd}{\rho_d}
\newcommand{\kmc}{\kappa^m_c}
\newcommand{\kmd}{\kappa^m_d}
\newcommand{\Mc}{M_c}
\newcommand{\Md}{M_d}
\newcommand{\kpair}{(\kc, \kd)}
\newcommand{\hmax}{h_{\max}}
\newcommand{\hmin}{h_{\min}}
\newcommand{\vmax}{v_{\max}}
\newcommand{\Emax}{E_{\max}}
\newcommand{\ep}{e_p}
\newcommand{\umax}{u_{\max}}
\newcommand{\NotConf}[1]{\ifthenelse{\boolean{Conference}}{}{#1}}
\newcommand{\IfConf}[2]{\ifthenelse{\boolean{Conference}}{#1}{#2}}   
\begin{document}
	\ititle{Forward Invariance of Sets for \\ Hybrid Dynamical Systems (Part II)}
	\iauthor{
	Jun Chai\\
	{\normalsize jchai3@ucsc.edu}\\
	Ricardo Sanfelice\\
	{\normalsize ricardo@ucsc.edu}}
	\idate{\today{}}
	\iyear{2020}
	\irefnr{0XX}
	\makeititle
	\tableofcontents
	\newpage
	\title{\Large \bf Forward Invariance of Sets for Hybrid Dynamical Systems \\ (Part II)}
	\author{Jun Chai and Ricardo G. Sanfelice}
	\maketitle

\tableofcontents

\newpage
\begin{abstract}
	This article presents tools for the design of control laws inducing robust controlled forward invariance of a set for hybrid dynamical systems modeled as hybrid inclusions.
	A set has the robust controlled forward invariance property via a control law if every solution to the closed-loop system that starts from the set stays within the set for all future time, regardless of the value of the disturbances.
	Building on the first part of this article, which focuses on analysis (Chai and Sanfelice, 2019), in this article, sufficient conditions for generic sets to enjoy such a property are proposed.
	To construct invariance inducing state-feedback laws, the notion of robust control Lyapunov function for forward invariance is defined.
	The proposed synthesis results rely on set-valued maps that include all admissible control inputs that keep closed-loop solutions within the set of interest.
	Results guaranteeing the existence of such state-feedback laws are also presented. Moreover, conditions for the design of continuous state-feedback laws with minimum point-wise norm are provided.
	Major results are illustrated throughout this article in a constrained bouncing ball system and a robotic manipulator application.
\end{abstract}

\section{Introduction}
\label{sec:intro}
\subsection{Background and Motivation}
A set $K$ is forward invariant for a dynamical system if every solution to the system from $K$ stays in $K$.
Forward invariance properties have been key building blocks of stability theory since the early work by LaSalle and Krasovskii in 1960s.
In particular, scholars have studied forward invariance and controlled forward invariance together with stability in the sense of Lyapunov for different classes of dynamical systems.
In \cite{Blanchini.91.JOTA}, the author investigates the relationship between forward invariance and stability for uncertain constrained purely discrete-time and purely continuous-time systems.
In \cite{Bitsoris.Gravalau.95.Automatica}, inspired by stability analysis that uses a comparison principle, the authors derive conditions for the existence of forward invariant sets for constrained discrete-time nonlinear systems.
For a class of discrete-time systems, \cite{Limon.ea.02.CDC} establishes sufficient conditions for stability using invariant set theory, conditions that are applied to derive stability and feasibility of a model-predictive control problem with ``decaying perturbations.''
In \cite{Rodrigues.04.SCL}, stability of controlled invariant sets is achieved for piecewise-affine systems.

In recent years, several control applications have motivated control designs that go beyond Lyapunov stability and attractivity, in particular, that guarantee set invariance and safety properties under disturbances.
In \cite{li2016robust}, as a case study for manipulating genetic regulatory networks, robust invariance of a set is required to keep the states of a boolean network within a desired set.
For continuous-time monotone systems, \cite{Meyer.ea.16.Automatica} achieves energy efficiency in temperature control of ventilation in buildings via invariance analysis.
For nonlinear continuous-time systems, \cite{xu2015robustness} studies invariance applications in adaptive cruise control using control barrier functions.
Applications such as these have motivated our previous work in \cite{PartI-Arxiv}, where we develop systematic tools to verify forward invariance properties of sets without insisting on stability.
In addition, theoretical and computational results on robust controlled forward invariance are available in the literature for particular classes of systems.
Such a property guarantees that every solution to the closed-loop system stay within the set they started from, regardless of the values of the disturbances.
An extensive survey on control design for forward invariance is available in \cite{blanchini1999set}.
In \cite{hu2003composite}, the authors study invariance control for saturated linear continuous-time systems (the singular case is treated in \cite{Lin.Lv.07.TAC}).
Algorithms to estimate the maximal invariant set for discrete-time systems are given in \cite{Kerrigan.Maciejowski.00.CDC, Rakovic.ea.04.CDC, Collins.07.TCS}.
Methods for the design of invariance-based control laws for systems with inputs using control Lyapunov functions are less developed.
By solving convex optimization problems for linear discrete-time systems, \cite{Rakovic.ea.07.Automatica} and \cite{Rakovic.Baric.10.TAC} generate tools to verify and compute robust controlled invariant sets that are parametrized by a family of local control Lyapunov functions.

For systems exhibit switching dynamics, robust forward invariance analysis tools are applied to the design of feedback controllers in \cite{Lin.Antsaklis.02.CDC} for linear continuous-time systems that have a logic variable determining the mode of operation.
In \cite{stiver2001invariant}, methods to design invariance-inducing controllers exhibiting discrete events for continuous-time nonlinear systems are proposed.
The particular case of invariance-based control design for switched systems modeled as discrete-time systems (without perturbations) is treated in \cite{Benzaouia.ea.07.IJC}.
The authors in \cite{Julius.vanderSchaft.02.CDC} and \cite{Shang.04.ACC} propose algorithms to compute the controlled invariant sets for systems.

Invariance-based control for hybrid systems, which are systems that combine continuous and discrete dynamics, is much less explored, with only a few articles on the subject.
For reachability of desired sets, game theory techniques are applied in \cite{lygeros1999controllers} and \cite{Gao.ea.06.HSCC} to render sets controlled invariance for a class of hybrid systems with disturbances.
Similarly, barrier functions (and control barrier functions), which lead to controlled invariant sets, have been effectively employed in the study of safety for classes of hybrid systems \cite{wieland2007constructive}.
Moreover, in \cite{prajna2004safety} and \cite{kong2013exponential}, such functions are used for safety verification in hybrid automata with disturbances.

\ConfSp{-4mm}
\subsection{Contributions}
In \cite{PartI-Arxiv},
we formally define notions pertaining to robust forward invariance of sets for hybrid dynamical systems modeled as hybrid inclusions \cite{65}.
Sufficient conditions that apply to generic sets are presented therein.
In addition, we establish conditions to render the sublevel sets of Lyapunov-like functions forward invariant for hybrid systems without disturbances.
In this paper, continuing from \cite{PartI-Arxiv}, we focus on design of controllers that confer invariance properties presented therein, for hybrid systems given as in \cite{135}.
In particular, differential and difference inclusions with state, input, and disturbance constraints are used to model the continuous and discrete dynamics of hybrid systems, respectively.
More precisely, we consider hybrid systems with control inputs $u = (u_c, u_d) \in \Uc \times \Ud$ and  disturbances $w = (w_c,w_d) \in \Wc \times \Wd$ that are given by\footnote{The space for control inputs and disturbances are $\Uc \subset \reals^{m_c}, \Ud \subset \reals^{m_d}$ and $\Wc \subset \reals^{d_c}, \Wd \subset \reals^{d_d}$, respectively.} 
\begin{align}\label{eq:Huw}
	\Huw  \begin{cases}
		(x, u_c, w_c) \in \Cuw  & \dot{x} \ \ \in \Fuw(x, u_c, w_c)\\
		(x, u_d, w_d) \in \Duw  & x^+ \in \Guw(x, u_d, w_d),
	\end{cases}
\end{align}
where $x \in \reals^n$ is the state, $\Cuw \subset \reals^n \times \Uc \times \Wc$ and $\Duw  \subset \reals^n \times \Ud \times \Wd$ are called the flow and jump set, respectively, while $\Fuw$ and $\Guw$ are called the flow and jump map, respectively.
For this broad class of hybrid systems, the contributions made by this paper include:
\begin{enumerate}[leftmargin = \ItemSp]
	\item {\it Robust controlled forward invariance for $\Huw$ via $\kpair$}:
	we introduce the concept of robust controlled forward invariance.
	When a $\Huw$-admissible\footnote{A state-feedback pair $\kpair$, where $\kc : \reals^{n} \rightarrow \reals^{m_c}$ and $\kd : \reals^{n} \rightarrow \reals^{m_d}$, is said to be $\Huw$-admissible if the pair satisfies the dynamics of $\Huw$.} state-feedback pair $\kpair$ renders a set $K \subset \reals^n$
	robustly controlled forward invariant for the closed-loop system, the existence of a nontrivial solution pair
	from every possible initial condition is guaranteed.
	Moreover, every maximal solution pair (see Definition~\ref{def:solution}) that starts from the set is complete and stays within the set for all future (hybrid) time.
\NotConf{
	we provide sufficient conditions for verifying robust controlled forward invariance of a generic set $K \subset \reals^n$ for $\Huw$ via given feedback laws.
	Such a property holds for state component of each solution pair of the closed-loop hybrid system resulting from $\Huw$ being controlled by a $\Huw$-admissible state-feedback pair $\kpair$, which is given by
	\begin{align}\label{eq:Hcl}
		\Hcl \begin{cases}
			(x, w_c) \in \Ccl & \dot{x} \ \in \Fcl(x, w_c)\\
			(x, w_d) \in \Dcl & x^+ \! \in \Gcl(x, w_d),
		\end{cases}
	\end{align}
	where the set-valued maps $\Fcl(x,w_c) := \Fuw(x, \kc(x), w_c)$ and $\Gcl(x,w_d) := \Guw(x, \kd(x),$ $w_d)$ govern the continuous-time and discrete-time evolutions of the system on the sets
	$\Ccl  := \{(x, w_c) \in \reals^n \times \Wc: (x, \kc(x), w_c) \in \Cuw\}$
	and $\Dcl := \{(x, w_d) \in \reals^n \times \Wd: (x, \kd(x), w_d) \in \Duw\},$ respectively.
	Note that $\Hcl$ shares similar structure as the hybrid system $\Hw$ in (1); see \cite{PartI-Arxiv}.
	Applying results in \cite{PartI-Arxiv},
	we propose sufficient conditions guaranteeing that a feedback pair induces a set $K$ robust controlled forward invariance for $\Huw$.
	The challenges in deriving these results include:
	\begin{itemize}
		\item The possible set-valuedness of $\Fcl$ and $\Gcl$ and the nonunique solution pairs caused by existence of states and disturbances from where flows and jumps are both allowed (namely, the state component of $\Ccl$ and $\Dcl$ may overlap);
		\item The set $K$ ought to enjoy forward invariance properties over all possible disturbances for hybrid dynamical systems.
		More precisely, when flows occur, the pair $(x, w_c)$ belongs to $\Ccl$ and elements of $\Fcl(x, w_c)$ are required to point tangentially or inward the set $K$ regardless of the values for $w_c$.
		Similarly, when jumps occur, the pair $(x, w_d)$ belongs to $\Dcl$ and $\Gcl(x, w_d)$ has to map to points in $K$ regardless of the values of $w_d$.
	\end{itemize}
}
	\item {\it Robust forward invariance of sublevel sets of Lyapunov-like functions}:
	conditions to guarantee robust forward invariance properties that take advantage of the nonincreasing property of a Lyapunov-like function, $\Ly$, are proposed.
	As in \cite{PartI-Arxiv},
	we intersect the sublevel sets of the given function $\Ly$ with the state component of the flow and jump sets
	to define the set to be rendered robustly controlled forward invariant.
	Technical conditions are needed to guarantee the existence of nontrivial solution pairs from every point in such a set as well as to guarantee completeness of solution pairs.
	Note that these Lyapunov-like functions ought to satisfy inequalities over carefully constructed regions that allow for the potential increase in $\Ly$ in the interior of their sublevel sets.
	Moreover, compared to \cite[Theorem 5.1]{PartI-Arxiv},
	we further relax the regularity on the flow set via a constructive proof that employs properties of vectors in the tangent cone of the sets.
	\item {\it Existence of continuous state-feedback laws using robust control Lyapunov functions for forward invariance (RCLF for forward invariance)}:
	we present the concept of robust control Lyapunov function for forward invariance for the purpose of rendering a set robust controlled invariant.
	The proposed notion extends and is derived from the conditions in \cite{135} for asymptotic stability.
	Such a novel concept is exploited to determine sufficient conditions that lead to the existence of continuous state-feedback laws for robust controlled invariance.
	These conditions involve the data of the system and properly constructed set-valued maps in terms of $\Ly$--called the regulation maps.
	In particular, by assuring the existence of continuous selections from the said set-valued maps, forward invariance of sublevel sets of $\Ly$ is guaranteed.
	\item {\it Pointwise minimum norm selections as continuous state-feedback laws}:
	utilizing the regulation maps, we propose a pointwise minimum norm selection scheme to construct state-feedback laws that not only render the set robustly controlled forward invariant, but also are continuous.
\end{enumerate}

In summary, in this paper, we propose control synthesis methods for the purpose of rendering a set robustly controlled forward invariant for a general class of hybrid dynamical systems with disturbances.\footnote{The nominal version of the results in this paper appeared without proof in the conference article \cite{141} with a slightly different definition of the CLF for forward invariance.}
Major results are illustrated in two control design applications in which the dynamical systems can be modeled as hybrid inclusions as in \eqref{eq:Huw}.
More precisely, the results are illustrated in
	\begin{enumerate}
		\item {\it a constrained bouncing ball system,} for which the control goal is to maintain the ball to bounce back within a desired height range under the effect of an uncertain coefficient of restitution, and
		\item {\it a robotic manipulator interacting with an environment,} 
		for which the control goal is to guarantee that the end-effector only operates within a safe region.
	\end{enumerate}
For both applications, the designed state-feedback controllers induce robust forward invariance of sets describing the corresponding control objectives.
These applications are revisited multiple times to illustrate definitions, concepts and results.

Our results are also insightful for systems with purely continuous-time or discrete-time dynamics.
In fact, because of the generality of the hybrid inclusions framework, the results in this paper are applicable to broader classes of systems, such as those studied in \cite{fernandes1987remarks, blanchini1999set, bitsoris1988positive, hu2003composite}.

\ConfSp{-4mm}
\subsection{Organization and Notation}
The remainder of the paper is organized as follows.
Preliminaries about the considered class of hybrid systems is in Section~\ref{sec:HS}.
The robust controlled forward invariance notions and sufficient conditions to guarantee each notion are presented in Section~\ref{sec:rcFI}.
In Section~\ref{sec:rcFI-Lya}, sufficient conditions to induce robust forward invariance of sets are proposed for systems with a given Lyapunov-like function.
In Section~\ref{sec:rclf-FI}, the results on the existence of continuous state-feedback laws for robust controlled forward invariance are presented.
The pointwise minimum control law is in Section~\ref{sec:rselect}.

\medskip
\noindent{\bf Notation:}
Given a set-valued map $M : \reals^m \rightrightarrows \reals^n$, we denote the range of $M$ as $\rge M = \{ y \in \reals^n: y \in M(x), x \in \reals^m \}$, the domain of $M$ as $\dom M = \{ x \in \reals^m : M(x) \neq \emptyset \}$, and the graph of $M$ as $\gph M = \{ (x,y)\in \reals^m \times \reals^n : y \in M(x)\}$.
Given $\level \in \reals$, the $\level$-sublevel set of a function $\Ly: \reals^n \rightarrow \reals$ is $\Ls: = \{x \in \reals^n: \Ly(x) \leq \level \}$, $\Ly^{-1}(\level) = \{x \in \reals^n: \Ly(x) = \level \}$ denotes the $\level$-level set of $\Ly$, and, following the same notation in \cite[Section V]{PartI-Arxiv}, 
given a constant $\level \leq \rU$, we define the set $\Ir : = \{x \in \reals^n : \level \leq \Ly(x) \leq \rU \}$.
The closed unit ball around the origin in $\reals^n$ is denoted as $\ball$.
Given a closed set $K$, we denote the tangent cone of the set $K$ at a point $x \in K$ as $T_K(x)$.
The closure of the set $K$ is denoted as $\overline{K}$.
The set collecting all boundary points of a set $K$ is denoted by $\partial K$ and the set of interior points of $K$ is denoted by $\inter K$.
Given vectors $x$ and $y$, $(x, y)$ is equivalent to $[x^\top \ y^\top]^\top$.
Given a vector $x, |x|$ denotes the 2-norm of $x$.

\section{Preliminaries}
\label{sec:HS}
In this paper, we are interested in forward invariance properties of a set that are uniform in the disturbances $w$ for the closed-loop system $\Hcl$ in \eqref{eq:Hcl} resulting from controlling $\Huw$ in \eqref{eq:Huw} by a $\Huw$-admissible state-feedback pair $\kpair$.
Note that some properties and notions in this paper are clearly defined for the original (open-loop) hybrid system $\Huw$ with control inputs, while others are developed for the (perturbed) closed-loop system $\Hcl$.
In \eqref{eq:Huw}, sets $\Cuw$ and $\Duw$ define conditions that $x, u,$ and $w$ should satisfy for flows or jumps to occur, respectively.
The maps $\Fuw$ and $\Guw$ capture the system dynamics when in sets $\Cuw$ and $\Duw$, respectively.
For ease of exposition, for every $\star \in \{c,d\}$,
we define the projection of $S \subset \reals^n \times {\cal W}_\star$ onto $\reals^n$ as
$$\Pi^w_\star(S):= \{x \in \reals^n: (x, w_\star) \in S\},$$
and the projection of $S \subset \reals^n \times {\cal U}_\star \times {\cal W}_\star$ onto $\reals^n$ as
$$\Pi_\star(S):= \{x \in \reals^n: (x, u_c, w_c) \in S\}.$$
Given sets $\Cuw$ and $\Duw$, the set-valued maps $\wxuc: \reals^n \times {\cal U}_c \rightrightarrows \Wc$ and $\wxud: \reals^n \times {\cal U}_d \rightrightarrows \Wd$ are defined as
\begin{align}
	\begin{split}
		&\wxuc(x,u_c) := \{w_c \in \reals^{d_c} : (x, u_c, w_c) \in \Cuw\}, \\
		&\wxud(x,u_d) := \{w_d \in \reals^{d_d} : (x, u_d, w_d) \in \Duw\},
	\end{split}\label{eq:wxu}
\end{align}
for each $(x,u_c) \in \reals^n\times{\cal U}_c$ and each $(x,u_d) \in \reals^n\times{\cal U}_d$, respectively,
and the set-valued maps $\uxc: \reals^n \rightrightarrows \Uc$ and $\uxd: \reals^n \rightrightarrows \Ud$ are defined, for each $x \in \reals^n$, as
	$$\uxc(x) := \{u_c \in \reals^{m_c}: (x, u_c, w_c) \in \Cuw\},$$
	$$\uxd(x) := \{u_d \in \reals^{m_d}: (x, u_d, w_d) \in \Duw\},$$
respectively.

Solutions to a hybrid system $\Hcl$ as in \eqref{eq:Hcl} are parameterized by hybrid time domains $\cal E$, which are subsets of $\reals_{\geq 0} \times\nats$ that, for each $(T,J) \in {\cal E}, {\cal E} \cap ([0,T] \times \{0,1,...,J\})$ can be written as $\bigcup\limits^{J-1}_{j= 0} ([t_j,t_{j+1}],j)$ for some finite sequence of times $0 = t_0 \leq t_1 \leq t_2 ... \leq t_J.$
Moreover, following \cite[Definition 2.4]{65}, a hybrid arc $\phi$ is a function on a hybrid time domain that, for each $j \in \nats, t  \mapsto \phi(t, j)$ is absolutely continuous on the interval $I^j: = \{t : (t, j) \in \dom \phi\}$, where $\dom \phi$ denotes the hybrid time domain of $\phi$.

To make this paper self contained, we recall the solution pair concept in \cite[Definition 2.1]{PartI-Arxiv}.
\begin{definition}(solution pairs to $\Hcl$)\label{def:solution}
	A pair $(\phi, w)$ consisting of a hybrid arc $\phi$ and a hybrid disturbance $w = (w_c, w_d)$, with $\dom\phi = \dom w (= \dom (\phi, w)),$\footnote{Recall from \cite{PartI-Arxiv}, a hybrid disturbance $w$ is a function on a hybrid time domain that, for each $j \in \nats, t \mapsto w(t,j)$ is Lebesgue measurable and locally essentially bounded on the interval $\{t : (t,j) \in \dom w\}$.} is a solution pair to the hybrid system $\Hcl$ in \eqref{eq:Hcl} if $(\phi(0,0), w_c(0,0)) \in \overline{\Ccl}$ or $(\phi(0,0), w_d(0,0))\in \Dcl$, and
	\begin{enumerate}[label = (S\arabic*$\w$), leftmargin = \ItemLong]
		\item\label{item:Sw1} for all $j \in \nats$ such that $I^j$ has nonempty interior
		\begin{align*}
			&(\phi(t,j), w_c(t,j)) \in \Ccl &\mbox{for all } t \in \text{\normalfont int }I^j,\\
			&\frac{d\phi}{dt}(t,j) \in \Fcl(\phi(t,j), w_c(t,j)) &\mbox{for almost all } t \in I^j,
		\end{align*}
		\item\label{item:Sw2}  for all $(t,j) \in \dom\phi$ such that $(t, j+1) \in \dom\phi$,
		\begin{align*}
			&(\phi(t,j), w_d(t,j)) \in \Dcl\\
			&\phi(t,j+1) \in \Gcl(\phi(t,j), w_d(t,j)).
		\end{align*}
	\end{enumerate}
	In addition, a solution pair $(\phi, w)$ to $\Hcl$ is
	\begin{itemize}[leftmargin = \ItemSp]
		\item nontrivial if $\dom (\phi,w)$ contains at least two points;
		\item complete if $\dom (\phi,w)$ is unbounded;
		\item maximal if there does not exist another $(\phi, w)'$ such that $(\phi, w)$ is a truncation of $(\phi, w)'$ to some proper subset of $\dom(\phi, w)'$.
		\hfill $\square$
	\end{itemize}
\end{definition}
\IfConf{
	Given $K \subset \reals^n$,  $\sol_{\Hcl}(K)$ denotes the set that includes all maximal solution pairs $(\phi, w)$ to the hybrid system $\Hcl$ with $\phi(0,0) \in K$.
}{
	We use $\sol_{\Hcl}$ to represent the set of all maximal solution pairs to the hybrid system $\Hcl$ and, given $K \subset \reals^n$,  $\sol_{\Hcl}(K)$ denotes the set that includes all maximal solution pairs $(\phi, w)$ to the hybrid system $\Hcl$ with $\phi(0,0) \in K$.
	
Next, we list \cite[Proposition 3.4]{PartI-Arxiv} as below, which presents conditions guaranteeing existence of nontrivial solution pairs to $\Hcl$ from every initial state $\xi \in \overline{\Pwc{\Ccl}}\cup \Pwd{\Dcl}$.
This result is used in later sections to characterize all possibilities for maximal solution pairs to $\Hcl$.
\begin{proposition}(basic existence under disturbances)\label{prop:wexistence}
	Consider a hybrid system $\Hcl = \datacl$ as in \eqref{eq:Hcl}.
	Let $\xi \in \overline{\Pwc{\Ccl}} \cup \Pwd{\Dcl}$.
	If $\xi \in \Pwd{\Dcl}$, or
	\begin{enumerate}[label = (VC$\w$), leftmargin=3\parindent]
		\item\label{item:VCw} there exist $ \varepsilon > 0$, an absolutely continuous function $\widetilde{z}: [0,\varepsilon] \rightarrow \reals^n$ with $\widetilde{z}(0) = \xi$, and a Lebesgue measurable and locally essentially bounded function $\widetilde{w}_c : [0,\varepsilon] \rightarrow \Wc$ such that $(\widetilde{z}(t), \widetilde{w}_c(t))\in \Ccl$ for all $t \in (0,\varepsilon)$ and $\dot{\widetilde{z}}(t) \in \Fcl(\widetilde{z}(t), \widetilde{w}_c(t))$ for almost all $t \in [0,\varepsilon]$, where $\widetilde{w}_c(t) \in \wxc(\widetilde{z}(t))$ for every $t \in [0,\varepsilon]$,
	\end{enumerate}
	then, there exists a nontrivial solution pair $(\phi, w)$ from the initial state $\phi(0,0) = \xi$.
	If $\xi \in \Pwd{\Dcl}$ and \ref{item:VCw} holds for every $\xi \in \overline{\Pwc{\Ccl}} \setminus \Pwd{\Dcl}$, then there exists a nontrivial solution pair to $\Hcl$ from every initial state $\xi \in \overline{\Pwc{\Ccl}} \cup \Pwd{\Dcl}$, and every solution pair $(\phi, w) \in \sol_{\Hcl}$ from such points satisfies exactly one of the following:
	\begin{enumerate}[label = \alph*), leftmargin = \ItemSp]
		\item\label{item:a} the solution pair $(\phi,w)$ is complete;
		\item\label{item:b} $(\phi,w)$ is not complete and ``ends with flow'': with $(T,J) = \sup\dom(\phi, w)$, the interval $I^J$ has nonempty interior, and either
		\begin{enumerate}[label = b.\arabic*)]
			\item\label{item:b.1} $I^J$ is closed, in which case either
			\begin{enumerate}[label = b.1.\arabic*)]
				\item\label{item:b.1.1} $\phi(T,J) \in \overline{\Pwc{\Ccl}}\setminus(\Pwc{\Ccl} \cup \Pwd{\Dcl})$, or
				\item\label{item:b.1.2} from $\phi(T,J)$ flow within $\Pwc{\Ccl}$ is not possible, meaning that there is no $\varepsilon > 0$, absolutely continuous function $\widetilde{z}: [0,\varepsilon] \rightarrow \reals^n$ and a Lebesgue measurable and locally essentially bounded function $\widetilde{w}_c : [0,\varepsilon] \rightarrow \Wc$ such that $\widetilde{z}(0) = \phi(T,J)$, $(\widetilde{z}(t), \widetilde{w}_c(t))\in \Ccl$ for all $t \in (0,\varepsilon)$, and $\dot{\widetilde{z}}(t) \in \Fcl(z(t), \widetilde{w}_c(t))$ for almost all $t \in [0,\varepsilon]$, where $\widetilde{w}_c(t) \in \wxc(\widetilde{z}(t))$ for every $t \in [0,\varepsilon]$, or
			\end{enumerate}
			\item\label{item:b.2} $I^J$ is open to the right, in which case $(T,J) \notin \dom(\phi, w)$ due to the lack of existence of an absolutely continuous function $\widetilde{z} : \overline{I^J} \rightarrow \reals^n$ and a Lebesgue measurable and locally essentially bounded function $\widetilde{w}_c : [0,\varepsilon] \rightarrow \Wc$ satisfying $(\widetilde{z}(t), \widetilde{w}_c(t)) \in \Ccl$ for all $t \in \inter I^J$, $\dot{\widetilde{z}}(t) \in  \Fcl(\widetilde{z}(t), \widetilde{w}_c(t))$ for almost all $t \in I^J,$ and such that $\widetilde{z}(t) = \phi(t,J)$ for all $t \in I^J $, where $\widetilde{w}_c(t) \in \wxc(\widetilde{z}(t))$ for every $t \in [0,\varepsilon]$;
		\end{enumerate}
		\item\label{item:c} $(\phi, w)$ is not complete and ``ends with jump'': with $(T,J) = \sup\dom(\phi, w) \in \dom(\phi, w)$, $(T,J-1) \in \dom (\phi, w)$, and either\footnote{As a consequence of $(\phi, w)$ ending with a jump, which implies that $\phi(T,J) \notin \Pwd{\Dcl}$, $\phi(T,J) \in \overline{\Pwc{\Ccl}}\setminus \Pwd{\Dcl}$ is under the condition in case \ref{item:c.2}.}
		\begin{enumerate}[label = c.\arabic*)] 
			\item\label{item:c.1} $\phi(T,J) \notin \overline{\Pwc{\Ccl}} \cup \Pwd{\Dcl}$, or
			\item\label{item:c.2} $\phi(T,J) \in \overline{\Pwc{\Ccl}}\setminus \Pwd{\Dcl}$, and from $\phi(T,J)$ flow within $\Pwc{\Ccl}$ as defined in \ref{item:b.1.2} is not possible.
		\end{enumerate}
	\end{enumerate}
\end{proposition}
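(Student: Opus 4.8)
The plan is to follow the standard existence-and-characterization argument for hybrid inclusions (as in \cite{65}), adapted to accommodate the hybrid disturbance $w$ and the possible overlap between the state components of $\Ccl$ and $\Dcl$. I would split the argument into two parts: first, the construction of a nontrivial solution pair from a single initial state $\xi$; second, the exhaustive and mutually exclusive case analysis for maximal solution pairs once the viability hypothesis holds globally.

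For the first part, I would reason according to which disjunct of the hypothesis holds. If $\xi \in \Pwd{\Dcl}$, then by definition of the projection there is $w_d$ with $(\xi, w_d) \in \Dcl$, and selecting $\phi(0,1) \in \Gcl(\xi, w_d)$ (nonempty-valued there under the standing data assumptions) produces a pair with $\dom(\phi,w) = \{(0,0),(0,1)\}$, which is a nontrivial solution pair. If instead \ref{item:VCw} holds, the absolutely continuous arc $\widetilde{z}$ and the measurable, locally essentially bounded disturbance $\widetilde{w}_c$ that it supplies directly define a nontrivial solution pair flowing on $[0,\varepsilon]$, with $\phi(t,0) = \widetilde{z}(t)$ and $w_c(t,0) = \widetilde{w}_c(t)$. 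This establishes the first claim, and under the global hypothesis, applying this dichotomy pointwise yields a nontrivial solution pair from every $\xi \in \overline{\Pwc{\Ccl}} \cup \Pwd{\Dcl}$.

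For the second part, let $(\phi,w) \in \sol_{\Hcl}$ be maximal. If $\dom(\phi,w)$ is unbounded, case \ref{item:a} holds. Otherwise $\dom(\phi,w)$ is bounded, and I would set $(T,J) = \sup\dom(\phi,w)$ and examine the terminal interval $I^J$. The decisive dichotomy is whether the solution \emph{ends with flow} ($I^J$ has nonempty interior) or \emph{ends with a jump} ($(T,J)$ is reached from $(T,J-1)$). In the flow case I would split on whether $I^J$ is closed or open to the right: when closed, maximality rules out any extension, so either $\phi(T,J)$ has left $\Pwc{\Ccl}\cup\Pwd{\Dcl}$ while staying in its closure (case \ref{item:b.1.1}) or flow from $\phi(T,J)$ is obstructed in the precise sense of \ref{item:b.1.2}; when open to the right, the obstruction to adjoining the endpoint is recorded as \ref{item:b.2}. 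In the jump case, maximality forces $\phi(T,J)\notin\Pwd{\Dcl}$, and then either $\phi(T,J)$ lies outside $\overline{\Pwc{\Ccl}}\cup\Pwd{\Dcl}$ (case \ref{item:c.1}) or it lies in $\overline{\Pwc{\Ccl}}\setminus\Pwd{\Dcl}$ with flow obstructed (case \ref{item:c.2}). Mutual exclusivity is immediate, since the three groups are separated by the terminal behavior together with the location of $\phi(T,J)$.

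I expect the main obstacle to be the open-interval case \ref{item:b.2}, where the solution approaches $(T,J)$ without attaining it---covering finite-escape behavior and boundary approaches along which no admissible disturbance selection survives to the limit. Making each ``flow is not possible'' statement precise while keeping the witnessing disturbance $\widetilde{w}_c$ measurable, locally essentially bounded, and valued in $\wxc(\widetilde{z}(t))$ throughout is the technical crux; here I would lean on the viability-tube construction underlying \ref{item:VCw} together with the measurable-selection machinery from \cite{65}, so that whenever flow is geometrically possible a genuine solution-pair extension exists, contradicting maximality.
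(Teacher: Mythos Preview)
The paper does not actually prove this proposition: it is explicitly recalled from the companion article, introduced with ``Next, we list \cite[Proposition 3.4]{PartI-Arxiv} as below,'' and no proof appears anywhere in the present paper. So there is no in-paper argument to compare your proposal against.

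That said, your sketch is the expected one. The result is the disturbance-parametrized analogue of the basic existence proposition for hybrid inclusions in \cite{65}, and your two-part plan---construct a nontrivial pair from $\xi$ by either jumping (when $\xi\in\Pwd{\Dcl}$) or using the arc supplied by \ref{item:VCw}, then exhaust the ways a bounded maximal solution pair can terminate---matches how \cite[Proposition 3.4]{PartI-Arxiv} is proved and how the undisturbed version is handled in \cite[Proposition 6.10]{65}. One small caution: in the jump case you invoke nonemptiness of $\Gcl(\xi,w_d)$ ``under the standing data assumptions,'' but the proposition as stated imposes no hybrid basic conditions on $\Hcl$; the correct justification is simply that $(\phi,w)$ with $\dom(\phi,w)=\{(0,0)\}$ is already a solution pair, and nontriviality is obtained either by the jump (if $\Gcl(\xi,w_d)\neq\emptyset$) or, failing that, the proposition's hypothesis is still satisfied only when \ref{item:VCw} also holds. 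In practice the companion paper assumes enough regularity that $\Gcl$ is nonempty on $\Dcl$, so your shortcut is harmless, but it is worth being precise about what is assumed.
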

}

The following regularity conditions on the system data of a hybrid system $\Hcl$ as in \eqref{eq:Hcl} are considered in some forthcoming results.
These conditions guarantee robustness of asymptotic stability of compact sets with respect to small perturbations; see \cite[Chapter 6]{65} for details.
\begin{definition}(hybrid basic conditions)\label{def:hbc}
	A hybrid system $\Hcl= \datacl$ is said to satisfy the hybrid basic conditions if its data satisfies
	\begin{enumerate}[label = (A\arabic*$_w$), leftmargin=\ItemLong]
		\item\label{item:A1} $\Ccl$ and $\Dcl$ are closed subsets of $\reals^n \times\Wc$ and $\reals^n \times \Wd$ respectively;
		\item\label{item:A2} $\Fcl: \reals^n \times \reals^{d_c} \rightrightarrows \reals^n$ is outer semicontinuous\footnote{See Definition~\ref{def:osc} in Appendix.} relative to $\Ccl$ and locally bounded, and for all $(x,w_c) \in \Ccl, \Fcl(x,w_c)$ is nonempty and convex;
		\item\label{item:A3} $\Gcl: \reals^n \times \reals^{d_d} \rightrightarrows \reals^n$ is outer semicontinuous relative to $\Dcl$ and locally bounded, and for all $(x,w_d) \in \Dcl, \Gcl(x,w_d)$ is nonempty. \hfill $\square$
	\end{enumerate}
\end{definition}

To obtain properties \ref{item:A1}-\ref{item:A3} in Definition~\ref{def:hbc} for $\Hcl$, we have the following immediate result.
\begin{lemma}(hybrid basic conditions)\label{lem:hbc}
	Suppose $\kc:\Pc{\Cuw} \rightarrow \Uc$ and $\kd:\Pd{\Duw} \rightarrow \Ud$ are continuous and $\Huw = \datauw$ is such that
	\begin{enumerate}[label = (A\arabic*'), leftmargin=\ItemLong]
		\item\label{item:A1p} $\Cuw$ and $\Duw$ are closed subsets of $\dc$ and $\dd$, respectively;
		\item\label{item:A2p} $\Fuw : \reals^n \times \reals^{m_c} \times \reals^{d_c} \rightrightarrows \reals^n$ is outer semicontinuous relative to $\Cuw$ and locally bounded, and for every $(x, u_c, w_c) \in \Cuw, \Fuw (x, u_c, w_c)$ is nonempty and convex;
		\item\label{item:A3p} $\Guw : \reals^n \times \reals^{m_d} \times \reals^{d_d} \rightrightarrows \reals^n$ is outer semicontinuous relative to $\Duw$ and locally bounded, and for every $(x, u_d, w_d) \in \Duw, \Guw (x, u_d, w_d)$ is nonempty.
	\end{enumerate}
	Then, $\Hcl$ satisfies conditions \ref{item:A1}-\ref{item:A3} in Definition~\ref{def:hbc}.
\end{lemma}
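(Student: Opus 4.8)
The plan is to transfer each of the three properties in Definition~\ref{def:hbc} from the open-loop data $\datauw$ to the closed-loop data $\datacl$ by exploiting that the substitutions $u_c = \kc(x)$ and $u_d = \kd(x)$ are performed through \emph{continuous} maps. To organize the argument I would introduce the continuous auxiliary maps $\Lambda_c(x,w_c) := (x,\kc(x),w_c)$ on $\Pc{\Cuw}\times\Wc$ and $\Lambda_d(x,w_d) := (x,\kd(x),w_d)$ on $\Pd{\Duw}\times\Wd$, so that by construction $\Ccl = \{(x,w_c) : \Lambda_c(x,w_c)\in\Cuw\}$, $\Dcl = \{(x,w_d) : \Lambda_d(x,w_d)\in\Duw\}$, $\Fcl = \Fuw\circ\Lambda_c$, and $\Gcl = \Guw\circ\Lambda_d$. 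Each target property \ref{item:A1}--\ref{item:A3} then reduces to the corresponding hypothesis \ref{item:A1p}--\ref{item:A3p} composed with the continuity of $\Lambda_c$ and $\Lambda_d$.

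For \ref{item:A1}, I would establish closedness of $\Ccl$ by a sequential argument: given $(x_i,w_{c,i})\in\Ccl$ with $(x_i,w_{c,i})\to(x,w_c)$, each $(x_i,\kc(x_i),w_{c,i})\in\Cuw$; continuity of $\kc$ gives $\kc(x_i)\to\kc(x)$ provided the limit state lies in $\dom\kc=\Pc{\Cuw}$, whence $(x,\kc(x),w_c)\in\Cuw$ by closedness of $\Cuw$ from \ref{item:A1p}, i.e. $(x,w_c)\in\Ccl$; the same reasoning with $\Lambda_d$ and \ref{item:A1p} closes $\Dcl$. The one point deserving care—and where I expect the only real difficulty—is verifying that the limit state belongs to $\Pc{\Cuw}$ so that $\kc(x)$ is defined. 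I would obtain this by extracting a convergent subsequence of $\{(x_i,\kc(x_i),w_{c,i})\}$ in the closed set $\Cuw$, whose limit projects to $x$ and thereby forces $x\in\Pc{\Cuw}$, after which continuity of $\kc$ identifies the limiting input with $\kc(x)$.

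For \ref{item:A2}, the pointwise properties are immediate: for $(x,w_c)\in\Ccl$ we have $(x,\kc(x),w_c)\in\Cuw$, so $\Fcl(x,w_c)=\Fuw(x,\kc(x),w_c)$ is nonempty and convex by \ref{item:A2p}. Local boundedness of $\Fcl$ follows because $\kc$, being continuous, maps a bounded neighborhood of any $(x,w_c)$ into a bounded set, so $\Lambda_c$ carries that neighborhood into a bounded subset of $\dc$ on which $\Fuw$ is locally bounded by \ref{item:A2p}. For outer semicontinuity relative to $\Ccl$, I would take $(x_i,w_{c,i})\in\Ccl$ with $(x_i,w_{c,i})\to(x,w_c)\in\Ccl$ and $v_i\in\Fcl(x_i,w_{c,i})$ with $v_i\to v$; continuity of $\kc$ gives $\Lambda_c(x_i,w_{c,i})\to\Lambda_c(x,w_c)$ within $\Cuw$, and the outer semicontinuity of $\Fuw$ relative to $\Cuw$ in \ref{item:A2p} yields $v\in\Fuw(x,\kc(x),w_c)=\Fcl(x,w_c)$, as required by Definition~\ref{def:osc}.

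Condition \ref{item:A3} is handled identically to \ref{item:A2}, using $\Lambda_d$, $\Gcl=\Guw\circ\Lambda_d$, and hypothesis \ref{item:A3p}, simply dropping the convexity clause. Altogether the only genuine obstacle is the domain/closedness bookkeeping for \ref{item:A1} flagged above; the remaining steps are routine consequences of composing the regularity of $\datauw$ with the continuity of the feedback pair $\kpair$, which is why the result is stated as immediate.
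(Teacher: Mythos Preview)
The paper does not actually supply a proof of this lemma; it is announced as an ``immediate result'' and left unproved. Your approach via the continuous substitution maps $\Lambda_c,\Lambda_d$ is exactly the natural argument one would write out to justify that claim, and the treatment of \ref{item:A2} and \ref{item:A3} (nonemptiness/convexity pointwise, local boundedness via continuity of $\Lambda_\star$, outer semicontinuity by pushing sequences through $\Lambda_\star$ and invoking Definition~\ref{def:osc}) is clean and correct.

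There is, however, a genuine gap in your handling of \ref{item:A1}, precisely at the point you flagged. Your plan is to extract a convergent subsequence of $\{(x_i,\kc(x_i),w_{c,i})\}$ and use closedness of $\Cuw$; but that step requires $\{\kc(x_i)\}$ to be bounded, and nothing in the hypotheses guarantees this. Continuity of $\kc$ on $\Pc{\Cuw}$ gives you nothing if the limit $x$ lies outside $\Pc{\Cuw}$, and the projection of a closed set need not be closed. Concretely, with $n=m_c=d_c=1$, take $\Cuw=\{(x,u,0):x>0,\;xu=1\}$, which is closed, and $\kc(x)=1/x$, which is continuous on $\Pc{\Cuw}=(0,\infty)$; then $\Ccl=(0,\infty)\times\{0\}$ is not closed. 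So without an additional standing assumption (e.g.\ compactness of $\Uc,\Ud$, closedness of $\Pc{\Cuw},\Pd{\Duw}$, or that $\kc,\kd$ extend continuously to the closures), the closedness of $\Ccl,\Dcl$ cannot be obtained by the subsequence argument you propose. This is a subtlety the paper itself glosses over by omitting the proof; your write-up would be strengthened by stating explicitly which extra hypothesis you are invoking at that step.
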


\section{Robust Controlled Forward Invariance for Hybrid Systems}
\label{sec:rcFI}
In this section, we first provide conditions guaranteeing that a static state-feedback pair renders robustly forward invariant (in the appropriate sense) a set for the closed-loop system.
These conditions involve the $\Huw$-admissible state-feedback pair $\kpair$, the data of the closed-loop system it leads to, which is denoted $\Hcl$, and the set $K$ to render robustly forward invariant.
We also provide conditions guaranteeing the existence of such feedbacks as well as a method for their systematic design.

Provided with a $\Huw$-admissible state-feedback pair $\kpair$ the closed-loop hybrid system resulting from $\Huw$ in \eqref{eq:Huw} is given by
	\begin{align}\label{eq:Hcl}
	\Hcl \begin{cases}
	(x, w_c) \in \Ccl & \dot{x} \ \in \Fcl(x, w_c)\\
	(x, w_d) \in \Dcl & x^+ \! \in \Gcl(x, w_d),
	\end{cases}
	\end{align}
	where the set-valued maps $\Fcl(x,w_c) := \Fuw(x, \kc(x), w_c)$ and $\Gcl(x,w_d) := \Guw(x, \kd(x), w_d)$ govern the continuous and discrete dynamics of the system on the sets
	$\Ccl  := \{(x, w_c) \in \reals^n \times \Wc: (x, \kc(x), w_c) \in \Cuw\}$
	and $\Dcl := \{(x, w_d) \in \reals^n \times \Wd: (x, \kd(x), w_d) \in \Duw\},$ respectively.
	Note that $\Hcl$ shares similar structure as the hybrid system $\Hw$ in (1) of \cite{PartI-Arxiv}.
	To this end, to make the paper self contained, we recall the following notions from \cite[Definition 3.2]{PartI-Arxiv} which are used in this section.
\begin{definition}(robust forward (pre-)invariance of $\Hcl$)\label{def:rFI}
	The set $K \subset \reals^n$ is said to be {\em robustly forward pre-invariant for $\Hcl$} if every $(\phi, w)\in \sol_{\Hcl}(K)$ is such that $\rge \phi \subset K$.
	The set $K \subset \reals^n$ is said to be {\em robustly forward invariant for $\Hw$} if for every $x \in K$ there exists a solution pair to $\Hcl$ and every $(\phi, w)\in \sol_{\Hcl}(K)$ is complete and such that $\rge \phi \subset K$.
	\hfill $\square$
\end{definition}

Building from this definition, we introduce the following robust controlled forward invariance notions.

\begin{definition}(robust controlled forward (pre-)invariance of $\Huw$) \label{def:rcFI}
	The set $K \subset \reals^n$ is said to be robustly controlled forward pre-invariant for $\Huw$ as in \eqref{eq:Huw} via a state-feedback pair $\kpair$ if the set $K$ is robustly forward pre-invariant for the resulting closed-loop system $\Hcl$.
	The set $K \subset \reals^n$ is said to be robustly controlled forward invariant for $\Huw$ via a state-feedback pair $\kpair$ as in \eqref{eq:Huw} if the set $K$ is robustly forward invariant for the resulting closed-loop system $\Hcl$.
	\hfill $\square$
\end{definition}

\begin{remark}
	As mentioned in Section~\ref{sec:intro}, our notions apply to a more general class of systems, in particular, continuous-time, discrete-time, and hybrid systems with set-valued dynamics.
	Very importantly, compared to \cite[Definition 2.3]{blanchini1999set}, \cite[Definition 8]{Meyer.ea.16.Automatica} (for continuous-time systems) or \cite[Definition 1]{rakovic2004computation} (for discrete-time systems), our notions do not require uniqueness of solutions to the closed-loop system.
	\NotConf{
	In addition, notions of weak robust controlled forward (pre-)invariance for $\Huw$ can also be derived from the weak robust forward (pre-) invariance of $\Hw$ in \cite[Definition 3.1]{PartI-Arxiv}, following Definition~\ref{def:rcFI}.
	Note that all of the results in this paper naturally apply to hybrid systems without disturbances.
}
\end{remark}

Throughout this paper, we demonstrate our main results in two control design problems for mechanical systems, namely, a constrained bouncing ball moving vertically that is controlled by impacts at zero height; and a robotic manipulator interacting with a surface.

\begin{example}(Constrained bouncing ball system)\label{ex:bball-model}
	Consider the bouncing ball system shown in Figure~\ref{fig:bball}.
	We attach one end of a nonelastic string with length $\hmax$ to zero height and the other end to a ball.
	The ball can only travel vertically and is controlled by impacts at zero height.
\ConfSp{-3mm}
	\IfConf{
		\setlength{\unitlength}{0.15\textwidth}
	}{
		\setlength{\unitlength}{0.3\textwidth}}
	\begin{figure}[H]
		\centering
		\scriptsize
		\includegraphics[width= \unitlength]{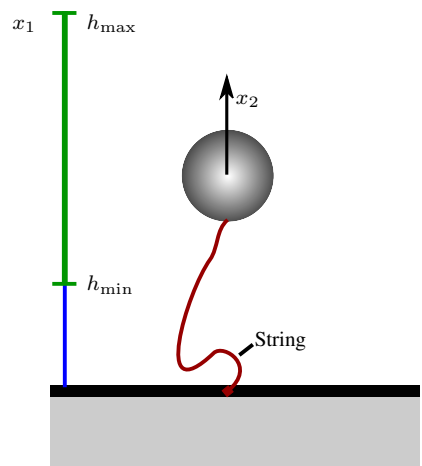}
		\put(-1.1,1.2){$x_1$}
		\put(-0.9,1.2){$\hmax$}
		\put(-0.5,1){$x_2$}
		\put(-0.45,0.35){String}
		\put(-0.9, 0.5){$\hmin$}
		\caption{Bouncing ball system configuration.}
		\label{fig:bball}
	\end{figure}
	\ConfSp{-5mm}
	Compared to a typical bouncing ball system \cite[Example 1.1]{65}, the model considered here has an additional ``pulling phase'' when the ball reaches the height $\hmax$ with possibly nonzero velocity.
	The possible pulls from the string at height $\hmax$ and the impacts between the ball and the controlled surface both lead to jumps of the state.
	In addition to assuming unitary mass of the ball and negligible weight of the string, forces, and friction, we consider the following:
	\begin{enumerate}[label = C\arabic*)]
		\item At impacts with the ground, the uncertain coefficient of restitution is within the range $[e_1, e_2]$, where $0 < e_1 < e_2 < 1$;
		\item\label{item:string} The string breaks when the ball pulls with velocity larger than $\vmax$;
		\item At pulls of the string, the restitution coefficient is $\ep \in (0, 1]$.
	\end{enumerate}
With $x = (x_1, x_2) \in \reals^2$, $x_1$ and $x_2$ model the height and velocity of the ball, respectively.
Then, with gravity constant $\gamma > 0$, the flow map is defined on $\reals_{\geq 0} \times \reals$ and is given by\footnote{Note that since there are no disturbances and inputs for flow, we omit the subscripts for $F$ and $C$ in this model.}
$$F(x) := (x_2, -\gamma).$$
To formulate the flow and jump set, we define a function $E: \reals^2 \rightarrow \reals$ that describes the total energy of the system as \IfConf{
	$E(x) = 0.5x_2^2 + \gamma x_1, \quad \forall x \in \reals^2.$
}{
follows:
\begin{align}\label{eq:BpE}
E(x) = \frac{x_2^2}{2}  + \gamma x_1 &\quad \forall x \in \reals^2.
\end{align}
}
According to \ref{item:string}, the string remains attached to the ball when $x_1 \in [0, \hmax]$ and $x_2 \leq \vmax$, i.e., $E(x) \leq \Emax$ with $\Emax := E(\hmax, \vmax)$.
After impacts with the controlled surface, the height of the ball $x_1$ remains unchanged, while the velocity $x_2$ is updated based on a function of the uncertain coefficient of restitution, which is treated as a disturbance $w_d \in \Wd :=[e_1, e_2]$, and the control input $u_d \in \Ud :=[0, \umax]$ with $\umax = \sqrt{2 \Emax}$, which represents the velocity change caused by the controlled surface.
Hence, we model impacts between the ball and the controlled surface as
\begin{align}\label{eq:BpJumpM1}
G_1(x,u_d, w_d) : = (x_1, u_d - w_dx_2)
\end{align}
when $x_1 = 0$ and $x_2  \leq 0$.
Before every impact, $x_2$ is nonpositive, and, after each impact, it is updated according to $G_1$.
Then, with a small constant $0 < \delta_p < \vmax$, the map
\begin{align}\label{eq:BpJumpM2}
G_2(x) : = (x_1, \min\{- \ep x_2, -\delta_p\})
\end{align}
models the pulls between the ball and the string when $x_1 = \hmax$ and $x_2 \in [0, \vmax]$.
Since before every pull, $x_2$ is nonnegative, after each pull the ball velocity reverses its sign and is updated according to $G_2$.
Note that since closed jump sets are preferred as suggested in \ref{item:A1} of Definition~\ref{def:hbc}, we only allow the $x_2$ component to jump to a strictly negative value that is lower bounded (and controllable) by $-\delta_p < 0.$

Then, the hybrid system $\Huw = (C,F,\Duw,\Guw)$ has $x = (x_1, x_2)$ as the state, $u_d$ as the control input and $w_d$ as the disturbance with $(x, u_d, w_d) \in {\cal X} = \reals^2 \times\Ud \times \Wd$ and dynamics given by
\begin{align}\label{eq:Bplant}
		\dot{x} &= F(x) 					 &x&\in C,\\
		x^+ &= \Guw(x,u_d, w_d) 	&(x, u_d, w_d) &\in \Duw,
\end{align}
where the flow set $C$ is given by
\begin{align}\label{eq:BC}
C: = \{x \in \reals^2 : 0 \leq x_1 \leq \hmax, E(x) \leq \Emax\},
\end{align}
the jump set $\Duw$ is given by $\Duw: = D^1_{u,w} \cup D^2_{u,w}$ with
\IfConf{
$D^1_{u,w} :=\{(x,u_d, w_d) \in {\cal X} : x_1 = 0, x_2 \in [-\sqrt{2\Emax}, 0]\},$
$D^2_{u,w} :=\{(x,u_d, w_d) \in {\cal X} : x_1 = \hmax, x_2 \in [0, \vmax]\},$
}{
\begin{align}\label{eq:BD}
&D^1_{u,w} :=\{(x,u_d, w_d) \in {\cal X} : x_1 = 0, x_2 \in [-\sqrt{2\Emax}, 0]\},\\
&D^2_{u,w} :=\{(x,u_d, w_d) \in {\cal X} : x_1 = \hmax, x_2 \in [0, \vmax]\},
\end{align}
}
and the jump map $\Guw$ is given by
\begin{align}\label{eq:BG}
\Guw(x,u_d, w_d) : =
\begin{cases}
G_1(x,u_d, w_d) &  \ifeq (x,u_d, w_d) \in D^1_{u,w}\\
G_2(x) &  \ifeq (x,u_d, w_d) \in D^2_{u,w}.
\end{cases}
\end{align}

\vspace{-4mm}
\noindent
We have the following control design goal: under the presence of disturbances $w_d$, design a feedback law assigning $u_d$ such that when the ball has initial condition $x(0,0) = (x_1(0,0), x_2(0,0))$ with $x_1(0,0) \in [\hmin, \hmax]$ and $E(x(0,0)) \in [0,\Emax]$, the string remains attached to the ball, and the peak height of the ball after each bounce is at least $\hmin$.
\hfill $\triangle$
\end{example}

	The next example presents an control design application with a control input that, unlike the system in Example~\ref{ex:bball-model}, is only active during flows.
\begin{example}(Robotic manipulator interacting with the environment)\label{ex:hmass-model}
	Consider a robotic manipulator interacting with a static working environment.
	As described in \cite[Section II.A]{13}, the interaction between the robotic manipulator and the working environment is captured by
	\begin{align*}
		\widetilde{M}(\theta) \ddot{x}_1 + \widetilde{C}(\theta, \dot{\theta}) \dot{x}_1 + \widetilde{N}(\theta, \dot{\theta}) = f_a - f_c,
	\end{align*}
	where $\widetilde{M}, \widetilde{C},$ and $\widetilde{N}$ represent the inertia matrix, the Coriolis matrix, and external forces (including the gravity) acting on the robotic arm joints, respectively.
	The term $f_a$ represents the actuator force and $f_c$ is the contact force.
	The state variable $x_1$ is the position of the end-effector of the manipulator and $\theta$ is the angle displacement of the joint.
	\ConfSp{-2mm}
	\IfConf{
	\setlength{\unitlength}{0.44\textwidth}
}{
	\setlength{\unitlength}{0.6\textwidth}}
	\begin{figure}[H]
		\centering
		\scriptsize
		\includegraphics[width= \unitlength]{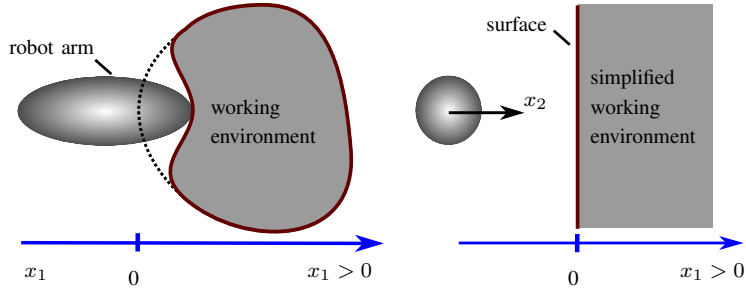}
		\put(-0.36,0.31){surface}
		\put(-0.22,0.24){simplified}
		\put(-0.22,0.2){working}
		\put(-0.22,0.16){environment}
		\put(-0.73,0.2){working}
		\put(-0.73,0.16){environment}
		\put(-1,0.28){robot arm}
		\put(-0.98, -0.02){$x_1$}
		\put(-0.6, -0.02){$x_1 > 0$}
		\put(-0.1, -0.02){$x_1 > 0$}
		\put(-0.31,0.21){$x_2$}
		\put(-0.25,-0.03){0}
		\put(-0.84,-0.03){0}
		\caption{Robotic manipulator system.}
		\label{fig:ptmass}
	\end{figure}
\ConfSp{-2mm}
	To stabilize some of the internal and external forces of the manipulator, a commonly used inner feedback law of the form
	\begin{align*}
		f_a = u_c + \widetilde{C}(\theta, \dot{\theta}) \dot{x}_1 + \widetilde{N}(\theta, \dot{\theta})
	\end{align*}
	is applied, see e.g. \cite{cavusoglu1997hybrid, tarn1996force}, which leads to
	\begin{align}\label{eq:raPlant}
		\widetilde{M}(\theta) \ddot{x}_1 = u_c - f_c.
	\end{align}
	Hence, the system dynamics are simplified to the interaction between the manipulator's end-effector and the working environment.
	Without loss of generality, only the constrained motion along a straight line is considered.
	More precisely, as depicted in Figure~\ref{fig:ptmass}, the simplified system consists of a point mass with unitary mass that only moves horizontally, and a elastic surface that represents the working environment.
	
	To mimic the different effects of elastic and plastic deformations of the working environment, a velocity threshold $\overline{v} > 0$ is introduced.
	More precisely, when the reaction stress of the material caused by the contact exceeds $\overline{v}$,  an impact occurs \cite{zhang2002modeling}.
	Similar to Example~\ref{ex:bball-model}, the impact is modeled using an uncertain coefficient of restitution within the range ${\cal W}_d := [e_1, e_2]$, where $0 < e_1< e_2< 1$.
	
	When the velocity is smaller than $\overline{v}$, the manipulator pushes against the surface, which results in a nonzero contact force $f_c$.
	With the (positive) elastic and viscous parameters of the contact denoted by $k_c$ and $b_c$, respectively, the discontinuous contact force is given by
	\begin{align}
		f_c(x) =
		\begin{cases}
			k_cx_1 + b_cx_2 & \ifeq x_1 \geq 0\\
			0 & \ifeq x_1 < 0.
		\end{cases}
	\end{align}
	For the resulting hybrid model to satisfy the hybrid basic conditions in Lemma~\ref{lem:hbc}, we consider the Filippov regularization of the contact force $f_c$ (see \cite[Chapter 4]{65}), which is given by
	\begin{align}
		f^r_c(x) =
			\begin{cases}
			k_cx_1 + b_cx_2 & \ifeq x_1 > 0\\
			\overline{\con}\{0, b_cx_2\} & \ifeq x_1 = 0\\
			0 & \ifeq x_1 < 0.
			\end{cases}
	\end{align} 
	Combining the above constructions, we model the dynamics of the manipulator as a hybrid system with input affecting the flows only and disturbances affecting the the jump only, i.e., $\Huw = (\Cu, \Fu, \Dw, \Gw)$.
	To this end, let the state variable be $x = (x_1, x_2) \in \reals^2$, where $x_1$ and $x_2$ represent the horizontal position and velocity of the point mass, respectively: see Figure~\ref{fig:ptmass}.
	The input force $u_c$ applied to the point mass is bounded and constrained to the set ${\cal U}_c := [-f_{max}, f_{max}]$.
	Using \eqref{eq:raPlant} and assuming that the inertia matrix is the identity, the flow map is given by $F_u(x, u_c) := (x_2, u_c - f^r_c(x))$.
	The flow set is given as\footnote{Note that noise in the applied input force at the point mass can be modeled as a disturbance $w_c$, however, we omit it for simplicity.}
	\begin{align}\label{eq:raFlowS}
		\Cu := &\{(x, u_c)\in \reals^2 \times {\cal U}_c: x_1 \leq 0\} \bigcup \\
				&\{(x, u_c)\in \reals^2 \times {\cal U}_c: x_1 \geq 0, x_2 \leq \overline{v}\}.
	\end{align}
	The jump set describes the condition that leads to an impact as discussed earlier, and it is given by
	\begin{align}\label{eq:raJumpS}
		\Dw := \{ (x, w_d) \in \reals^2 \times {\cal W}_d : x_1 \geq 0, x_2 \geq \overline{v} \}.
	\end{align}
	At such points, a jump happens according to the jump map
	\IfConf{
	$\Gw(x, w_d) := (x_1, -w_dx_2).$
}{
	\begin{align}\label{eq:raJumpM}
		\Gw(x, w_d) := (x_1, -w_dx_2).
	\end{align}
}
	Our goal is to design $u_c$ such that, regardless of whether the manipulator is in contact with the work environment or not, the end-effector stays within a safe region.
	\hfill $\triangle$
\end{example}

\NotConf{
\subsection{Conditions on a Pair $\kpair$ for Robust Controlled Forward Invariance}
Our first result consists of applying \cite[Theorem~4.15 and Lemma~4.12]{PartI-Arxiv} to derive conditions that a pair $\kpair$, along with the data of the hybrid system $\Huw$ and a given set $K$, should satisfy for $K$ to be robustly controlled invariant.
Though the result is not necessarily a systematic design tool, it provides checkable solution-independent conditions.

\begin{corollary}(robust controlled forward (pre-)invariance)\label{coro:rcFI}
	Consider a hybrid system $\Huw = \datauw$ as in \eqref{eq:Huw} and a $\Huw$-admissible state-feedback pair $\kpair$.
	Let the closed-loop system $\Hcl = \datacl$ satisfy the conditions in Definition~\ref{def:hbc}.
	Furthermore, suppose $K \subset \reals^n$ is a closed subset of $\overline{\Pwc{\Ccl}}\cup \Pwd{\Dcl}$ and $\Fcl$ is locally Lipschitz\footnote{See \cite[Definition A.4]{PartI-Arxiv}.} on $((\partial K + \delta \ball)\times \Wc) \cap \Ccl$ for some $\delta > 0$.
	Then, the set $K$ is robustly controlled forward pre-invariant for $\Huw$ via $\kpair$ if $K$ and $\datacl$ are such that
	\begin{enumerate}[label = \ref{coro:rcFI}.\arabic*), leftmargin=2.8\parindent]
		\item\label{item:rcFI1} For every $\xi \in (\partial K) \cap \Pwc{\Ccl}$, there exists a neighborhood $U$ of $\xi$ such that $\wxc(x) \subset \wxc(\xi)$ for every $x \in U \cap \Pwc{\Ccl}$;
		\item\label{item:rcFI2} For every $(x, w_d) \in (K \times \Wd) \cap \Dcl, \Gcl(x, w_d) \subset K$;
		\item\label{item:rcFI3} For every $(x, w_c) \in ((\partial(K \cap \Pwc{\Ccl}) \times \Wc) \cap \Ccl)\setminus \Lw, \Fcl(x,w_c) \subset T_{K \cap \Pwc{\Ccl}}(x)$, where $\Lw := \{(x,w_c) \in \Ccl : x \in \partial \Pwc{\Ccl}, \Fcl(x, w_c) \cap T_{\Pwc{\Ccl}}(x) = \emptyset\}$.
	\end{enumerate}
		Moreover, $K$ is robustly controlled forward invariant for $\Huw$ via $\kpair$ if, in addition
		\begin{enumerate}[label = \ref{coro:rcFI}.\arabic*), resume, leftmargin=2.8\parindent]
			\item\label{item:rcFI4} $(K \times \Wc) \cap \Ccl$ is compact, or
			$\Fcl$ has linear growth on $(K \times \Wc) \cap \Ccl$; and
			\item\label{item:rcFI5} $K \cap \Pwc{\Lw} \subset \Pwd{\Dcl}$.
		\end{enumerate}
\end{corollary}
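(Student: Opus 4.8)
The plan is to reduce the statement to the forward (pre-)invariance results already established for $\Hw$-type systems in the companion paper, and then to check that the stated hypotheses translate verbatim to the closed-loop data $\datacl$. By Definition~\ref{def:rcFI}, the set $K$ is robustly controlled forward pre-invariant (respectively, invariant) for $\Huw$ via $\kpair$ exactly when $K$ is robustly forward pre-invariant (respectively, invariant) for the closed-loop system $\Hcl$. Since $\Hcl$ shares the structure of the hybrid system $\Hw$ of \cite{PartI-Arxiv}, and since $\datacl$ satisfies the hybrid basic conditions of Definition~\ref{def:hbc} by assumption, I would apply \cite[Theorem~4.15 and Lemma~4.12]{PartI-Arxiv} directly to $\Hcl$, with $K$ in the role of the set to be rendered invariant.

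First I would verify the standing hypotheses of those results: $K$ is closed and contained in $\overline{\Pwc{\Ccl}}\cup\Pwd{\Dcl}$, and $\Fcl$ is locally Lipschitz on $((\partial K + \delta\ball)\times\Wc)\cap\Ccl$, which supplies the regularity near $\partial K$ that the constructive tangent-cone argument of \cite[Theorem~4.15]{PartI-Arxiv} requires. Next I would match the flow and jump conditions. Condition \ref{item:rcFI3} is the Nagumo-type flow condition $\Fcl(x,w_c)\subset T_{K\cap\Pwc{\Ccl}}(x)$, imposed uniformly over $w_c$ on the part of the boundary from which flow can actually leave $\Pwc{\Ccl}$, with the excluded set $\Lw$ collecting precisely those boundary states and disturbances for which $\Fcl(x,w_c)\cap T_{\Pwc{\Ccl}}(x)=\emptyset$. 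Condition \ref{item:rcFI2} is the jump invariance requirement $\Gcl(x,w_d)\subset K$ on $(K\times\Wd)\cap\Dcl$, again uniform in $w_d$. Condition \ref{item:rcFI1} provides the local inclusion $\wxc(x)\subset\wxc(\xi)$ of admissible flow disturbances near each $\xi\in(\partial K)\cap\Pwc{\Ccl}$, which is what \cite[Lemma~4.12]{PartI-Arxiv} uses to make the tangential condition hold robustly, i.e., uniformly over all disturbance signals compatible with $\Ccl$. With these identifications, \cite[Theorem~4.15]{PartI-Arxiv} yields $\rge\phi\subset K$ for every $(\phi,w)\in\sol_{\Hcl}(K)$, which is the desired robust forward pre-invariance.

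For the robust invariance conclusion I would invoke the completeness portion of \cite[Theorem~4.15]{PartI-Arxiv}, whose hypotheses are supplied by \ref{item:rcFI4} and \ref{item:rcFI5}, reasoning through the non-complete alternatives of Proposition~\ref{prop:wexistence}. Pre-invariance already confines $\rge\phi$ to $K\subset\overline{\Pwc{\Ccl}}\cup\Pwd{\Dcl}$, so it remains to exclude cases \ref{item:b} and \ref{item:c}. The growth/compactness condition \ref{item:rcFI4} rules out finite escape time during flows, in particular the right-open case \ref{item:b.2}, while condition \ref{item:rcFI5}, $K\cap\Pwc{\Lw}\subset\Pwd{\Dcl}$, guarantees that a jump is available precisely at the states in $K$ from which flow can no longer remain in $\Pwc{\Ccl}$, eliminating the remaining ``ends with flow'' termination \ref{item:b.1.2} and the ``ends with jump'' termination \ref{item:c.2}; case \ref{item:c.1} is excluded because pre-invariance keeps $\phi(T,J)$ inside $K$. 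Hence every maximal solution from $K$ is complete with $\rge\phi\subset K$, giving robust forward invariance of $K$ for $\Hcl$, and therefore robust controlled forward invariance of $K$ for $\Huw$ via $\kpair$.

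The main obstacle I expect lies in the second step: confirming that the three conditions, stated in terms of the closed-loop data and the set $\Lw$, correspond faithfully to the hypotheses of \cite[Theorem~4.15]{PartI-Arxiv} and \cite[Lemma~4.12]{PartI-Arxiv}, and in particular that the uniform-in-$w$ formulation of the tangential and jump conditions is exactly what is needed so that no admissible disturbance signal can steer a solution out of $K$. The completeness bookkeeping against Proposition~\ref{prop:wexistence} is then routine once pre-invariance is in hand, but care is required to see that \ref{item:rcFI5} addresses exactly the flow-blocked states identified through $\Lw$.
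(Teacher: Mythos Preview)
Your proposal is essentially the same as the paper's: reduce to the closed-loop system $\Hcl$ and apply \cite[Theorem~4.15]{PartI-Arxiv} (for pre-invariance) and \cite[Lemma~4.12]{PartI-Arxiv} (for completeness), then unwrap Definition~\ref{def:rcFI}.

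One attribution is swapped, however. You write that condition \ref{item:rcFI1} ``is what \cite[Lemma~4.12]{PartI-Arxiv} uses to make the tangential condition hold robustly.'' In the paper (and in Part~I), Lemma~4.12 is the finite-time boundedness result---it is paired with \ref{item:rcFI4}, not with \ref{item:rcFI1}. The local disturbance-inclusion condition \ref{item:rcFI1} is instead part of what feeds into \cite[Assumption~4.10]{PartI-Arxiv} and the proof of \cite[Theorem~4.15]{PartI-Arxiv} to make the Nagumo argument uniform in $w_c$. So your mapping of hypotheses to cited results should read: \ref{item:rcFI1}--\ref{item:rcFI3} $\Rightarrow$ hypotheses of Theorem~4.15 (pre-invariance); \ref{item:rcFI4} $\Rightarrow$ Lemma~4.12 (no finite escape); \ref{item:rcFI5} $\Rightarrow$ existence of a nontrivial continuation (jump available) from every $x\in K\cap\Pwc{\Lw}$. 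With that correction the argument is exactly the paper's; your case-by-case walk through Proposition~\ref{prop:wexistence} is a finer-grained version of the paper's one-line appeal to completeness.
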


\begin{proof}
	The proof exploits results in \cite{PartI-Arxiv}.
	Namely, applying \cite[Theorem~4.15]{PartI-Arxiv}, we show that the assumptions and conditions~\ref{item:rcFI1}-\ref{item:rcFI3} in Corollary~\ref{coro:rcFI} together imply the set $K$ is robustly pre-forward invariant for the closed-loop system $\Hcl$.
	In particular, $K \cap \Pwc{\Ccl}$ is closed since $K$ and $\Ccl$ are closed sets.
	Because of item~\ref{item:A2} and the assumption that $0 \in \wxc(x)$ for every $x \in \Pc{\Cuw}$, \cite[Assumption 4.10]{PartI-Arxiv} holds for $\Ccl, \Dcl, \Fcl$ and $K$.
	Note that in proof of \cite[Theorem 4.15]{PartI-Arxiv}, the locally Lipschitz property of $\Fcl$ in $w_c$ is only used on set $((\partial K + \delta \ball)\times \Wc) \cap \Ccl$ rather than on $\Ccl$.
	Hence, applying \cite[Theorem 4.15]{PartI-Arxiv}, since \ref{item:rcFI2} and \ref{item:rcFI3} imply 4.15.1) and 4.15.2), respectively, set $K$ is robustly controlled forward pre-invariant for $\Huw$ via $\kpair$ by Definition~\ref{def:rcFI}.
	
	With the addition of item \ref{item:rcFI4}, \cite[Lemma~4.12]{PartI-Arxiv} implies solution pairs are bounded in finite time.
	Then, item \ref{item:rcFI5} guarantees existence of nontrivial solution pairs from every $x \in K$ by guaranteeing jump is possible from every $x \in (K \cap \Pwc{\Lw})$.
	Therefore, $K$ is robustly forward invariant for $\Hcl$ and robustly controlled forward invariant for $\Huw$ via  $\kpair$.
\end{proof}

\begin{remark}
	The locally Lipschitzness of the set-valued map $\Fcl$ is crucial to make sure that every solution pair stays in the set $K$ during flows as shown in proof of \cite[Theorem 4.15]{PartI-Arxiv}.
	In addition, we refer readers to the example provided below \cite[Theorem 3.1]{blanchini1999set}, which shows that, even though $f(x) \in T_K$, a continuous-time system has solutions that leave a set due to the absence of locally Lipschitzness of the right-hand side of a continuous-time system.
	In addition, condition \ref{item:rcFI1} guarantees such property uniformly in $w_c$ (see the proof of \cite[Theorem 4.15]{PartI-Arxiv}.
\end{remark}

We use the next example to illustrate Corollary~\ref{coro:rcFI}.
\begin{example}(nonlinear planar system with jumps)\label{ex:rcFI}
	Consider a hybrid system $\Huw$ with flow map
	$$\Fuw(x, u_c, w_c) :=
	\left\{ \begin{bmatrix} x_1^2 - \gamma \\ x_1x_2 \end{bmatrix}
	u_c w_c : \gamma \in [3,4]\right\}$$
	defined for every $(x, u_c, w_c) \in \Cuw$, where the flow set is given by
	\begin{align*}
		\Cuw &:=\{(x, u_c, w_c) \in \reals^2 \times \reals \times [0, 1]:\\
		&|x| \geq 1, |x_1| \geq |u_c|, (|x|^2 -2)x_1^2 \leq u_cx_1 \leq (|x|^2 -1)x_1^2\}
	\end{align*}
	and jump map\footnote{
		$R(s) = \begin{bmatrix}
		\cos s & \sin s \\ -\sin s & \cos s
		\end{bmatrix}$ represents a rotation matrix.}
	$$\Guw(x, u_d, w_d) := \left\{ -R(u_dw_d) x, R(u_dw_d) x \right\},$$
	defined for every $(x, u_d, w_d) \in \Duw$, where the jump set is given by
	\begin{align*}
		\Duw : = \left \{(x, u_d, w_d) \in \reals^2 \times \reals \times [-1.1, 1.1]:
		x_1 = 0, |x| \geq 1, u_d \in \left[\frac{\pi}{4}, \frac{\pi}{2} \right] \right \}.
	\end{align*}
	\begin{figure}[h]
		\centering
		\includegraphics[width=0.6\unitlength]{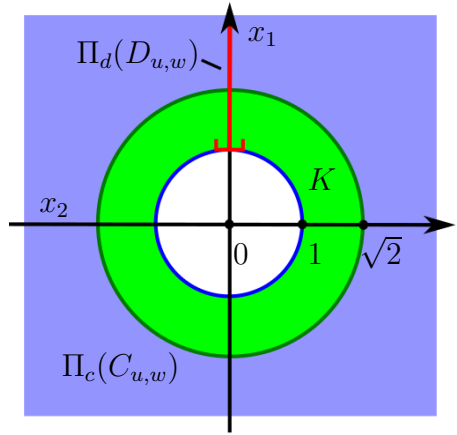}
		\put(-.28,.53){$x_1$}
		\put(-.56,.3){$x_2$}
		\put(-.3,.23){$0$}
		\put(-.2,.23){$1$}
		\put(-.13,.23){$\sqrt{2}$}
		\put(-.2,.33){$K$}
		\put(-.51,.5){$\Pd{\Duw}$}
		\put(-.53,.08){$\Pc{\Cuw}$}
		\caption{Set configuration for Example~\ref{ex:rcFI}.}
	\end{figure}

	\noindent Consider the set $K = \{x\in \reals^2: 1 \leq |x| \leq \sqrt{2} \},$ and a continuous state-feedback pair $(\kappa_c,\kappa_d)$ defined for every $x \in \reals^2$ as
	$$\kappa_c(x) = \left(|x|^2 - \frac{3}{2}\right)x_1 \text{, } \kappa_d(x) = \frac{\pi}{3}.$$
	By definition of $\Fuw$ and $\kc$, we have
	$$\Fcl(x, w_c) := \left\{ \begin{bmatrix}
	x_1^2 - \gamma\\ x_1x_2
	\end{bmatrix}\left(|x|^2 - \frac{3}{2}\right)x_1 w_c: \gamma \in [3,4]\right\},$$
	which is Lipschitz on the set $\Pc{\Cuw} \cap K = K$.
	The assumptions as well as conditions \ref{item:rcFI1} and \ref{item:rcFI4} in Corollary~\ref{coro:rcFI} hold by construction of $\Huw$, $\kpair$, and $K$.
	Consider a continuously differentiable function $V(x) := x^2_1+x^2_2$ for every $x \in \reals^2$.
	Since $\gamma \in [3,4]$ and $w_c \in [0,1]$, we have that for every $x$ such that $|x|  = 1$ and every $\xi \in \Fcl(x)$,
	\begin{align*}
		\inner{\nabla V(x)}{\xi} = 2x_1\xi_1 + 2x_2\xi_2 = (\gamma-1)x^2_1w_c \geq 0,
	\end{align*}
	and for every $x$ such that $|x|  = 2$ and every $\xi \in \Fcl(x)$,
	\begin{align*}
		\inner{\nabla V(x)}{\xi} = 2x_1\xi_1+2x_2\xi_2 = (2 -\gamma)x^2_1w_c \leq 0.
	\end{align*}
	Hence, item~\ref{item:rcFI3} holds and $\Lw = \emptyset$ by application of item~\ref{item:inner2} in Lemma~\ref{lem:innerTcone}.
	Condition \ref{item:rcFI2} holds because the rotation matrix $R$ only changes the direction of the vector $x$, while its magnitude remains the same after each jump.
	Item \ref{item:rcFI5} holds trivially as $\Lw = \emptyset.$
	Therefore, by an application of Corollary~\ref{coro:rcFI}, the set $K$ is robustly controlled forward invariant for system $\Huw$ via the given state-feedback pair $\kpair$.
	\hfill $\triangle$
\end{example}
}

\subsection{CLF-based Approach for the Design of Robust Invariance-based Feedback Laws}
\label{sec:design}
For systematic invariance-based feedback design, we propose control Lyapunov functions that are tailored to forward invariance properties.
We refer to these functions as {\em robust control Lyapunov functions for forward invariance}.
Under appropriate conditions, these functions can be used to systematically design state-feedback laws that render a particular sublevel set robustly forward invariant.
In simple words, a robust control Lyapunov function for forward invariance, denoted as $\Ly$, allows to select the inputs of $\Huw$ as a function of the state $x$ so that a set of the form
	\begin{align}\label{eq:K}
		\Mr = \Ls \cap (\Pc{\Cuw} \cup \Pd{\Duw}),
	\end{align}
which is a subset of the $\level$-sublevel set of $\Ly$, has the robust controlled forward invariance property introduced in Definition~\ref{def:rcFI}.
As expected, and as formally stated next, the function $\Ly$ needs to satisfy certain CLF-like properties involving the constant $\level$ defining the level of the sublevel set $\Ls$ and the data of $\Huw$.
In its definition, we employ the set-valued map
\IfConf{
	\begin{align}\label{eq:rld}
		\Ld(x) : = \{u_d \in \uxd(x): & \Guw(x,u_d, \wxud(x, u_d)) \subset \notag \\
		& \qquad \Pc{\Cuw} \cup \Pd{\Duw}\},
	\end{align}
}{
	\begin{align}\label{eq:rld}
		\Ld(x) : = &\{u_d \in \uxd(x): \Guw(x,u_d, \wxud(x, u_d)) \subset \Pc{\Cuw} \cup \Pd{\Duw}\},
	\end{align}
}
for every $x \in \Pd{\Duw}$, which, at each such $x$, collects all inputs $u_d$ such that, regardless of the value of the disturbance, the state $x$ after jumps is in the projection of the flow and jump set to the state space, namely, in $\Pc{\Cuw} \cup \Pd{\Duw}$.

\begin{definition}(RCLF for forward invariance for $\Huw$)\label{def:rclf}
	Consider a hybrid system $\Huw = \datauw$ as in \eqref{eq:Huw}, a constant $\rU \in \reals$, and a continuous function $\Ly : \reals^n \rightarrow \reals$ that is also continuously differentiable on an open set containing $\Pc{\Cuw}$.
	Suppose there exist continuous functions $\rc : \reals^n \rightarrow \reals$ and $\rd : \reals^n \rightarrow \reals_{\geq 0}$ such that, for some $\level < \rU$, \NotConf{we have}
	\begin{align}
		& \rc(x) > 0 \quad \forall x \in \Ir, \label{eq:CLF-rC}\\
		& \rd(x) > 0 \quad \forall x \in \Ls. \label{eq:CLF-rD}
	\end{align}
	Then, the pair $\clfpair$ defines a {\em robust control Lyapunov function (RCLF) for forward invariance} of the sublevel sets of $\Ly$ for $\Huw$ if
	\IfConf{
	\begin{align}
		\begin{split}
			\inf_{u_c \in \uxc(x)} \sup_{w_c \in \wxuc(x, u_c)} \sup_{\xi \in \Fuw(x, u_c,w_c)}
			& \hspace{-2mm} \inner{\nabla \Ly(x)}{\xi} + \rc(x) \leq 0\\
			& \hspace{-6mm} \forall x \in \Ir \cap \Pc{\Cuw},
		\end{split}\label{eq:CLF-C}
	\end{align}
	\begin{align}
		\begin{split}
			\inf_{u_d \in \Ld(x)} \sup_{w_d \in \wxud(x, u_d)}\sup_{\xi \in \Guw(x, u_d,w_d)}
			& \hspace{-2mm} \Ly(\xi) + \rd(x) \leq \level\\
			& \hspace{-6mm} \forall x \in \Ls \cap \Pd{\Duw}.
		\end{split}\label{eq:CLF-D}
	\end{align}
	}{
	\begin{align}
		& \inf_{u_c \in \uxc(x)} \sup_{w_c \in \wxuc(x)} \sup_{\xi \in \Fuw(x, u_c,w_c)}
		\inner{\nabla \Ly(x)}{\xi} + \rc(x) \leq 0
		&\forall x \in \Ir \cap \Pc{\Cuw}, \label{eq:CLF-C}\\
		& \inf_{u_d \in \Ld(x)} \sup_{w_d \in \wxud(x)}\sup_{\xi \in \Guw(x, u_d,w_d)}
		\Ly(\xi) + \rd(x) \leq \level
		&\forall x \in \Ls \cap \Pd{\Duw}. \label{eq:CLF-D}
	\end{align}
}
\ConfSp{-6mm}

\hfill $\square$
\end{definition}

\vspace{-2mm}
\begin{remark}
Compared to a typical control Lyapunov function (see, e.g., \cite[Definition 2.1]{75}), the RCLF for forward invariance in Definition~\ref{def:rclf} is not constrained to be lower and upper bounded by class ${\cal K}_\infty$ functions relative to a set.
Note that \eqref{eq:CLF-C} does not impose conditions in the interior of $\Ls$, but to avoid $V(x)$ from being larger than $r$, \eqref{eq:CLF-D} is enforced on $x \in \Ls \cap \Pd{\Duw}$
The strict positivity requirements in \eqref{eq:CLF-rC} and \eqref{eq:CLF-rD} are essential to make continuous selections in the forthcoming result.
\end{remark}

\begin{remark}
	The definition of robust control Lyapunov function (RCLF) for forward invariance of the sublevel sets of $V$ in Definition~\ref{def:rclf} is related to the notion of barrier function and control barrier function.
	It should be noted that different barrier notions are proposed in the literature, for continuous-time \cite{wills2004barrier},  discrete-time \cite{prajna2005optimization}, and hybrid systems, including hybrid automata \cite{prajna2007framework} and hybrid inclusions \cite{183}.
	Some of these references present necessary and sufficient conditions for forward invariance; see, e.g., \cite{prajna2005necessity} and  \cite{wisniewski2015converse}.
	With such barrier functions typically denoted as $B$,  the problem of rendering an $\level$-sublevel of a function $\Ly$ studied in this paper naturally leads to the barrier function $B(x) = \Ly(x) - \level$.
	With such definition, the barrier function resulting from this construction is close to the definition in \cite{prajna2007framework}.
	In particular, for a hybrid system with inputs and disturbances, our results allow for the design of control laws that guarantee robust forward invariance of the set $\{x : B(x) \leq 0\}$, properly restricted to the union of the flow and jump set.
\end{remark}

Next, we illustrate the concept of RCLFs for forward invariance in Definition~\ref{def:rclf} for the robotic manipulator system introduced in Example~\ref{ex:hmass-model}.
\begin{example}(RCLF for forward invariance for the robotic manipulator)\label{ex:hmass-rclf}
		Consider the function
		\begin{align}\label{eq:K-arm}
			\Ly(x) = \frac{1}{2}x^\top P x, \text{\quad with }\quad P = \begin{bmatrix} a & c\\ c &b \end{bmatrix} > 0.
		\end{align}
		We define the safe region described in Example~\ref{ex:hmass-model} using the $r$-sublevel set of $\Ly$, i.e., $L_\Ly(\level)$ with $\level > 0$.\NotConf{\footnote{
			The set $L_\Ly(\level)$ is an ellipse in $\reals^2$ such that, after the input is assigned to a state-feedback law, it is robust controlled forward invariant.}}
		Since $\Pc{\Cu} \scriptsize{\cup} \Dw \scriptsize{=} \reals^2$, the control objective is achieved by rendering the set
		\begin{align}\label{eq:raK}
			\K = \Ls \cap (\Pc{\Cu} \cup \Pd{\Dw}) = \Ls
		\end{align}
		robustly controlled forward invariant for $\Huw$.
	Considering the state-feedback control law given by $u_c = -Kx$ with $K = [k_p \ k_d]$, for every $x \in \Pc{\Cu}$ with $k_p, k_d > 0$.
	By properly designing $K$, we aim to render the set $\K$ given in \eqref{eq:raK} robustly controlled forward invariant for $\Huw$ in Example~\ref{ex:hmass-model}.
	To this end, under the effect of this feedback, the (set-valued) flow map can be written as 
	\begin{align}
		F_k(x) := \begin{bmatrix} 0 & 1\\ -k_p - k_c & A(x) \end{bmatrix}x,
	\end{align}
	where $A(x) :=
	\begin{cases}
		-k_d & \ifeq x_1 > 0\\
		-k_d - \overline{\con}\{ 0, b_c\} & \ifeq x_1 = 0\\
		-k_d-b_c & \ifeq x_1 < 0.
	\end{cases}$\\
	\noindent Using $\Ly$ defined in \eqref{eq:K-arm}, for every $x \in \reals^2$ and every $\eta \in F_k(x)$, if $\frac{a}{c} \geq \frac{k_c}{b_c}$, we have $\inner{\nabla \Ly(x)}{\eta} \leq x^\top Qx,$ where
	$Q = \begin{bmatrix} -2ck_p & a - bk_p - ck_d\\ a - bk_p - ck_d & 2c - 2bk_d \end{bmatrix}.$
	If we chose feedback parameters such that
	\begin{align}
		&4bck_pk_d - 4c^2k_p > (a - bk_p - ck_d)^2, \label{eq:detQ}\\
		&k_p > 1 - \frac{b}{c}k_d \label{eq:trQ}
	\end{align}
	then, the matrix $Q$ is negative definite\IfConf{.}{(see details in Lemma and proof in report version of this paper).}
	\NotConf{
		More precisely, the determinant of $Q(x)$, i.e., $\min\limits_{\xi \in F_k(x)} \det Q(x) = (4cbk_pk_d - 4c^2k_p) - (a - bk_p - ck_d)^2$, is strictly positive because of \eqref{eq:detQ}, and the trace of $Q$, i.e., $\max\limits_{\xi \in F_k(x)}\text{tr} Q(x) = 2c - 2ck_p - 2bk_d$, is strictly negative because of \eqref{eq:trQ}.
}
	Let $\level < \rU = b\overline{v}^2$ and $\rc(x) = - x^\top Q x,$ for every $x \in \reals^2$, \eqref{eq:CLF-C} holds since when, in particular, $u_c = kx$ we obtain
	$\inner{\nabla \Ly(x)}{\eta} + \rc(x) \leq 0.$
	Then, for every $x \in \reals^2$ and every $\eta \in F_k(x)$.
	In addition, given $\level \in \left(\frac{b\overline{v}^2}{2}, b\overline{v}^2 \right)$ we consider $\rd(x) : = \frac{(1 - e_2^2)b\overline{v}^2}{2}.$
	Hence, for every $x \in \Ls \cap D$, we have
	\IfConf{
	\begin{align*}
		\max\limits_{w_d \in [e_1, e_2]} &\Ly(G_w(x,w_d)) + \rd(x) - \level
		= \left(\frac{a}{2} x_1^2 + cx_1x_2+ \frac{b}{2}x_2^2\right) \\
		& \hspace{10mm} - \level +  \frac{(1 - e_2^2)b(\overline{v}^2 -x_2^2)}{2} - (1-e_2)cx_1x_2,
	\end{align*}
	}{
	\begin{align*}
		\max\limits_{w_d \in [e_1, e_2]} &\Ly(G_w(x,w_d)) + \rd(x) - \level\\
		= &\left(\frac{a}{2} x_1^2 + ce_2x_1x_2+ \frac{b}{2}(e_2x_2)^2\right) +  \frac{(1 - e_2^2)b\overline{v}^2}{2} - \level\\
		= &\left(\frac{a}{2} x_1^2 + cx_1x_2+ \frac{b}{2}x_2^2\right) - \level +  \frac{(1 - e_2^2)b(\overline{v}^2 -x_2^2)}{2}\\
		& \hspace{5cm} - (1-e_2)cx_1x_2,
	\end{align*}
}
	which is nonpositive since $e_2 \in(0,1), x_1 >0, x_2 \geq \overline{v}$ and every $x \in \Ls$ is such that $\Ly(x) \leq \level$.
	Therefore, \eqref{eq:CLF-D} holds and the pair $(\Ly, \rU)$ defines a robust control Lyapunov function for forward invariance for $\Huw$.
	\hfill $\triangle$
\end{example}

\ConfSp{-3mm}
Given a pair $\clfpair$ defined as in Definition~\ref{def:rclf} for $\Huw$ and $\level < \rU$ satisfying the conditions therein, our approach consists of selecting a state-feedback law pair $\kpair$ from these inequalities.
In fact, we are interested in synthesizing a pair $\kpair$ that, in particular, satisfies
\IfConf{
	\begin{align}
		\sup_{w_c \in \wxuc(x, \kc(x))} \sup_{\xi \in \Fuw(x,\kc(x),w_c)}
		& \inner{\nabla \Ly(x)}{\xi} + \rc(x) \leq 0\\
		& \hspace{3mm} \forall x \in \Ir \cap \Pc{\Cuw},\\ \label{eq:CLF-C-WithFeedback}
		\sup_{w_d \in \wxud(x, \kd(x))}\sup_{\xi \in \Guw(x, \kd(x),w_d)}
		& \Ly(\xi) + \rd(x) \leq \level\\
		& \hspace{5mm} \forall x \in \Ls \cap \Pd{\Duw}. \label{eq:CLF-D-WithFeedback}
	\end{align}
}{
	\begin{align}
		& \sup_{w_c \in \wxuc(x, \kc(x))} \sup_{\xi \in \Fuw(x,\kc(x),w_c)}
		\inner{\nabla \Ly(x)}{\xi} + \rc(x) \leq 0
		&\forall& x \in \Ir \cap \Pc{\Cuw}, \label{eq:CLF-C-WithFeedback}\\
		& \sup_{w_d \in \wxud(x, \kd(x))}\sup_{\xi \in \Guw(x, \kd(x),w_d)}
		\Ly(\xi) + \rd(x) \leq \level
		&\forall& x \in \Ls \cap \Pd{\Duw}, \label{eq:CLF-D-WithFeedback}
	\end{align}
}
Under certain mild conditions, such a pair renders the set $\Mr$ in \eqref{eq:K} robustly controlled forward invariant for $\Huw$.
Interestingly, with a constant parameter $\sigma \in (0,1)$, the selection of such a feedback pair can be performed by defining sets that nicely depend on the functions
	\begin{align}\label{eq:GammaC}
	\IfConf{
		\begin{split}
			&\Gamma_c(x, u_c) : = \\
			&\quad \begin{cases}
				\sup\limits_{w_c \in \wxuc(x, u_c)} \sup\limits_{\xi \in \Fuw(x, u_c, w_c)}
				\inner{\nabla \Ly(x)}{\xi} + \sigma\rc(x)\\
				& \hspace{-2cm}\ifeq (x,u_c) \in \Delta_c,\\
				-\infty &\hspace{-2cm}\otherw
			\end{cases}
		\end{split}
		}{
		\Gamma_c(x, u_c) : = \begin{cases}
			\sup\limits_{w_c \in \wxuc(x, u_c)} \sup\limits_{\xi \in \Fuw(x, u_c, w_c)}
				\inner{\nabla \Ly(x)}{\xi} + \sigma\rc(x)
				&  \ifeq (x,u_c) \in \Delta_c,\\
			-\infty &\otherw
		\end{cases}
	}
	\end{align}
for each $(x, u_c, w_c) \in \dc$, and
	\begin{align}\label{eq:GammaD}
	\IfConf{
		\begin{split}
			&\Gamma_d(x, u_d) : =\\
			&\quad \begin{cases}
				\sup\limits_{w_d \in \wxud(x, u_d)} \sup\limits_{\xi \in \Guw(x, u_d, w_d)}
				\Ly(\xi) + \sigma\rd(x) - \level\\
				& \hspace{-2cm} \ifeq (x,u_d) \in \Delta_d,\\
				-\infty & \hspace{-2cm} \otherw,
			\end{cases}
		\end{split}
		}{
			\Gamma_d(x, u_d) : =
				\begin{cases}
					\sup\limits_{w_d \in \wxud(x, u_d)}  \sup\limits_{\xi \in \Guw(x, u_d, w_d)} \!\!\!\!
					\Ly(\xi) + \sigma\rd(x) - \level & \ifeq (x,u_d) \in \Delta_d,\\
					-\infty & \otherw,
				\end{cases}
		}
	\end{align}
for each $(x,u_d, w_d) \in \dd$,
where $\Delta_c:=\{(x,u_c): (x, u_c,w_c) \in (\Mc \times \Uc \times \Wc) \cap \Cuw\}$, 
$\Delta_d:=\{(x,u_d): (x, u_d,w_d) \in (\Md \times \Ud \times \Wd) \cap \Duw\}$.
Moreover, we define
\IfConf{
	\begin{align}\label{eq:Ms}
		\Mc \scriptsize{: =} \Ir \cap \Pc{\Cuw}, \ \Md \scriptsize{: =} \Ls \cap \Pd{\Duw}. \quad
	\end{align}
}{
\begin{align}
	\begin{split}
		&\Mc : = \Ir \cap \Pc{\Cuw},\\
		&\Md : = \Ls \cap \Pd{\Duw}.
	\end{split}
	\label{eq:Ms}
\end{align}
}
\noindent
In fact, with these functions defined, by introducing  the set-valued maps
$\{u_c \in \uxc(x): \Gamma_c(x, u_c) < 0 \},$ and $\{u_d \in \Ld(x): \Gamma_d(x, u_d) < 0 \}$
which are the so-called {\em regulation maps} \cite{robust}, our approach is to determine a  state-feedback pair $\kpair$ that is selected from these maps; i.e., $\kpair$ is such that
\IfConf{
	$\kc(x) \in \{u_c \in \uxc(x): \Gamma_c(x, u_c) < 0 \},$ and
	$\kd(x) \in \{u_d \in \Ld(x): \Gamma_d(x, u_d) < 0 \}$
}{
$$\kc(x) \in \{u_c \in \uxc(x): \Gamma_c(x, u_c) < 0 \},$$
$$\kd(x) \in \{u_d \in \Ld(x): \Gamma_d(x, u_d) < 0 \}$$
}
at the appropriate values of the state $x$.

In Section~\ref{sec:rcFI-Lya}, we provide key results on robust forward invariance of sublevel sets of CLF-like functions, which are used in our CLF approach.
It turns out that when an RCLF for forward invariance for $\Huw$ is provided, regulation maps as outlined above can be constructed for selecting a state-feedback satisfying the conditions in the forthcoming Theorem~\ref{prop:Ly_pre} and Theorem~\ref{prop:Ly}; hence, the results in Section~\ref{sec:rcFI-Lya} enable us to show the desired invariance property under feedback.
Since according to Lemma~\ref{lem:hbc}, the closed-loop system $\Hcl$ satisfies conditions \ref{item:A1}-\ref{item:A3} in Definition~\ref{def:hbc} when the applied state-feedback pair is continuous,
we seek the design of a state-feedback pair $\kpair$ with $\kc$ and $\kd$ being continuous functions of
the state.
For this purpose, in Section~\ref{sec:rclf-FI}, we first reveal conditions assuring the existence of continuous selections from the regulation maps.
Our main design results are in Section~\ref{sec:rselect}, where we provide a explicit construction of $\kpair$ with pointwise minimum norm.

\ConfSp{-4mm}
\subsection{Robust Forward Invariance of Sublevel sets of Lyapunov-like Functions}
\label{sec:rcFI-Lya}
Building from \cite[Section V]{PartI-Arxiv}, we provide conditions for robust forward (pre-)invariance of sublevel sets of $\Ly$ for $\Hcl$, which in turn, provide insight for the invariance-based control design methods in Section~\ref{sec:rclf-FI} and Section~\ref{sec:rselect}.
More precisely, given a function $\Ly:\reals^n \rightarrow \reals$, we derive sufficient conditions to render its $\level-$sublevel set, with some abuse of notation, given as
\begin{align}\label{eq:Mr}
	\K= \Ls \cap (\Pwc{\Ccl} \cup \Pwd{\Dcl})
\end{align}
robust controlled forward (pre-)invariant for $\Huw$.

We consider Lyapunov-like functions that are tailored to forward invariance as introduced in Definition~\ref{def:rclf}.
Unlike the case for asymptotic stability, the proposed Lyapunov candidate does not necessarily strictly decreases along solutions outside of $\K$ or is nonincreasing inside of $\K$.
Building from \cite[Theorem 5.1]{PartI-Arxiv}, the next result characterizes the robust forward pre-invariance of $\K$ in terms of a Lyapunov-like functions.
\IfConf{
	Proof for Theorem~\ref{prop:Ly_pre} is presented in \cite{PartII-Arxiv}.
}{}
\begin{theorem}(robust forward pre-invariance of $\K$)\label{prop:Ly_pre}
	Given a hybrid system $\Hcl = \datacl$ as in \eqref{eq:Hcl}, suppose there exist a constant $\rU \in \reals$ and a continuous function $\Ly:\reals^n \rightarrow \reals$ that is continuously differentiable on an open set containing $\Pwc{\Ccl}$ such that
	\IfConf{
		\begin{align}
			\langle \nabla \Ly(x), \eta \rangle \leq 0 \qquad \forall (x, w_c) &\in (\Ir \times \Wc) \cap \Ccl,\\
			\forall \eta &\in \Fcl(x,w_c), \label{eq:ly1}
		\end{align}
		\begin{align}
			\Ly(\eta) \leq \level \qquad \ \qquad \forall (x, w_d) &\in (\Ls \times \Wd) \cap \Dcl,\\
			\forall \eta &\in \Gcl(x,w_d), \label{eq:ly2} \\
	\end{align}
}{
		\begin{align}
	\langle \nabla \Ly(x), \eta \rangle \leq 0 \qquad \forall (x, w_c) &\in (\Ir \times \Wc) \cap \Ccl,
	\forall \eta \in \Fcl(x,w_c), \label{eq:ly1}
\end{align}
\begin{align}
	\Ly(\eta) \leq \level \qquad \ \qquad \forall (x, w_d) &\in (\Ls \times \Wd) \cap \Dcl,
	\forall \eta \in \Gcl(x,w_d), \label{eq:ly2} \\
\end{align}
}

\ConfSp{-6mm}
\noindent for some $\level \scriptsize{\in} (-\infty,\rU)$ such that $\K$ is nonempty and closed, and
	\begin{align}\label{eq:lyJump}
		\Gcl((\K \times \Wd) \cap \Dcl) \subset \Pwc{\Ccl} \cup \Pwd{\Dcl}
	\end{align}
	holds.
	Then, the set $\K$ is robustly forward pre-invariant for $\Hcl$.
\end{theorem}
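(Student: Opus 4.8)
The plan is to fix an arbitrary solution pair $(\phi, w) \in \sol_{\Hcl}(\K)$ and show, by induction on the jump index $j$, that $\phi(t,j) \in \K$ for every $(t,j) \in \dom\phi$; since the pair is arbitrary, this yields $\rge\phi \subset \K$ and hence robust forward pre-invariance by Definition~\ref{def:rFI}. Throughout I would track two memberships separately: the sublevel condition $\Ly(\phi(t,j)) \leq \level$ (so that $\phi(t,j) \in \Ls$) and the domain condition $\phi(t,j) \in \Pwc{\Ccl} \cup \Pwd{\Dcl}$; together these give $\phi(t,j) \in \K$. The base case $\phi(0,0) \in \K$ is the standing hypothesis, so it remains to verify that a flow interval and a jump each preserve membership in $\K$.

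For the flow step, fix $j$ with $I^j$ of nonempty interior and set $g(t) := \Ly(\phi(t,j))$. Because $\Ly$ is continuously differentiable on an open set containing $\Pwc{\Ccl}$ and $t \mapsto \phi(t,j)$ is absolutely continuous with $\phi(t,j) \in \Pwc{\Ccl}$ for $t \in \inter I^j$ (by \ref{item:Sw1}), $g$ is absolutely continuous with $\dot g(t) = \langle \nabla\Ly(\phi(t,j)), \dot\phi(t,j)\rangle$ for almost all $t$. The heart of the argument is a barrier/crossing estimate: if $g(t_0) \leq \level$ then $g(t) \leq \level$ for all $t \geq t_0$ in $I^j$. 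Assuming a first time $\tau$ with $g(\tau) = \level$ and $g > \level$ immediately afterward, continuity together with $\level < \rU$ produces a right neighborhood of $\tau$ on which $g \in (\level, \rU)$, i.e. $\phi(t,j) \in \Ir$; there $(\phi(t,j), w_c(t,j)) \in (\Ir \times \Wc)\cap \Ccl$, so \eqref{eq:ly1} forces $\dot g \leq 0$ a.e., and absolute continuity gives $g(t) \leq g(\tau) = \level$, contradicting the crossing. This is exactly where $\level < \rU$ is used, and where condition \eqref{eq:ly1} encodes the tangent-cone inclusion $\Fcl(x,w_c) \subset T_{\Ls}(x)$ on the level set $\Ly^{-1}(\level)$, letting the flow-set regularity required in \cite[Theorem 5.1]{PartI-Arxiv} be relaxed. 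Consequently every interior flow point satisfies $\phi(t,j) \in \Ls \cap \Pwc{\Ccl} \subset \K$, and the endpoints of $I^j$ are limits of such interior points, hence in $\K$ because $\K$ is closed.

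For the jump step, suppose $(t,j),(t,j+1) \in \dom\phi$ and, by the induction hypothesis, $\phi(t,j) \in \K \subset \Ls$. By \ref{item:Sw2} we have $(\phi(t,j), w_d(t,j)) \in \Dcl$ and $\phi(t,j+1) \in \Gcl(\phi(t,j), w_d(t,j))$, so $(\phi(t,j), w_d(t,j)) \in (\Ls \times \Wd)\cap \Dcl$. Then \eqref{eq:ly2} yields $\Ly(\phi(t,j+1)) \leq \level$, i.e. $\phi(t,j+1) \in \Ls$, while \eqref{eq:lyJump} yields $\phi(t,j+1) \in \Pwc{\Ccl} \cup \Pwd{\Dcl}$; hence $\phi(t,j+1) \in \K$, closing the induction. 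Note also that the pre-jump point $\phi(t,j)$ lies in $\Pwd{\Dcl}$ by \ref{item:Sw2}, which covers the right endpoint of a flow interval that is immediately followed by a jump.

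I expect the flow step, and specifically the crossing estimate, to be the main obstacle, since one must rule out $g$ escaping through the level $\level$ across the region where \eqref{eq:ly1} is not assumed (the interior of $\Ls$, where $\Ly$ may increase). The argument handles this by invoking \eqref{eq:ly1} only on the strip $\Ir$ and exploiting $\level < \rU$ to supply a genuine barrier at $\Ly^{-1}(\level)$. The secondary subtlety, that flow membership in \ref{item:Sw1} is asserted only on $\inter I^j$, is resolved by the closedness of $\K$ rather than by any regularity of $\Pwc{\Ccl}$, which is precisely the point at which this proof is more economical than one relying on the flow-set regularity of \cite[Theorem 5.1]{PartI-Arxiv}.
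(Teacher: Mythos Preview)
Your proof is correct and rests on the same two ingredients as the paper's: a barrier/crossing argument along flows (using \eqref{eq:ly1} on the strip $\Ir$ together with $\level<\rU$) and the direct use of \eqref{eq:ly2} and \eqref{eq:lyJump} at jumps. The only structural difference is packaging. The paper first passes to the restricted system $\wHS$ obtained by intersecting the flow and jump sets of $\Hcl$ with $\Lss$, proves $\rge\phi\subset\K$ for maximal solutions of $\wHS$, and then closes with a separate argument that any maximal solution of $\wHS$ from $\K$ that stays in $\K$ is already maximal for $\Hcl$ (ruling out extensions by flow or jump outside $\K$). You instead work directly with $\Hcl$ and induct on the jump index, which makes the restriction step and the final maximality comparison unnecessary. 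The trade-off is minor: the paper's restriction localizes the analysis to $\Lss$ up front, whereas your right-neighborhood argument at the crossing time $\tau$ achieves the same localization on the fly; your route is slightly more economical, while the paper's decomposition isolates the ``stay in $\Ls$'' and ``stay in $\Pwc{\Ccl}\cup\Pwd{\Dcl}$'' parts more explicitly before reconciling with the original system.
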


\NotConf{
	The proof of Theorem~\ref{prop:Ly_pre} is presented in Appendix~\ref{appendix:ProofOfprop:Ly_pre}.
}

Conditions \eqref{eq:ly1}, \eqref{eq:ly2} and \eqref{eq:lyJump} can be used to check whether an already designed state-feedback pair $\kpair$ renders $\K$ given as in \eqref{eq:Mr} robustly controlled forward invariant for $\Huw$.

\begin{remark}
	A typical set of Lyapunov conditions for asymptotic stability analysis can be found in \cite[Theorem 3.18]{65}.
	These conditions ensure the decrease of $\Ly$ along solutions that are initialized outside of $\A$.
	In comparison to Theorem~\ref{prop:Ly_pre}, forward invariance requires the properties of the data of $\Hw$ and of $V$ relative to the set of interest, in our case, $\K$.
	Compared to \cite[Definition 3.16]{65} and \cite[Theorem 3.18]{65}, a function $\Ly$ as in Theorem~\ref{prop:Ly_pre} is a Lyapunov function candidate that satisfies less restrictive conditions, and certainly, does not guarantee attractivity.
	Such function $\Ly$ is neither bounded (from below and above) by two class-$\classKinfty$ functions, namely, it does not need to be positive definite and radially unbounded, nor has its change along solutions bounded by a negative definite function of the distance to the set of interest.
	In particular, for stability in the nominal case, item (3.2b) in \cite[Theorem 3.18]{65} asks $\inner{\nabla \Ly(x)}{\eta} \leq 0$ for all $x \in \Lss \cap C$ and all $\eta \in F(x)$, while \eqref{eq:ly1} allows $\inner{\nabla \Ly(x)}{\eta}$ to be positive at points $x \in \inter \Ls \cap C$.
	Similarly, during jumps, item (3.2c) in \cite[Theorem 3.18]{65} demands the change $\Ly(\eta) - \Ly(x)$ to be nonpositive for every $x \in \Ls\cap D$; while \eqref{eq:ly2} allows such changes to be positive at points $x \in \inter \Ls \cap D$ as long as it is such that $\Ly(\eta) \leq \level.$
	Such properties ensure solutions stay within $\Ls$ for any qualifying $\level < \rU$.\footnote{Note that solution pairs may escape $\Ls$ when $\level = \rU$.
		This is because $\inner{\nabla \Ly(x)}{\eta}$ is allowed to be zero in \eqref{eq:ly1}.}
	Note that \eqref{eq:ly1} and \eqref{eq:ly2} do not imply that maximal solutions are complete, neither to $\Hcl$ nor to the restriction of $\Hcl$ to $\Lss$.
	\NotConf{
	Other alternative conditions may involve a locally Lipschitz flow map $\Fcl$ similar to Corollary~\ref{coro:rcFI}.
}
\end{remark}

\begin{remark}\label{rem:zeroV}
	It is worth noting that due to being inequalities, the conditions in Theorem~\ref{prop:Ly_pre} cover the special cases where $\Ly$ remains constant on the flow set or on the jump set.
	In such a case, \eqref{eq:ly1} and \eqref{eq:ly2} in Theorem~\ref{prop:Ly_pre} are given by
	\IfConf{
		\begin{align}
			\langle \nabla \Ly(x), \eta \rangle  = 0 \qquad \ \  \forall  (x, w_c) &\in (\Lss \times \Wc) \cap \Ccl,\\
			\forall \eta &\in \Fcl(x,w_c), \label{eq:Rly1}
		\end{align}
		\begin{align}
			\Ly(\eta) -  \Ly(x) = 0 \qquad \forall (x, w_d) &\in (\Ls \times \Wd) \cap \Dcl,\\
			\forall \eta &\in \Gcl(x,w_d), \label{eq:Rly2}
		\end{align}
}{
	\begin{align}
	&\langle \nabla \Ly(x), \eta \rangle = 0 &\forall & (x, w_c) \in (\Lss \times \Wc) \cap \Ccl, \eta \in \Fcl(x,w_c),
	\label{eq:Rly1}\\
	&\Ly(\eta) - \Ly(x) = 0 & \forall & (x, w_d) \in (\Ls \times \Wd) \cap \Dcl, \eta \in \Gcl(x,w_d), \label{eq:Rly2}
	\end{align}
}
	respectively.
	Intuitively, when $\Ly$ does not change on $\Lss$, for any $\level < \rU$, solution pairs to $\Hcl$ stay within the $\level-$sublevel set during flows and jumps.
	Namely, we can employ \eqref{eq:Rly1} and \eqref{eq:ly2}, or \eqref{eq:ly1} and \eqref{eq:Rly2}, to verify robust forward pre-invariance of $\K$.
\end{remark}

\ConfSp{-1mm}
The observations in Remark~\ref{rem:zeroV} also extend to the case of hybrid systems where the control inputs affect only one regime, namely, either the flows or the jumps and $\Ly$ does not increase during the regime that is not affected by inputs.
Consequently, when verifying a RCLF candidate for such systems, we can omit checking the condition in \eqref{eq:CLF-C} if \eqref{eq:Rly1} or \eqref{eq:ly2} holds (or, respectively, omit checking \eqref{eq:CLF-D} when \eqref{eq:ly1} or \eqref{eq:Rly2} holds).
One such example is the controlled single-phase DC/AC inverter system in \cite[Section VI]{PartI-Arxiv}, for which \eqref{eq:Rly2} holds (a special case of \eqref{eq:ly2}).
Another example is the bouncing ball system introduced in Example~\ref{ex:bball-model}, where the total energy of the ball is used to construct the function $\Ly$ for invariance analysis.
During flows, no energy loss is considered.
Hence, the total energy level of the system remains constant during flows, which implies that the special case of \eqref{eq:ly1}, namely \eqref{eq:Rly1}, holds.
We illustrate such concept in the following example.

	\begin{example}(The RCLF for forward invariance for the bouncing ball system)\label{ex:bball-rclf}
		We define $\Ly(x) : = -E(x)$ for every $x \in C\cup \Pd{\Duw}$.
		Following formula given in \eqref{eq:K}, the control objective is achieved by rendering the set
		\begin{align}\label{eq:BK}
		\K = L_\Ly(-\gamma \hmin) \cap (C \cup \Pd{\Duw})
		\end{align}
		robustly controlled forward invariant for $\Huw$.
		Given system parameters $e_1, e_2, e_p, \vmax$ and $\hmax$, the control goal can be achieved for $\hmin$ such that $\sqrt{\gamma\left(\hmin + \frac{\varepsilon}{2}\right)} < e_1 \sqrt{\Emax}$ and with $\varepsilon > 0$,
		\IfConf{
			\vspace{-1mm}
		\begin{align}\label{eq:epsilon}
			\gamma(\hmin + \varepsilon) \leq 0.5(1 +e_1 - e_2)^2\Emax.
		\end{align}
	}{
		\begin{align}\label{eq:epsilon}
		\gamma(\hmin + \varepsilon) \leq \frac{(1 +e_1 - e_2)^2}{2}\Emax.
		\end{align}
	}
		Since the control input appears in the map $G_1$ only, for every $x \in \Pd{D_1}$, according to \eqref{eq:rld}, the set $\Ld$ in \eqref{eq:rld} is given by
		\ConfSp{-1mm}
		\begin{align}\label{eq:BLd}
		\Ld(x) = [0, \sqrt{2\Emax} + e_2x_2].
		\end{align}
		In fact, given such $x$, $\Ld$ collects all control input values $u_d$ such that $G_1(x,u_d, w_d) \in C \cup \Pd{\Duw}$ for all $w_d \in [e_1, e_2]$; i.e., every such $u_d$ is such that $E(0, G_1(x, u_d, e_2)) \leq \Emax$.
		
		Now, consider the constant $\rU = -\gamma(\hmin - \varepsilon)$ and the function $\rd$ defined as $\rd(x)  = \gamma \varepsilon$ for every $x \in \Ls$.
		We show that the pair $\clfpair$ defines a RCLF for forward invariance as in Definition~\ref{def:rclf}.
		First, \eqref{eq:Rly1} holds on $C$  since, for every $x \in C$,
		\IfConf{
			$\inner{\nabla \Ly(x)}{F(x)} = -x_2(-\gamma) - \gamma x_2 = 0.$
		}{
		\begin{align}\label{eq:BVdot}
		\inner{\nabla \Ly(x)}{F(x)} = -x_2(-\gamma) - \gamma x_2 = 0.
		\end{align}
	}
		Then, we show the pair $\clfpair$ is such that \eqref{eq:CLF-D} holds for $\level = -\gamma\hmin < \rU$.
		Moreover, for every $x \in \Ls \cap \Pd{D_1}$, we have
		\IfConf{
			\begin{align*}
				\min\limits_{u_d \in \Ld(x)} \max\limits_{w_d \in [e_1,e_2]} & \Ly(G_1(x,u_d, w_d))\\
				&= \min\limits_{u_d \in \Ld(x)} \max\limits_{w_d \in [e_1,e_2]} \left\{- 0.5(u_d - w_dx_2)^2\right\}\\
				&= - 0.5(\sqrt{2\Emax} + e_2x_2 - e_1x_2)^2
			\end{align*}
		}{
		\begin{align*}
		\min\limits_{u_d \in \Ld(x)} \max\limits_{w_d \in [e_1,e_2]} & \Ly(G_1(x,u_d, w_d))\\
		&= \min\limits_{u_d \in \Ld(x)} \max\limits_{w_d \in [e_1,e_2]} \left\{- \frac{(u_d - w_dx_2)^2}{2} \right\}\\
		&= - \frac{(\sqrt{2\Emax} + e_2x_2 - e_1x_2)^2}{2}.
		\end{align*}
	}
		Since $x_2 \in [-\sqrt{2\Emax}, -\sqrt{2\gamma\hmin}]$ and due to condition \eqref{eq:epsilon}, we have
		\IfConf{
			\begin{align*}
				\min\limits_{u_d \in \Ld(x)} &\max\limits_{w_d \in [e_1,e_2]}  \Ly(G_1(x,u_d, w_d)) + \rd(x)\\
				&\leq  - 0.5(\sqrt{2\Emax} + (e_2 - e_1)(-\sqrt{2\Emax}))^2 + \rd(x)\\
				&= - 0.5(1+e_1-e_2)^2 \Emax + \gamma \varepsilon
				\leq -\gamma \hmin = \level
			\end{align*}
		}{
		\begin{align*}
		\min\limits_{u_d \in \Ld(x)} &\max\limits_{w_d \in [e_1,e_2]}  \Ly(G_1(x,u_d, w_d)) + \rd(x)\\
		&\leq  - \frac{(\sqrt{2\Emax} + (e_2 - e_1)(-\sqrt{2\Emax}))^2}{2} + \rd(x)\\
		&= - \frac{(1+e_1-e_2)^2}{2}\Emax + \gamma \varepsilon
		\leq -\gamma \hmin = \level.
		\end{align*}
	}
		For every $x \in \Ls \cap \Pd{D_2}$, we have $x_2 \in [0, \vmax]$ and
		\IfConf{
			\begin{align*}
				\min\limits_{u_d \in \Ld(x)} \max\limits_{w_d \in [e_1,e_2]} & \Ly(G_2(x))\\
				&= - 0.5(\min\{- \ep x_2, -\delta_p\})^2 -\gamma \hmax < \level.
			\end{align*}
		}{
			\begin{align}\label{eq:Bg2}
			\hspace{-5mm}
			\begin{split}
			\min\limits_{u_d \in \Ld(x)} \max\limits_{w_d \in [e_1,e_2]}  \Ly(G_2(x))
			= - \frac{(\min\{- \ep x_2, -\delta_p\})^2}{2} -\gamma \hmax < \level.
			\end{split}
			\end{align}
		}
		Hence, the pair $\clfpair$ defines a robust control Lyapunov function for forward invariance for $\Huw$ according to Remark~\ref{rem:zeroV} and Definition~\ref{def:rclf}.
		\hfill $\triangle$
	\end{example}

Next, we derive conditions rendering the set $\K \subset \reals^n$ in \eqref{eq:Mr} robustly forward invariant for $\Hcl$ given as in \eqref{eq:Hcl}.
According to Definition~\ref{def:rcFI}, these conditions also imply the robustly controlled forward invariance of $\K$ for $\Huw$ via the pair $\kpair$.
The next result, whose proof is in Appendix~\ref{appendix:ProofOfprop:Ly}, follows from \cite[Theorem 5.1]{PartI-Arxiv} and ensures that every solution pair $(\phi, w) \in \sol_{\Hcl}(\K)$ has $\rge \phi \subset \K$.
Moreover, the proposed set of conditions guarantee existence and completeness of maximal solution pairs to $\Hcl$ from $\K$.
\begin{theorem}(robustly forward invariance of $\K$)\label{prop:Ly}
	Given a hybrid system $\Hcl =$ $\datacl$ as in \eqref{eq:Hcl}, suppose the set $\Ccl$ is closed, item~\ref{item:A2} in Definition~\ref{def:hbc} holds, and $(x, 0) \in \Cw$ for every $x \in \Pwc{\Ccl}$.
	Suppose there exist a constant $\rU \in \reals$ and a continuous function $\Ly:\reals^n \rightarrow \reals$ that is continuously differentiable on an open set containing $\Pwc{\Ccl}$ such that \eqref{eq:ly1} and \eqref{eq:ly2} in Theorem~\ref{prop:Ly_pre} hold for some $\level \in (-\infty, \rU)$ such that $\K$ is nonempty and closed, and \eqref{eq:lyJump} in Theorem~\ref{prop:Ly_pre} holds.
	Moreover, suppose
	\begin{enumerate}[label = \ref{prop:Ly}.\arabic*), leftmargin=3.2\parindent]
		\item\label{item:Ly1} for every $x \in \Ly^{-1}(\level) \cap \Pwc{\Ccl}$, $\nabla \Ly(x) \neq 0$;
		\item\label{item:Ly2} for every $x \in (\Ls \cap \partial \Pwc{\Ccl})\setminus \Pwd{\Dcl}$, $\Fcl(x, 0) \cap T_{\Pwc{\Ccl}}(x) \neq \emptyset$;
		\item\label{item:Ly3} for every $x \in (\Ly^{-1}(\level) \cap \partial \Pwc{\Ccl})\setminus \Pwd{\Dcl}$, the set $\Xi_x : = \{ \xi \in \Fcl(x, 0) \cap T_{\Pwc{\Ccl}}(x) :  \inner{\nabla \Ly(x)}{\xi} < 0\}$ is nonempty;
		\item\label{item:Ly4} $(\K \times \Wc) \cap \Ccl$ is compact, or
		$\Fcl$ has linear growth on $(\K \times \Wc) \cap \Ccl$.
	\end{enumerate}
	Then, the set $\K$ is robustly forward invariant for $\Hcl$.
\end{theorem}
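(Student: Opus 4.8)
The plan is to unpack the definition of robust forward invariance (Definitions~\ref{def:rFI} and \ref{def:rcFI}) into three obligations and discharge them separately: (i) $\rge\phi\subset\K$ for every $(\phi,w)\in\sol_{\Hcl}(\K)$; (ii) existence of a nontrivial solution pair from every $\xi\in\K$; and (iii) completeness of every maximal solution pair from $\K$. Obligation (i) is immediate, since the hypotheses \eqref{eq:ly1}, \eqref{eq:ly2} and \eqref{eq:lyJump} are precisely those of Theorem~\ref{prop:Ly_pre}; hence $\K$ is robustly forward pre-invariant for $\Hcl$ and (i) holds. The real work therefore lies in (ii) and (iii), both of which rest on a local viability (Nagumo-type) argument for flows inside $\K$.

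For obligation (ii) I would invoke Proposition~\ref{prop:wexistence} pointwise. If $\xi\in\K\cap\Pwd{\Dcl}$, a jump is admissible and a nontrivial solution pair exists directly. If $\xi\in\K\setminus\Pwd{\Dcl}$, then, since $\K\subset\Pwc{\Ccl}\cup\Pwd{\Dcl}$, necessarily $\xi\in\Pwc{\Ccl}$, and I would verify the viability condition \ref{item:VCw}. Taking $\widetilde w_c\equiv 0$, which is admissible by the standing assumption $(x,0)\in\Cw$ for all $x\in\Pwc{\Ccl}$, reduces the task to producing a viable arc of $\dot z\in\Fcl(z,0)$ in $\Pwc{\Ccl}$. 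Condition \ref{item:Ly2} supplies a tangent flow direction $\Fcl(x,0)\cap T_{\Pwc{\Ccl}}(x)\neq\emptyset$ at active boundary points, while at interior points of $\Pwc{\Ccl}$ the tangent cone is all of $\reals^n$ and nonemptiness of $\Fcl$ from \ref{item:A2} suffices; together with the outer semicontinuity, convexity, and local boundedness in \ref{item:A2}, a standard viability theorem yields the required arc on some $[0,\varepsilon]$.

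For obligation (iii) I would exclude every incomplete branch of the trichotomy in Proposition~\ref{prop:wexistence}. By pre-invariance the endpoint $\phi(T,J)$ of any maximal solution lies in $\K\subset\Pwc{\Ccl}\cup\Pwd{\Dcl}$, which instantly contradicts cases \ref{item:b.1.1} and \ref{item:c.1}. In the remaining ``ends with flow/jump'' cases necessarily $\phi(T,J)\notin\Pwd{\Dcl}$ (otherwise a jump would extend the solution), so $\phi(T,J)\in\K\cap\Pwc{\Ccl}\subset\Ls\cap\Pwc{\Ccl}$, and the same viability construction as in (ii) shows that flow from $\phi(T,J)$ is possible, contradicting \ref{item:b.1.2}, \ref{item:b.2} and \ref{item:c.2}. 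Condition \ref{item:Ly4} (compactness of $(\K\times\Wc)\cap\Ccl$ or linear growth of $\Fcl$) forbids finite escape during flow, so the open-to-the-right subcase \ref{item:b.2} cannot arise from a blow-up either. With all incomplete branches excluded, only case \ref{item:a} survives and every maximal solution pair is complete.

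The hard part will be making the viability argument deliver arcs that remain in $\K=\Ls\cap(\Pwc{\Ccl}\cup\Pwd{\Dcl})$, not merely in $\Pwc{\Ccl}$. At ``corner'' points where both $\Ly^{-1}(\level)$ and $\partial\Pwc{\Ccl}$ are active one must exhibit a single admissible velocity that is simultaneously tangent to $\Pwc{\Ccl}$ and strictly inward for $\Ly$; this is exactly what condition \ref{item:Ly3} provides via $\Xi_x\neq\emptyset$, whereas \eqref{eq:ly1} handles points with only the level-set constraint active and \ref{item:Ly2} handles points with only the flow-set constraint active. Condition \ref{item:Ly1} (namely $\nabla\Ly(x)\neq0$ on $\Ly^{-1}(\level)\cap\Pwc{\Ccl}$) is the constraint qualification that lets me identify $T_{\Ls}(x)$ with $\{\xi:\inner{\nabla\Ly(x)}{\xi}\leq0\}$ so that these tangency statements compose correctly. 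Executing this uniform-in-$w_c$ Nagumo argument under merely outer semicontinuous, set-valued dynamics, and certifying that the constructed arc stays in $\K$ throughout, is the technical crux, and is where I would lean on the constructive flow extension of \cite[Theorem~5.1]{PartI-Arxiv}.
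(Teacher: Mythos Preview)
Your plan is correct and mirrors the paper's proof: invoke Theorem~\ref{prop:Ly_pre} for pre-invariance, then use Proposition~\ref{prop:wexistence} both to obtain a nontrivial solution pair from each $\xi\in\K$ and to rule out every non-complete branch. The paper packages exactly the ``hard part'' you flag in your last paragraph as the separate Lemma~\ref{lem:Tcone}, which proves $\Fcl(x,0)\cap T_{\K\cap\Pwc{\Ccl}}(x)\neq\emptyset$ for every $x\in\partial(\K\cap\Pwc{\Ccl})\setminus\Pwd{\Dcl}$ via the same three-case split you describe (interior of $\Ls$ with $\partial\Pwc{\Ccl}$ active, $\Ly^{-1}(\level)$ with $\inter\Pwc{\Ccl}$, and the corner $\Ly^{-1}(\level)\cap\partial\Pwc{\Ccl}$, using \ref{item:Ly2}, \eqref{eq:ly1}+\ref{item:Ly1}, and \ref{item:Ly3} respectively). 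Your observation that viability must be run on $\K\cap\Pwc{\Ccl}$ rather than on $\Pwc{\Ccl}$ alone is exactly right: \ref{item:Ly2} is only assumed on $\Ls\cap\partial\Pwc{\Ccl}$, so a Nagumo argument on the full $\Pwc{\Ccl}$ would be unjustified.

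Two small points. First, do not list \ref{item:b.2} among the cases contradicted by ``flow from $\phi(T,J)$ is possible'': in \ref{item:b.2} the interval $I^J$ is open to the right and $(T,J)\notin\dom(\phi,w)$, so there is no $\phi(T,J)$ from which to start a viable arc; \ref{item:Ly4} alone disposes of \ref{item:b.2}, precisely as the paper argues. Second, your exclusion of \ref{item:b.1.1} and \ref{item:c.1} via pre-invariance (endpoint in $\K\subset\Pwc{\Ccl}\cup\Pwd{\Dcl}$) is a clean variant; the paper instead uses closedness of $\K\cap\Pwc{\Ccl}$ for \ref{item:b.1.1} and \eqref{eq:lyJump} for \ref{item:c.1}, but both routes are valid.
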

The proof of Theorem~\ref{prop:Ly} is presented in Appendix~\ref{appendix:ProofOfprop:Ly}.

Compared to \cite[Theorem 5.1]{PartI-Arxiv}, item~\ref{item:Ly3} does not require the set $\Pwc{\Cw}$ to be regular as in item~5.1.3) of \cite[Theorem 5.1]{PartI-Arxiv}; see also Lemma~\ref{lem:Tcone} for details.

\begin{remark}
	Forward invariance that is uniform in the disturbances is key for certifying safety in real-world applications.
	As mentioned in Section~\ref{sec:intro}, barrier certificates have been shown to be useful for the study of safety, i.e., the problem of whether solutions initiated from a given set would reach an unsafe set.
	In particular, \cite{prajna2004safety} and \cite{kong2013exponential} pertain to safety for a class of hybrid systems modeled as hybrid automata.
	In these articles, barrier functions are used to characterize safe sets
	A barrier function has strictly positive values in the unsafe sets and nonpositive values otherwise.
	The conditions proposed guarantee that along every solution from an initial set, the values of these functions are nonincreasing.
	When compared to the conditions in \cite{prajna2004safety} and \cite{kong2013exponential}, the control Lyapunov function for forward invariance in Theorem~\ref{prop:Ly} does not need to be strictly positive outside of the set to be rendered forward invariant, c.f. in \cite[Theorem 2]{prajna2004safety};
	nor does need to satisfy the exponential condition required in \cite[Theorem 1]{kong2013exponential}.
	For nonlinear continuous-time system, \cite{ames2016control} provides two types of control barrier functions and compares them to exponentially stabilizing control Lyapunov functions.
	Aside from the differences in signs within the set of interests and the type of systems we study, our results do not require the control input to be locally Lipschitz as in \cite[Corollary 1]{ames2016control}; see, e.g., Theorem~\ref{prop:Ly}.
\end{remark}




\subsection{Existence of Pair $\kpair$ for Robust Controlled Forward Invariance}
\label{sec:rclf-FI}
Next, building from Theorem~\ref{prop:Ly_pre}, we establish conditions to guarantee existence of a continuous state-feedback pair $\kpair$ to render the set $\Mr$ robustly controlled forward pre-invariant for $\Huw$.
\begin{theorem}(existence of state-feedback pair for robust controlled forward pre-invariance using RCLF for forward invariance) \label{thm:weakExist}
	Consider a hybrid system $\Huw = \datauw$ as in \eqref{eq:Huw} satisfying conditions \ref{item:A1p}-\ref{item:A3p} in Lemma~\ref{lem:hbc} and such that $\wxuc$ and $\wxud$ are locally bounded.
	Suppose there exists a pair $\clfpair$ that defines a robust control Lyapunov function for forward invariance for $\Huw$ as in Definition~\ref{def:rclf}.
	Let $\level < \rU$ satisfy \eqref{eq:CLF-rC}-\eqref{eq:CLF-D}, $\Ld$ be given as in \eqref{eq:rld}, and $\sigma \in (0,1)$.
	If the following conditions hold:
	\begin{enumerate}[label = \ref{thm:weakExist}.\arabic*),leftmargin=3.2\parindent]
		\item\label{item:weakExist1} The set-valued maps $\uxc$ and $\Ld$ are lower semicontinuous, and
		$\uxc$ and $\Ld$ have nonempty, closed, and convex values on the sets $\Pc{\Cuw}$ and $\Md$ as in \eqref{eq:Ms}, respectively;
		\item\label{item:weakExist2} For each $x \in \Mc$, the function $u_c \mapsto \Gamma_c(x, u_c)$ in \eqref{eq:GammaC} is convex on $\uxc(x)$ and,
		for each $x \in \Md$, the function $u_d \mapsto \Gamma_d(x, u_d)$ in \eqref{eq:GammaD} with is convex on $\Ld(x)$;
	\end{enumerate}
	then, the set $\Mr$ in \eqref{eq:Mr} is robustly controlled forward pre-invariant for $\Huw$ via a state-feedback pair $\kpair$ with $\kc$ being continuous on $\Mc$ and $\kd$ being continuous on $\Md$.
\end{theorem}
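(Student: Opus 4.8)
The plan is to construct the feedback pair by making continuous selections from the two \emph{regulation maps}
\[
R_c(x) := \{u_c \in \uxc(x) : \Gamma_c(x, u_c) < 0\}, \qquad R_d(x) := \{u_d \in \Ld(x) : \Gamma_d(x, u_d) < 0\},
\]
and then to verify, via Theorem~\ref{prop:Ly_pre}, that any pair selected from them renders $\Mr$ robustly forward pre-invariant. First I would establish that $R_c$ and $R_d$ are nonempty-valued on $\Mc$ and $\Md$, respectively. This follows from the defining RCLF inequalities \eqref{eq:CLF-C}--\eqref{eq:CLF-D} together with $\sigma \in (0,1)$: since $\rc(x) > 0$ on $\Ir$, inequality \eqref{eq:CLF-C} gives $\inf_{u_c \in \uxc(x)} \Gamma_c(x, u_c) \leq -(1-\sigma)\rc(x) < 0$, so there exists $u_c \in \uxc(x)$ with $\Gamma_c(x, u_c) < 0$; the argument for $R_d$ using \eqref{eq:CLF-D} and $\rd(x) > 0$ on $\Ls$ is identical. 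Convexity of the values of $R_c$ and $R_d$ is then immediate from condition \ref{item:weakExist2} (convexity of $u_\star \mapsto \Gamma_\star(x, u_\star)$) and the convexity of the values of $\uxc$ and $\Ld$ asserted in \ref{item:weakExist1}, since each value is the intersection of a convex set with the convex sublevel set $\{\Gamma_\star(x,\cdot) < 0\}$.

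The core technical step is to show that $R_c$ and $R_d$ are lower semicontinuous on $\Mc$ and $\Md$. The key auxiliary fact is that $\Gamma_c$ and $\Gamma_d$ are upper semicontinuous on $\Delta_c$ and $\Delta_d$: because $\Fuw$ and $\Guw$ are outer semicontinuous and locally bounded with closed values (conditions \ref{item:A2p}--\ref{item:A3p}), while $\wxuc$ and $\wxud$ are outer semicontinuous (closedness of $\Cuw, \Duw$) and locally bounded by hypothesis, the iterated suprema defining $\Gamma_c, \Gamma_d$ are upper semicontinuous marginal functions of the continuous integrands $x \mapsto \inner{\nabla \Ly(x)}{\cdot}$ and $\Ly$, and adding the continuous terms $\sigma\rc$ and $\sigma\rd - \level$ preserves upper semicontinuity. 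Consequently the sets $\{\Gamma_\star(x,\cdot) < 0\}$ are open and vary lower semicontinuously, and, combining this with the convexity of the values and the lower semicontinuity of $\uxc$ and $\Ld$ from \ref{item:weakExist1}, the intersection maps $R_c$ and $R_d$ are lower semicontinuous. This is the step I expect to be the main obstacle, since it requires merging the lower semicontinuity of the admissible-input maps with the upper semicontinuity of the CLF-decrease functionals; the convexity hypothesis \ref{item:weakExist2} is exactly what keeps the intersection lower semicontinuous rather than merely nonempty.

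With $R_c$ and $R_d$ shown to be lower semicontinuous with nonempty convex values on the closed, hence paracompact, sets $\Mc$ and $\Md$, I would invoke Michael's continuous selection theorem (in the form used in \cite{robust} for regulation maps with convex values, which accommodates the open strict-inequality values) to obtain continuous selections $\kc : \Mc \to \Uc$ and $\kd : \Md \to \Ud$ with $\kc(x) \in R_c(x)$ and $\kd(x) \in R_d(x)$.

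Finally, I would verify that this pair meets the hypotheses of Theorem~\ref{prop:Ly_pre} for the closed-loop system $\Hcl$. For flows, $\Gamma_c(x, \kc(x)) < 0$ on $\Mc = \Ir \cap \Pc{\Cuw}$ together with $\sigma\rc(x) > 0$ forces $\inner{\nabla \Ly(x)}{\xi} < 0$ for every admissible $w_c$ and every $\xi \in \Fcl(x, w_c)$, which establishes \eqref{eq:ly1} (and since $\Pwc{\Ccl} \subset \Pc{\Cuw}$, checking on $\Mc$ suffices). For jumps, $\kd(x) \in \Ld(x)$ ensures $\Gcl((\K \times \Wd) \cap \Dcl) \subset \Pwc{\Ccl} \cup \Pwd{\Dcl}$, giving \eqref{eq:lyJump}, while $\Gamma_d(x, \kd(x)) < 0$ on $\Md$ together with $\sigma\rd(x) > 0$ forces $\Ly(\xi) < \level$ for every admissible $w_d$ and every $\xi \in \Gcl(x, w_d)$, which is \eqref{eq:ly2}. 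Since the continuity of $\kc, \kd$ and closedness of $\Cuw, \Duw$ make $\K = \Mr$ closed and it is nonempty by construction, Theorem~\ref{prop:Ly_pre} applies and certifies that $\Mr$ is robustly forward pre-invariant for $\Hcl$; by Definition~\ref{def:rcFI} this is precisely robust controlled forward pre-invariance of $\Mr$ for $\Huw$ via $\kpair$.
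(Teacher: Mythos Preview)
Your approach is essentially the paper's: define the regulation maps, show $\Gamma_c,\Gamma_d$ are upper semicontinuous (via outer semicontinuity and local boundedness of $\wxuc,\wxud,\Fuw,\Guw$), deduce lower semicontinuity of the regulation maps, apply Michael's selection theorem, and close with Theorem~\ref{prop:Ly_pre}. Two points deserve attention.

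First, there is a step you omit that the paper makes explicit. Your selections $\kc,\kd$ are defined only on $\Mc=\Ir\cap\Pc{\Cuw}$ and $\Md=\Ls\cap\Pd{\Duw}$. But $\Mc$ is only the ``shell'' $\{\level\le\Ly(x)\le\rU\}\cap\Pc{\Cuw}$; if $\kc$ is not extended to the rest of $\Pc{\Cuw}$, the closed-loop flow set $\Ccl$ contains no points with $\Ly(x)<\level$, and then the set $\K$ in \eqref{eq:Mr} is not the intended one (it degenerates to a subset of $\Ly^{-1}(\level)\cup\Md$). The paper handles this by extending $\kc$ and $\kd$ --- not necessarily continuously --- to all of $\Pc{\Cuw}$ and $\Pd{\Duw}$ via arbitrary selections from $\uxc(x)$ and $\Ld(x)$; continuity is only claimed on $\Mc$ and $\Md$, which is exactly the theorem's conclusion. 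You should add this extension step before invoking Theorem~\ref{prop:Ly_pre}.

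Second, a minor technical point: the paper applies Michael's theorem to maps with \emph{closed} values by first extending the regulation maps to be $\reals^{m_\star}$ outside $M_\star$ and then selecting from $\overline{S_\star(x)}$, so the resulting feedback satisfies $\Gamma_\star(x,\kappa_\star(x))\le 0$ rather than a strict inequality. This nonstrict bound is still sufficient for \eqref{eq:ly1} and \eqref{eq:ly2}, so no harm is done, but your phrasing ``$\kc(x)\in R_c(x)$'' with strict inequality glosses over the closure step that the standard Michael theorem requires.
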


\begin{proof}
	To establish the result, we first show the existence of continuous control laws for a restricted version of the original hybrid system $\Huw$ that is given by
\ConfSp{-1mm}
	\begin{align}\label{eq:rHuw}
		\widetilde{\HS}\uw
		\begin{cases}
			\dot{x} \ \ \in \Fuw(x, u_c, w_c) & (x,u_c, w_c) \in \wC\uw\\
			x^+ \in \Guw(x,u_d, w_d)& (x,u_d, w_d) \in \wD\uw,
		\end{cases}
	\end{align}
	where $\wC\uw := (\Mc \times \Uc \times \Wc) \cap \Cuw$ and $\wD\uw := (\Md \times \Ud \times \Wd) \cap \Duw.$
	To this end, using $\Gamma_c$ and $\Gamma_d$ given as in \eqref{eq:GammaC} and \eqref{eq:GammaD}, for each $x \in \reals^n$, we define the set-valued maps
	\begin{align}
		\begin{split}
			& \wS_c(x) : = \{u_c \in \uxc(x): \Gamma_c(x, u_c) < 0 \},\\
			& \wS_d(x) : = \{u_d \in \Ld(x): \Gamma_d(x, u_d) < 0 \}.
		\end{split}\label{eq:Sstar}
	\end{align}
	By definition of $\Ld(x)$ in \eqref{eq:rld} and condition \ref{item:weakExist1}, the maps $\uxc$ and $\Ld$ are lower semicontinuous and for every $x \in \Md, \Ld(x)$ is a nonempty, convex subset of $\uxd(x)$.
	Then, we show the maps $\wS_c$ and $\wS_d$ are lower semicontinuous by applying Corollary~\ref{coro:2.13}.
	First, we establish that the functions $\Gamma_c$ and $\Gamma_d$ are upper semicontinuous by observing the properties of the maps $\wxuc, \wxud, \Fuw$ and $\Guw$.
	\begin{enumerate}[label = \roman*), leftmargin = 6mm]
		\item\label{item:wxd1} The set-valued maps $\wxuc$ and $\wxud$ are upper semicontinuous by a direct application of \cite[Lemma 5.15]{65}:
		the maps $\wxuc$ and $\wxud$ defined in \eqref{eq:wxu} have closed graphs because sets $\Cuw$ and $\Duw$ are closed, (to see this, note that $\gph \wxuc = \gph \wxud = S$)-- this leads to their outer semicontinuity by \cite[Lemma 5.10]{65}-- and by the assumption that $\wxuc$ and $\wxud$ are locally bounded;
		\item\label{item:wxd2} The maps $\wxuc$ and $\wxud$ have compact images:
		this property directly follows from outer semicontinuity and locally boundedness of $\wxuc$ and $\wxud$;
		\item The set-valued maps $\Fuw$ and $\Guw$ are upper semicontinuous by applying \cite[Lemma 5.15]{65} while noting that item \ref{item:A2p} and \ref{item:A3p} of Lemma~\ref{lem:hbc} hold;
		\item The maps $\Fuw$ and $\Guw$ have compact images, which follows from the fact that $\Fuw$ and $\Guw$ are locally bounded, and are outer semicontinuous.
	\end{enumerate}
	Moreover, continuously differentiability of $\Ly$ and the continuity of $\rc$ and $\rd$ imply the continuity of the functions been taken supremum in \eqref{eq:GammaC} and \eqref{eq:GammaD}.
	With the properties of $\wxuc, \wxud, \Fuw$ and $\Guw$., the single-valued maps $\Gamma_c$ and $\Gamma_d$ are upper semicontinuous by applying \cite[Proposition 2.9]{robust} twice while noting that for every $(x,u_c, w_c) \in (\dc) \setminus \wC\uw, \Gamma_c(x,u_c) = -\infty$ and for every $(x,u_d, w_d) \in (\dd) \setminus \wD\uw, \Gamma_d(x,u_d) = -\infty$.
	Then, applying Corollary~\ref{coro:2.13}, with $z = x, z' = u_c$ (or $z' = u_d$), $W = \uxc$ (or $W = \Ld$), and $w = \Gamma_c$ (or $w = \Gamma_d$, respectively) $\wS_c$ (or $\wS_d$, respectively) is lower semicontinuous.
	The maps $\wS_c$ and $\wS_d$ have nonempty values on $\Mc$ and $\Md$, respectively.
	This is because, first, $\uxc$ and $\Ld$ have nonempty values on $\Mc$ and $\Md$, respectively.
	In addition, since the inequalities in \eqref{eq:CLF-C} and \eqref{eq:CLF-D} hold, for each $(x, u_c) \in \Delta_c$, we have
	\IfConf{
		$\Gamma_c(x, u_c) + (1 - \sigma)\rc(x)\leq 0,$
		and for each $(x, u_d) \in \Delta_d$, we have
		$\Gamma_d(x, u_d) + (1 - \sigma)\rd(x) \leq 0.$
	}{
		$$\Gamma_c(x, u_c) + (1 - \sigma)\rc(x)\leq 0,$$
	and for each $(x, u_d) \in \Delta_d$,
	$$\Gamma_d(x, u_d) + (1 - \sigma)\rd(x) \leq 0.$$
}
	Then, since the functions $\rc$ and $\rd$ have positive values on $\Ir$ and $\Ls$, respectively, and $\sigma \in (0,1)$, for every $x \in \Mc$ (every $x \in \Md$), there exists $u_c \in \uxc(x)$ (exists $u_d \in \Ld(x)$) such that $\Gamma_c(x,u_c) < 0$ (respectively, $\Gamma_d(x,u_d) < 0$).
	Then, by the convexity of functions $\Gamma_c$ and $\Gamma_d$ in condition \ref{item:weakExist2} and of values of the set-valued maps $\uxc$ and $\Ld$ in \ref{item:weakExist1}, we have that the maps $\wS_c$ and $\wS_d$ have convex values on $\Mc$ and $\Md$, respectively.
	
	Then, to use \cite[Lemma 4.2]{75} for deriving regulation maps that are also lower semicontinuous, for each $x \in \reals^n$, we define the set-valued maps
	\IfConf{
	\begin{align}\label{eq:Sc}
		S_\star(x) : =
			\begin{cases}
				\wS_\star(x) & \ifeq x \in M_\star, \\
				\reals^{m_\star} & \otherw,
			\end{cases}
	\end{align}
with $\star \in \{c, d\}.$
}{
	\begin{align}\label{eq:Sc}
	S_c(x) : =
	\begin{cases}
		\wS_\star(x) & \ifeq x \in \Mc, \\
		\reals^{m_c} & \otherw,
	\end{cases}
\end{align}
	\begin{align}\label{eq:Sd}
		S_d(x) : =
		\begin{cases}
			\wS_d(x) & \ifeq x \in \Md,\\
			\reals^{m_d} & \otherw.
		\end{cases}
	\end{align}
}
	In addition, $S_c$ and $S_d$ also have nonempty and convex values due to the nonemptiness and convex-valued properties of $\wS_c$ and $\wS_d$.
	
	Now, according to Michael's Selection Theorem, namely, Theorem~\ref{thm:mselection}, there exist continuous functions $\widetilde{\kappa}_c: \reals^n \rightarrow \reals^{m_c}$ and $\widetilde{\kappa}_d: \reals^n \rightarrow \reals^{m_d}$ such that, for all $x \in \reals^n$,
	$$\widetilde{\kappa}_c(x) \in \overline{S_c(x)}, \qquad \widetilde{\kappa}_d(x) \in \overline{S_d(x)}.$$
	\IfConf{
	Now, with $\star \in \{c, d\}$, we define functions $\kappa_\star: \reals^n \rightarrow \reals^{m_\star}$ such that
	\vspace{-2mm}
		\begin{align}\label{eq:cselect}
			\kappa_\star(x) = \widetilde{\kappa}_\star(x) \in {\cal U}_\star  \qquad\forall x \in M_\star,
		\end{align}
	where the functions $\kappa_\star$ inherit the continuity of $\widetilde{\kappa}_\star$ on $M_\star$.
	}{
	Now, we define functions $\kc: \reals^n \rightarrow \reals^{m_c}$ and $\kd: \reals^n \rightarrow \reals^{m_d}$ such that
	\begin{align}\label{eq:cselect}
		\begin{split}
			&\kc(x) = \widetilde{\kappa}_c(x) \in \Uc  \qquad\forall x \in \Mc,\\
			&\kd(x) = \widetilde{\kappa}_d(x) \in \Ud  \qquad \forall x \in \Md,
		\end{split}
	\end{align}
	where the functions $\kc$ and $\kd$ inherit the continuity of $\widetilde{\kappa}_c$ and $\widetilde{\kappa}_d$ on $\Mc$ and $\Md$, respectively.
}
	Applying Lemma~\ref{lem:hbc}, the closed-loop system resulting from controlling $\widetilde{\HS}\uw$ by $\kc$ and $\kd$ in \eqref{eq:cselect} satisfies the hybrid basic conditions in Definition~\ref{def:hbc}.
	More precisely, this is because $\widetilde{\HS}\uw$ satisfies conditions (A1')-(A3') in Lemma~\ref{lem:hbc}, and the state-feedback pair $\kpair$ is continuous on $\Pc{\wC\uw}\cup \Pd{\wD\uw}$.
	With these properties and $\nabla \Ly$ being continuous, it follows that
	\begin{align*}
		&\kc(x) \in \uxc(x), \ \  \Gamma_c(x,\kc(x))\leq 0  &\forall x \in \Mc,\\
		&\kd(x) \in \Ld(x), \ \  \Gamma_d(x,\kd(x))\leq 0  &\forall x \in \Md,
	\end{align*}
	which lead to
	\IfConf{
		\begin{align}
			\begin{split}
				\sup_{\xi \in \Fuw(x,\kc(x), w_c)} &\inner{\nabla \Ly(x)}{\xi} + \sigma \rc(x) \leq  0\\
				& \hspace{1cm} \forall (x, \kc(x), w_c) \in \wC\uw,
			\end{split}\label{eq:CLF-ineqc}
		\end{align}
		\begin{align}
			\begin{split}
				\sup_{\xi \in \Guw(x,\kd(x), w_d)} \Ly(\xi) + \sigma \rd(x) - & \level \leq 0\\
				& \hspace{-12mm} \forall (x, \kd(x), w_d) \in \wD\uw.
			\end{split}\label{eq:CLF-ineqd}
		\end{align}
	\vspace{-2mm}
}{
	\begin{align}
		&\sup_{\xi \in \Fuw(x,\kc(x), w_c)} \inner{\nabla \Ly(x)}{\xi} + \rc(x) \leq 0 & \forall (x, \kc(x), w_c) \in \wC\uw,\label{eq:CLF-ineqc}\\
		&\sup_{\xi \in \Guw(x,\kd(x), w_d)}
			\Ly(\xi) + \rd(x) - \level \leq 0  & \forall (x, \kd(x), w_d) \in \wD\uw.\label{eq:CLF-ineqd}
	\end{align}
}

	The state feedback laws $\kc$ and $\kd$ can be extended -- not necessarily continuously -- to every point in $\Pc{\Cuw}$ and $\Pd{\Duw}$, respectively, by selecting values from the nonempty sets $\uxc(x)$ for every $x \in \Pc{\Cuw}$ and $\Ld(x)$ for every $x \in \Pd{\Duw}$.
	
	To complete the proof, we establish the robust controlled forward pre-invariance of $\Mr$.
	For this purpose, we apply Theorem~\ref{prop:Ly_pre} to the closed-loop system of $\Huw$ controlled via the extended state-feedback pair $\kpair$ that is defined on $\Pc{\Cuw} \cup \Pd{\Duw}$.
	Relationships \eqref{eq:CLF-ineqc} and \eqref{eq:CLF-ineqd} imply
	\IfConf{
		$\inner{\nabla \Ly(x)}{\xi} \leq 0$
		for every $(x, w_c) \in (\Ir \times \Wc) \cap \Ccl, \xi \in \Fcl(x, w_c)$, and
		$\Ly(\xi) \leq \level$
		for every $(x, w_d) \in (\Ls \times \Wd) \cap \Dcl, \xi \in \Gcl(x, w_d),$
		}{
	\begin{align*}
		&\inner{\nabla \Ly(x)}{\xi} \leq 0  &\forall &(x, w_c) \in (\Ir \times \Wc) \cap \Ccl, \xi \in \Fcl(x, w_c)\\
		&\Ly(\xi) \leq \level &\forall &(x, w_d) \in (\Ls \times \Wd) \cap \Dcl, \xi \in \Gcl(x, w_d),
	\end{align*}
}
	respectively.
	Thus, it is the case that \eqref{eq:ly1} and \eqref{eq:ly2} hold for the resulting closed-loop system.
	Moreover, since $\kd(x) \in \Ld(x)$ for every $x \in \Md$, \eqref{eq:rld} implies \eqref{eq:lyJump} for $\Hcl$.
	Hence, according to Definition~\ref{def:rcFI}, the extended state-feedback pair $\kpair$ renders the set $\Mr$ as in \eqref{eq:Mr} robustly controlled forward pre-invariant for $\Huw$.
\end{proof}

\ConfSp{-1mm}
\begin{remark}
	Item \ref{item:weakExist1} in Theorem~\ref{thm:weakExist} imposes lower semicontinuity of the mappings from state space to the input spaces at points where flows and jumps are allowed.
	For systems that does not have convex-valued $\uxc$ and $\Ld$ on $\Mc$ and $\Md$, respectively, Theorem~\ref{thm:weakExist} can still be applied if there exist nonempty, closed and convex subsets of $\uxc(x)$ and $\Ld(x)$ for every $x \in \Mc$ and $x \in \Md$, respectively, such that item \ref{item:weakExist2} holds for these subsets.
	Similar comments apply to the forthcoming results.
\end{remark}

To show existence of a state feedback pair $\kpair$ that renders $\K$ as in \eqref{eq:Mr} robustly forward invariant, we need further conditions on the regulation maps to ensure existence of a solution pair from every $\Pc{\Cuw}$.
Hence, we dedicate the remainder of this section to address, with a variation of RCLF for forward invariance in Definition~\ref{def:rclf}, the existence of a feedback pair for a class of $\Huw$ that induces robust controlled forward invariance of $\K$ by applying Theorem~\ref{prop:Ly}.
In particular, the next result resembles Theorem~\ref{thm:weakExist}, but employs different regulation maps to guarantee existence of nontrivial solution pairs and their completeness.
To this end, for every $x \in \Pc{\Cuw}$, we define the map
	\begin{align}\label{eq:rlc}
	\IfConf{
	\begin{split}
		&\Lc(x) : =\\
		&\hspace{2mm}\begin{cases}
			\{u_c \in \uxc(x): &\!\!\!\! \Fuw(x,u_d, 0) \cap T_{\Pc{\Cuw}}(x) \neq \emptyset\} \\
			&\forall x \in \partial \Pc{\Cuw} \setminus \Pd{\Duw}\\
			\uxc(x) & \otherw.
		\end{cases}
	\end{split}
		}{
		\Lc(x) : = \begin{cases}
			\{u_c \in \uxc(x): \Fuw(x,u_d, 0) \cap T_{\Pc{\Cuw}} \neq \emptyset\} & \forall x \in \partial \Pc{\Cuw} \setminus \Pd{\Duw}\\
			\uxc(x) & \otherw.
		\end{cases}
	}
	\end{align}
\begin{theorem}(existence of state-feedback pair for robust controlled forward invariance using RCLF for forward invariance) \label{thm:exist}
	Consider a hybrid system $\Huw = \datauw$ as in \eqref{eq:Huw} satisfying conditions \ref{item:A1p}-\ref{item:A3p} in Lemma~\ref{lem:hbc} and such that $\wxuc$ and $\wxud$ are locally bounded.
	Suppose there exists a pair $\clfpair$ that defines a robust control Lyapunov function for forward invariance of the sublevel sets of $\Ly$ for $\Huw$ as in Definition~\ref{def:rclf} with $\uxc$ in \eqref{eq:CLF-C} replaced by $\Lc$ as in \eqref{eq:rlc}.
	Let $\level < \rU$ satisfy \eqref{eq:CLF-rC}-\eqref{eq:CLF-D}, $\Ld$ be given as in \eqref{eq:rld}, and $\sigma \in (0,1).$
	If the following conditions hold:
	\begin{enumerate}[label = \ref{thm:exist}.\arabic*),leftmargin=12mm]
		\item\label{item:exist1} The set-valued maps $\Lc$ and $\Ld$ are lower semicontinuous, and $\Lc$ and $\Ld$ have nonempty, closed, and convex values on the set $\Pc{\Cuw}$ and the set $\Md$, respectively;
		\item\label{item:exist2} For each $x \in \Mc$, the function $u_c \mapsto \Gamma_c(x, u_c)$ in \eqref{eq:GammaC} is convex on $\Lc(x)$ and,
		for each $x \in \Md$, the function $u_d \mapsto \Gamma_d(x, u_d)$ in \eqref{eq:GammaD} is convex on $\Ld(x)$;
	\end{enumerate}
	then, the set $\Mr$ in \eqref{eq:Mr} is robustly controlled forward pre-invariant for $\Huw$ via a state-feedback pair $\kpair$ with $\kc$ being continuous on $\Mc$ and $\kd$ being continuous on $\Md$.
	Furthermore, if item~\ref{item:Ly4} in Theorem~\ref{prop:Ly} holds for the closed-loop system $\Hcl$ as in \eqref{eq:Hcl}, the pair $\kpair$ renders the set $\Mr$ robustly controlled forward invariant for $\Huw$.
\end{theorem}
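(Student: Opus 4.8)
The plan is to follow the architecture of the proof of Theorem~\ref{thm:weakExist} almost verbatim for the pre-invariance claim, substituting the restricted regulation map $\Lc$ of \eqref{eq:rlc} for $\uxc$ in the continuous-flow selection, and then to upgrade pre-invariance to invariance by verifying that the closed loop meets the additional items \ref{item:Ly1}--\ref{item:Ly4} of Theorem~\ref{prop:Ly} rather than merely the hypotheses of Theorem~\ref{prop:Ly_pre}.

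For the pre-invariance part, first I would introduce the regulation maps
\[
  \wS_c(x) := \{u_c \in \Lc(x): \Gamma_c(x,u_c) < 0\}, \qquad
  \wS_d(x) := \{u_d \in \Ld(x): \Gamma_d(x,u_d) < 0\},
\]
with $\Gamma_c,\Gamma_d$ as in \eqref{eq:GammaC}--\eqref{eq:GammaD}. Exactly as in Theorem~\ref{thm:weakExist}, the local boundedness and closed-graph properties of $\wxuc,\wxud$ together with \ref{item:A2p}--\ref{item:A3p} render $\Fuw,\Guw$ outer semicontinuous with compact images, so two applications of \cite[Proposition 2.9]{robust} yield upper semicontinuity of $\Gamma_c,\Gamma_d$ (which take value $-\infty$ off $\Delta_c,\Delta_d$). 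Combining this with the lower semicontinuity and the nonempty, closed, convex values of $\Lc,\Ld$ from \ref{item:exist1}, Corollary~\ref{coro:2.13} gives that $\wS_c,\wS_d$ are lower semicontinuous. Nonemptiness of $\wS_c$ on $\Mc$ and of $\wS_d$ on $\Md$ follows because the RCLF inequalities \eqref{eq:CLF-C}--\eqref{eq:CLF-D} (with $\uxc$ replaced by $\Lc$) force $\Gamma_c(x,u_c) \leq -(1-\sigma)\rc(x) < 0$ and $\Gamma_d(x,u_d) \leq -(1-\sigma)\rd(x) < 0$ for suitable inputs, using $\sigma \in (0,1)$ and the strict positivity of $\rc,\rd$ in \eqref{eq:CLF-rC}--\eqref{eq:CLF-rD}; convexity of the values follows from \ref{item:exist2}. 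Extending to $\reals^n$ as in \eqref{eq:Sc}, Michael's Selection Theorem (Theorem~\ref{thm:mselection}) produces continuous selections $\kc,\kd$ on $\Mc,\Md$, and Lemma~\ref{lem:hbc} makes the closed loop satisfy the hybrid basic conditions. Feeding the resulting bounds $\inner{\nabla\Ly(x)}{\xi}\leq 0$ and $\Ly(\xi)\leq\level$, together with \eqref{eq:lyJump} (which holds since $\kd(x)\in\Ld(x)$), into Theorem~\ref{prop:Ly_pre} yields robust controlled forward pre-invariance of $\Mr$.

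For the invariance claim I would instead invoke Theorem~\ref{prop:Ly}, so the remaining work is to verify its extra items for $\Hcl$. Item~\ref{item:Ly1} is automatic: if $\nabla\Ly(x)=0$ at some $x\in\Ly^{-1}(\level)\cap\Pwc{\Ccl}\subset\Ir\cap\Pc{\Cuw}$, the left-hand side of \eqref{eq:CLF-C} would equal $\rc(x)>0$, contradicting the RCLF inequality. Items~\ref{item:Ly2} and~\ref{item:Ly3} are precisely what the replacement of $\uxc$ by $\Lc$ buys us: for $x\in(\Ls\cap\partial\Pwc{\Ccl})\setminus\Pwd{\Dcl}$ the constraint $\kc(x)\in\Lc(x)$ forces $\Fuw(x,\kc(x),0)\cap T_{\Pc{\Cuw}}(x)\neq\emptyset$, hence $\Fcl(x,0)\cap T_{\Pwc{\Ccl}}(x)\neq\emptyset$ once the two flow projections are identified (see below), giving \ref{item:Ly2}; and on $\Ly^{-1}(\level)\cap\partial\Pwc{\Ccl}$ away from $\Pwd{\Dcl}$ the strict inequality $\Gamma_c(x,\kc(x))<0$ gives $\inner{\nabla\Ly(x)}{\xi}<-\sigma\rc(x)<0$ for every $\xi\in\Fuw(x,\kc(x),0)$, so the inward vector supplied by $\Lc$ also strictly decreases $\Ly$, i.e.\ $\Xi_x\neq\emptyset$, giving \ref{item:Ly3}. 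Item~\ref{item:Ly4} is assumed, and the base hypotheses of Theorem~\ref{prop:Ly} ($\Ccl$ closed, \ref{item:A2}, and $(x,0)\in\Cw$ on $\Pwc{\Ccl}$) are inherited from \ref{item:A1p}--\ref{item:A2p} through Lemma~\ref{lem:hbc} and from the admissibility of $w_c=0$ implicit in evaluating $\Lc$ and \eqref{eq:CLF-C} at the zero disturbance. Theorem~\ref{prop:Ly} then gives robust forward invariance of $\Mr$ for $\Hcl$, hence robust controlled forward invariance for $\Huw$ via $\kpair$ by Definition~\ref{def:rcFI}.

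The main obstacle I anticipate is the bookkeeping around the two flow projections: the tangent-cone condition built into $\Lc$ is phrased with $T_{\Pc{\Cuw}}$, whereas Theorem~\ref{prop:Ly} tests $T_{\Pwc{\Ccl}}$ of the closed loop, and one must check that these agree along the relevant boundary once the continuous selection $\kc$ is fixed, so that $\Pwc{\Ccl}=\Pc{\Cuw}$ on the set where tangency is tested. Verifying this identification, together with confirming that the zero disturbance is admissible wherever $\Lc$ and \eqref{eq:CLF-C} are evaluated, is the delicate point; everything else is a transcription of the Theorem~\ref{thm:weakExist} argument augmented by the three short verifications above.
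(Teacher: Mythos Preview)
Your proposal is correct and hits all the key verification steps (Michael selection from the $\Lc$-based regulation map for pre-invariance, then checking items~\ref{item:Ly1}--\ref{item:Ly4} for Theorem~\ref{prop:Ly}), but the paper packages the pre-invariance half differently. Rather than re-running the Theorem~\ref{thm:weakExist} argument with $\Lc$ substituted for $\uxc$, the paper introduces an auxiliary hybrid system $\wHS\uw$ obtained from $\Huw$ by shrinking the flow set to $\wC\uw=\{(x,u_c,w_c)\in\Cuw:u_c\in\Lc(x)\}$, shows $\wC\uw$ is closed, and then applies Theorem~\ref{thm:weakExist} to $\wHS\uw$ as a black box. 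This forces an extra step you avoid: a contradiction argument showing that every solution pair of the closed loop of $\Huw$ is also a solution pair of the closed loop of $\wHS\uw$, so that pre-invariance transfers back. The paper also explicitly invokes the Dugundji extension theorem (Theorem~\ref{thm:4.1}) to extend $\kc$ continuously from $\Mc$ to all of $\reals^n$ before applying Theorem~\ref{prop:Ly}; your Michael selection on the $\reals^{m_c}$-valued extension already produces a globally continuous selection, so you sidestep that step too. The ``bookkeeping'' obstacle you flag about $T_{\Pc{\Cuw}}$ versus $T_{\Pwc{\Ccl}}$ is precisely what the paper's auxiliary-system and solution-transfer argument is absorbing; either route resolves it, and your direct approach is arguably cleaner at the cost of reproving rather than citing Theorem~\ref{thm:weakExist}.
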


\begin{proof}
	The robust forward pre-invariance of $\Mr$ for $\Huw$ follows from a direct application of Theorem~\ref{thm:weakExist}.
	More precisely, when conditions in Theorem~\ref{thm:exist} hold, every condition in Theorem~\ref{thm:weakExist} holds for a hybrid system $\wHS\uw$ that has flow map, jump map, and jump set given as $\Fuw$, $\Guw$, and $\Duw$, respectively, and flow set given by
	$$\widetilde{C}\uw = \{(x, u_c, w_c) \in \Cuw: u \in \Lc(x) \}.$$
	The set $\widetilde{C}\uw$ is closed.
	We show this by considering the sequence $(x_i, u_i, w_i) \in \widetilde{C}\uw$, for every $i$, converges to $(x,u,w)$, which is in $\Cuw$ since $\Cuw$ is closed.
	By definition of $\widetilde{C}\uw$, $u_i \in \Lc(x_i)$ for every $i$.
	Because $\Lc$ has closed values, $u \in \Lc(x)$.
	Hence, $(x, u, w) \in \widetilde{C}\uw$.
	Applying Theorem~\ref{thm:weakExist}, there exists a state-feedback pair $\kpair$ that renders $\Mr$ robustly controlled forward pre-invariant for $\wHS\uw$ with $\kc$ and $\kd$ being continuous on $\Mc$ and $\Md$, respectively.
	Since for every $x \in \Pc{\Cuw}$, such $\kc(x) \in \Lc(x) \subset \uxc(x)$, this implies such pair $\kpair$ is also $\Huw-$ admissible.
	Moreover, every solution pair to the closed-loop system resulting from $\Huw$ controlled by $\kpair$, i.e., $\Hw$, is also a solution pair to the closed-loop system of $\wHS\uw$ controlled by the same pair $\kpair$, i.e, $\wHS_w$.
	We show this via contradiction.
	Suppose there exist a solution pair $(\phi^*, w^*) \in \sol_{\Hw}$ such that $(\phi^*, w^*) \notin \sol_{\wHS_w}$.
	Since $\wHS\uw$ and $\Huw$ share the same jump map and jump set, if $\phi^*$ is pure discrete, then $(\phi^*, w^*)$ is also a solution pair to $\Huw$.
	In the case that $\phi^*$ is not pure discrete, by item~\ref{item:Sw1} of Definition~\ref{def:solution} and the fact that $\wHS\uw$ and $\Huw$ share the same flow map, there exists $j^\star$ with $I^{j^\star}$ with nonempty interior, such that
	\begin{align}
		&(\phi^*(t,j^\star), w^*(t,j^\star)) \in \Cw \label{eq:H1C} \\
		&(\phi^*(t,j^\star), w^*(t,j^\star)) \notin \wC_w.\label{eq:H2C}
	\end{align}
	Utilizing the projection maps introduced in Section~\ref{sec:HS} near \eqref{eq:wxu}, \eqref{eq:H1C} implies $\phi^*(t,j^\star) \in \Pwc{\Cw}$ and
	$$w^*(t,j^\star) \in \wxuc(\phi^*(t,j^\star), \kc(\phi^*(t,j^\star))).$$
	By definition of $\wC\uw$, $\Pwc{\wC_w} = \Pwc{\Cw}$, hence, together with \eqref{eq:H2C}, it must be that
	$$w^*(t,j^\star) \notin \{w_c : (\phi^*(t,j^\star), \kc(\phi^*(t,j^\star), w_c) \in \wC\uw\},$$
	which leads to the contradiction to the fact that $\wC\uw \subset \Cuw$.
	Hence, such $\kpair$ renders $\Mr$ robustly controlled forward pre-invariant for $\Huw$.

	According to Theorem~\ref{thm:4.1}, since the set $\Mc$ is closed, there exists a continuous extension of $\kc$ from $\Ir \cap \Pc{\Cuw}$ to $\reals^n$ with $\kc(x) \in \reals^m$ for every $x \in \inter \Ls \cap \Pc{\Cuw}$.\footnote{Note that the selected $\kc$ in proof of Theorem~\ref{thm:weakExist} is not necessarily continuous on $\Pc{\Cuw}$.}
	Then, applying such pair $\kpair$, with $\kc$ and $\kd$ being continuous on $\Ls \cap \Pc{\Cuw}$ and $\Md$, respectively, Lemma~\ref{lem:hbc} implies the closed-loop system is such that $\Fcl$ is outer semicontinuous, locally bounded and has nonempty and convex values on $(\Mr \times \Wc) \cap \Cw$.
	Hence, item~\ref{item:A2} in Definition~\ref{def:hbc} holds for closed-loop system $\wHS$.
	Then, applying Theorem~\ref{prop:Ly}, we show that the pair $\kpair$ renders set $\Mr$ robustly controlled forward invariant for $\wHS$.
	For every $x \in \Pwc{\Ccl},$ $(x, 0) \in \Cw$ by assumption.
	Inequalities \eqref{eq:ly1} and \eqref{eq:ly2} follow from \eqref{eq:CLF-ineqc} and \eqref{eq:CLF-ineqd} for the given pair $\clfpair$.
	Next, \eqref{eq:CLF-ineqc} implies condition \ref{item:Ly1}.
	Condition \ref{item:Ly2} follows from the definition of $\Lc$ in \eqref{eq:rlc}.
	Since \eqref{eq:CLF-ineqc} and the fact that $\rc(x)$ is positive for every $x \in M_c$, $\inner{\nabla V(x)}{\xi} < 0$, for every $x \in M_c$ and $\xi \in \Fuw(x, \kc(x), 0)$.
	Then, \eqref{eq:Ms} and \eqref{eq:rlc} together implies the feedback $\kc(x)$ selected from $\Lc(x)$ for every $x \in M_c$ are such that $\Fcl(x, 0) \cap T_{\Pwc{\Ccl}}(x) \neq \emptyset.$
	Thus, item \ref{item:Ly3} holds.
	Item \ref{item:Ly4} holds by assumption.
	The definition of $\Ld$ in \eqref{eq:rld} implies \eqref{eq:lyJump} holds.
	Hence, the set $\Mr$ is robustly controlled forward invariant for $\wHS$ via the selected $\kpair.$
	Furthermore, as showed above, the pair $\kpair$ is $\Huw-$ admissible and renders the set $\Mr$ robustly controlled forward invariant for $\Huw$ by Definition~\ref{def:rcFI}.
\end{proof}

Theorem~\ref{thm:exist} uses an alternative RCLF for forward invariance that is defined based on $\Lc$ as in \eqref{eq:rlc} instead of $\uxc$ as in Definition~\ref{def:rclf}.
This RCLF leads to the existence of state-feedbacks rendering $\K$ robust controlled forward invariance for $\Huw$.
By selecting $\kc$ from the map $\Lc$ in \eqref{eq:rlc} rather than the generic map $\uxc$, we guarantee existence of nontrivial solution pairs from every $x \in \Mr \setminus \Pd{\Duw}$.
This follows from an application of Lemma~\ref{lem:Tcone} and the fact that items \ref{item:Ly1}, \ref{item:Ly3}, and \ref{item:Ly4} in Theorem~\ref{prop:Ly} hold.
Moreover, item \ref{item:Ly4} ensures completeness of every $(\phi, w) \in \sol_{\Hcl}(\Mr)$.

\begin{remark}
	Results about selecting feedbacks from regulation maps for nominal hybrid systems (without perturbations), developed using a different set conditions and notion of control Lyapunov functions for forward invariance appeared in \cite{141}; see details in \cite[Definition 4.1]{141}.
	More precisely, the results in \cite{141} are derived from sufficient conditions for forward invariance of generic sets\NotConf{\footnote{The equivalent results of Corollary~\ref{coro:rcFI} in Section~\ref{sec:rcFI}.}} and are not tailored to sublevel sets of $\Ly.$
	In particular, in \cite{141}, to guarantee that the state component of every solution pair remains in $\Mr$, the feedback law $\kc$ needs to be locally Lipschitz, see \cite[Theorem 4.7, R4)]{141}.
	To get such a property, condition \cite[Theorem 4.7, R1')]{141} asks the regulation map $\wLc$ to be locally Lipschitz, leading to $\kc$ being a Lipschitz selection.
	By exploiting results in Section~\ref{sec:rcFI-Lya}, Theorem~\ref{thm:exist} only requires $\kc$ to be a continuous selection.
\end{remark}

\begin{remark}\label{rem:relax}
	In the case where control inputs affect only the jumps, the conditions in Theorem~\ref{thm:weakExist} lead to robustly controlled forward invariance of $\Huw$, provided \eqref{eq:ly1} holds during flows.
	Similarly, when control inputs affect only the flows, the conditions involving $\Fuw$ and $\Cuw$ in Theorem~\ref{thm:weakExist}, together with \eqref{eq:ly2}, lead to robust controlled forward invariance of $\Mr$.
	In addition, the results in this section can be applied to purely continuous-time and purely discrete-time systems by defining RCLF for forward invariance only based on \eqref{eq:CLF-C} or \eqref{eq:CLF-D}, respectively.
\end{remark}

\begin{example}(Existence of continuous state-feedback control law for the bouncing ball)\label{ex:bball-exists}
	First, since 
	$$\Md = \{0\} \times [-\sqrt{2\Emax}, -\sqrt{\gamma \hmin}]$$
	and $\uxd(x) = \Ud$, \ref{item:weakExist1} in Theorem~\ref{thm:weakExist} holds for $\Huw$.
	Following the steps in Section~\ref{sec:design}, we construct the regulation map $\Gamma_d.$
	Since there is no control input during flows, we omit defining $\Gamma_c$.
	Moreover, since the input $u_d$ is only active when $(x, u_d, w_d) \in D_1$, we define the map $\Gamma_d$ based on $G_1$ only.
	Then, for $\level = -\gamma\hmin$ and for every $(x, u_d) \in \{(x, u_d) \in \reals^2 \times \Ud: (x, u_d, w_d) \in (\Ls \times \Ud \times \Wd) \cap D_1\}$, with $\sigma = \frac{1}{2}$, $\Gamma_d$ is given by
	\begin{align*}
		\Gamma_d(x, u_d)
		&= \max\limits_{w_d \in [e_1, e_2]} \Ly(G_1(x, u_d, w_d)) + \frac{\rd(x)}{2} - \level\\
		&= -\frac{(u_d - e_1x_2)^2}{2} + \gamma \left(\frac{\varepsilon}{2} + \hmin\right).
	\end{align*}
	Item \ref{item:weakExist2} in Theorem~\ref{thm:weakExist} holds since, for each $x \in \Md$, the function $u_d \mapsto \Gamma_d(x, u_d)$ is convex on $\Ld(x)$.
	For each $x \in \reals^2$, the map $S_d$ in \eqref{eq:Sc} is given by
	\begin{align}\label{eq:BSd}
		\hspace{-5mm}
		S_d(x) : =
		\begin{cases}
			\{u_d \in \Ld(x): &\!\!\!\! \gamma (\frac{\varepsilon}{2} + \hmin) -\frac{(u_d - e_1x_2)^2}{2} < 0\} \\
			&\ifeq x \in \Ls \cap \Pd{D_1}, \\
			\reals & \otherw.
		\end{cases}
	\end{align}
	In addition, $\Huw$ given in \eqref{eq:Bplant} satisfies conditions \ref{item:A1p} - \ref{item:A3p} in Lemma~\ref{lem:hbc}.
	According to Theorem~\ref{thm:weakExist}, there exists a state feedback $\kd : \reals^2 \rightarrow \reals$ that is continuous on $\Md$.
	In particular, such a feedback is selected from the closure of the map $S_d$ given in \eqref{eq:BSd}, which reduces to an interval:
	\IfConf{
		\begin{align}\label{eq:BSdcl}
			\overline{S_d}(x): = \left[\max\left\{\sqrt{2\gamma\left(\frac{\varepsilon}{2} + \hmin\right) } + e_1x_2,0\right\},\right.\\
			\left. \sqrt{2\Emax} + e_2x_2\right].
		\end{align}
	}{
		\begin{align}\label{eq:BSdcl}
			\overline{S_d}(x): = \left[\max\left\{\sqrt{2\gamma\left(\frac{\varepsilon}{2} + \hmin\right) } + e_1x_2,0\right\}, \sqrt{2\Emax} + e_2x_2\right].
		\end{align}
	}
	
	One such continuous selection is 
	\begin{align}\label{eq:Bkd}
		\kd(x) : = \sqrt{\frac{\gamma(\frac{\varepsilon}{2} + \hmin)}{\Emax}}x_2 + \sqrt{2\gamma\left(\frac{\varepsilon}{2} + \hmin\right)}.
	\end{align}
	
	\IfConf{
	Applying \cite[Theorem~4.15 and Lemma~4.12]{PartI-Arxiv}, we verify that our design of $\kd$ in \eqref{eq:Bkd} indeed renders $\Mr$ robustly controlled forward invariant for $\Huw$.
	To this end, we check the corner cases of jumps  from $\K \cap \Pd{D_1}$ and from $\K \cap \Pd{D_2}$.
}{
	Since Corollary~\ref{coro:rcFI} provides conditions guaranteeing robust controlled forward invariance for hybrid systems without a Lyapunov function, we verify that our design of $\kd$ in \eqref{eq:Bkd} indeed renders $\Mr$ robustly controlled forward invariant for $\Huw.$
	To this end, first, $\K$ is a subset of $C \cup \Pd{\Duw}$, $F$ is Lipschitz and $F(x)$ is convex on $C$ by construction and \ref{item:rcFI4} holds since $\K \cap C$ is compact.
	Then, item \ref{item:rcFI1} and \ref{item:rcFI5} hold true trivially; while item \ref{item:rcFI3} holds since \eqref{eq:BVdot} and item 1) of Lemma~\ref{lem:innerTcone}.
	Finally, for the closed-loop system with $u_d$ replaced by $\kd$ in \eqref{eq:Bkd}, we check the extreme cases for every $x \in \K \cap \Pd{D_1}$ and every $x \in \K \cap \Pd{D_2}$.
}
	More precisely, the worst case for impact with zero height is when $x$ is such that $x_2 = - \sqrt{2\gamma\hmin}$ before the impact and, after the impact, $x$ is updated by the map $G_1(x, \kd(x), e_1)$, i.e.,
	\IfConf{
	$G_1(x, \kd(x), e_1) \geq \sqrt{2\gamma\left(\frac{\varepsilon}{2} + \hmin\right)}$
	since $\sqrt{\gamma\left(\frac{\varepsilon}{2} + \hmin\right)} < e_1 \sqrt{\Emax}$.
	}{
		\begin{align}
			G_1(x, \kd(x), e_1)
			&= \sqrt{\frac{\gamma\left(\frac{\varepsilon}{2} + \hmin\right)}{\Emax}}x_2 + \sqrt{2\gamma\left(\frac{\varepsilon}{2} + \hmin\right)} - e_1x_2\\
			&= \sqrt{2\gamma\left(\frac{\varepsilon}{2} + \hmin\right)} + \left(\sqrt{\frac{\gamma\left(\frac{\varepsilon}{2} + \hmin\right)}{\Emax}} - e_1\right)(- \sqrt{2\gamma\hmin}),
		\end{align}
	which is greater than $\sqrt{2\gamma\left(\frac{\varepsilon}{2} + \hmin\right)}$ since $\sqrt{\gamma\left(\frac{\varepsilon}{2} + \hmin\right)} < e_1 \sqrt{\Emax}$.
	Then, \ref{item:rcFI2} holds for every $x \in \K \cap \Pd{D_2}$ since \eqref{eq:Bg2}.}
	
	Simulations are generated to show solutions to $\Huw$ controlled by $\kd$ in \eqref{eq:Bkd} with system parameters
	$\gamma = 9.81, \hmin = 10, \hmax = 12, \vmax = 6\sqrt{\gamma}, e_1 = 0.8, e_2 = 0.9, e_p = 0.95, \varepsilon = 0.1$, and $\delta_p = 0.01.$\footnote{All simulations in this section are generated via the Hybrid Equations (HyEQ) Toolbox for MATLAB; see \cite{74}.
		Code available at https://github.com/HybridSystemsLab/InvariantBoucingBall
	and at https://github.com/HybridSystemsLab/InvariantPointMass}
	Over the simulation horizon, the disturbance $w_d$ is randomly generated within interval $[e_1, e_2]$, and updated after each impact.
	One solution that starts from the initial condition for $x(0,0) = (11, 0)$ is shown in Figure~\ref{fig:bball1}.
	Figure~\ref{fig:bball1}(a) presents the randomly generated disturbance $w_d$ for $\Huw$.
	Moreover, even under the effect of the disturbance, as desired, the peaks of the resulting height reach values larger than $\hmin$ and smaller than $\hmax$ as Figure~\ref{fig:bball1}(a) shows.
	Figure~\ref{fig:bball1}(b) shows, on the $(x_1, x_2)$ plane, that the solution stays within the set $\K$ for all time, which is the region bounded by dark green dashed line.
	\hfill $\triangle$
\end{example}

\ConfSp{-3mm}
		\setlength{\unitlength}{0.32\textwidth}
	\begin{figure}[h]
		\scriptsize
		\begin{subfigure}[t]{0.5\textwidth}
			\centering
			\includegraphics[trim = {0 4.5cm 0 0},clip,width = \unitlength]{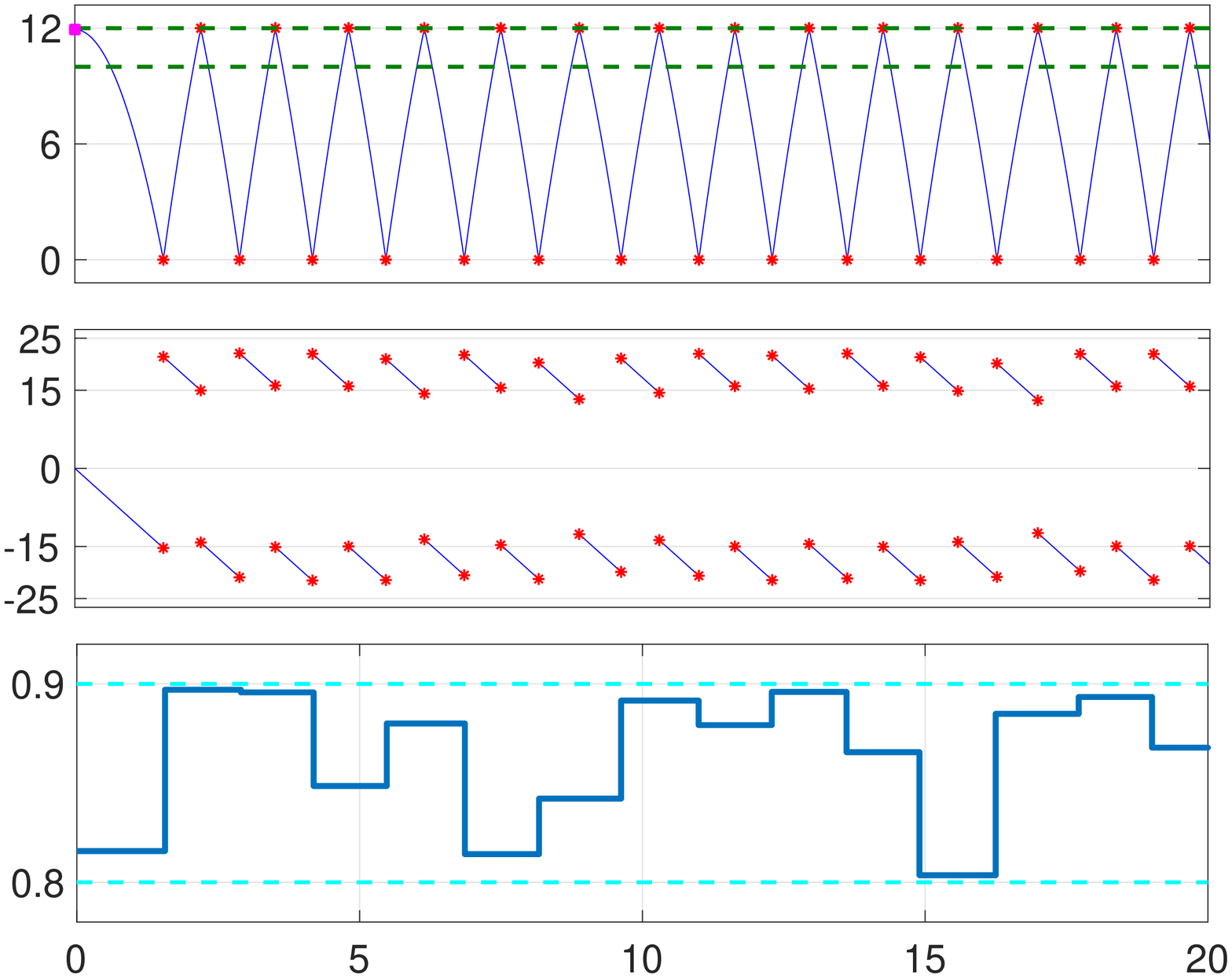}
			\put(-.02, .78){$\hmax$}
			\put(-.02, .74){$\hmin$}
			\put(-1.03, .7){$x_1$}
			\put(-1.03, .43){$x_2$}
			\put(-0.02, .26){$e_2$}
			\put(-0.02, .1){$e_1$}
			\put(-1.03, .18){$w_d$}
			\put(-.54, -.01){$t$ [sec]}
			\caption{Height and velocity of the ball and $w_d$.}
			\label{fig:sig1}
		\end{subfigure}
		\begin{subfigure}[t]{0.5\textwidth}
			\centering
			\includegraphics[trim = {0 4.5cm 0 0},clip,width = \unitlength]{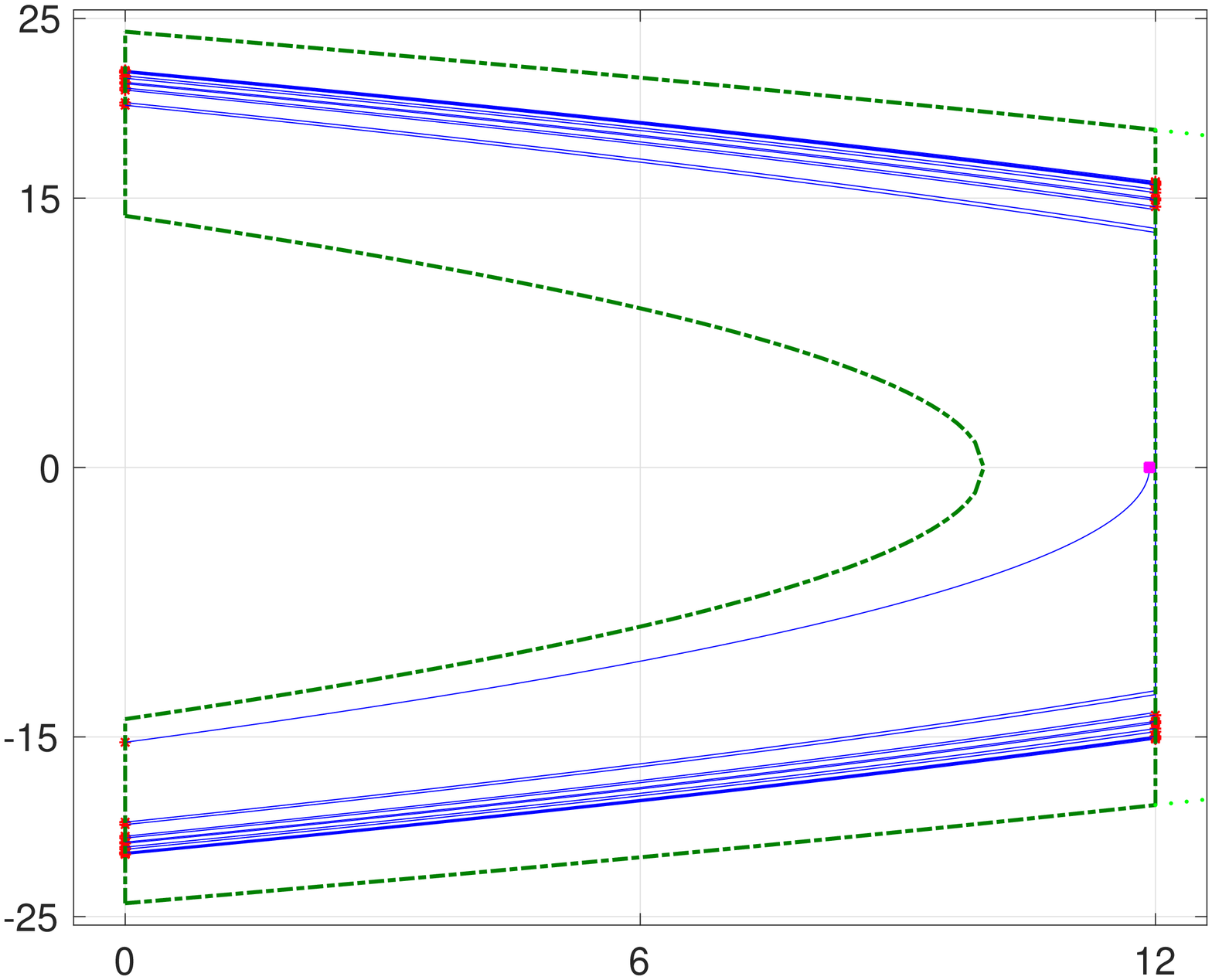}
			\put(-1.06,.43){$x_2$}
			\put(-.5,-.03){$x_1$}
			\put(-.2,.3){$\K$}
			\put(-.18,.46){(11,0)}
			\caption{State component of closed-loop solution on the $(x_1, x_2)$ plane.}
			\label{fig:tball1}
		\end{subfigure}
		\caption{\label{fig:bball1} Simulation of $\Huw$ controlled by $\kd$ in \eqref{eq:Bkd}.}
	\end{figure}
\ConfSp{-3mm}

	In the next example, we apply results in this section to design an invariance-based controller for the robotic manipulator introduced in Example~\ref{ex:hmass-rclf}.
	\begin{example}(Existence of continuous feedback control law for the robotic manipulator)
		Consider the system $\Huw$ in Example~\ref{ex:hmass-rclf}.
		For this system, the set $\Mc$ in \eqref{eq:Ms} is equal to $\Ir$.
		Furthermore, since $\partial \Pc{\Cu} \setminus D = \emptyset$, for every $x \in \Mc$, we have $\Lc(x) = \Psi_c(x)$.
		Thus, item~\ref{item:exist1} in Theorem~\ref{thm:exist} holds.
		Next, we construct $\Gamma_c$ and the regulation map following the steps in Section~\ref{sec:rclf-FI}.\footnote{Due to the absence of control inputs during jumps, we omit defining $\Gamma_d$.}
		For $\level < \rU = b\overline{v}^2$ and for every $(x, u_c) \in \{(x, u_c) \in \reals^2 \times \Uc: x \in \Ls\}$, with $\sigma = \frac{1}{2}$, $\Gamma_c$ is given by
		\IfConf{
			$\Gamma_c(x, u_c) = \max\limits_{\xi \in F_u(x,u_c)} \inner{\nabla \Ly(x)}{\xi} - \frac{1}{2} \rc(x).$
		}{
		\begin{align}\label{eq:raGc}
			\Gamma_c(x, u_c) = \max\limits_{\xi \in F_u(x,u_c)} \inner{\nabla \Ly(x)}{\xi} - \frac{1}{2} \rc(x).
		\end{align}
	}
		As presented in Example~\ref{ex:hmass-rclf}, when \eqref{eq:detQ} and \eqref{eq:trQ} hold, the continuous feedback law
		\begin{align}\label{eq:hmass-law}
			\kc(x) = -k_p x_1 - k_d x_2
		\end{align}
		renders the set $\K$ in \eqref{eq:raK} robust controlled forward invariant for $\Huw$ therein.
		The existence of such continuous feedback follows from Theorem~\ref{thm:weakExist} since, for each $x \in M_c$, $u_c \mapsto \Gamma_c(x,u_c)$ is convex on $\Lc(x)$ and $\Huw$ satisfies conditions \ref{item:A1p} - \ref{item:A3p} in Lemma~\ref{lem:hbc}.
		Next, we design the gain of such a feedback law to satisfy \eqref{eq:detQ} and \eqref{eq:trQ}.
		Consider $\level = \frac{4}{5}\rU = \frac{4}{5}b\overline{v}^2$ and the RCLF, i.e., $\Ly$ in \eqref{eq:K-arm}, that is defined with
		$P = \begin{bmatrix}5 &1\\ 1 & 2\end{bmatrix}$.
		The working environment has parameters $k_c = 0.1$ and $b_c = 0.02$, the velocity threshold is $\overline{v} = 0.6$, the coefficient of restitution parameters are $e_2 = 0.9$ and $e_1 = 0.8$, and the maximum allowed input is $f_{max} = 10$.
		We simulate several solutions to $\Huw$ controlled by $\kc$ given in \eqref{eq:hmass-law} with gain $k = [-0.5 \ -2].$
		\ConfSp{-2mm}
		\setlength{\unitlength}{0.38\textwidth}
		\begin{figure}[H]
			\scriptsize
			\centering
			\includegraphics[width = \unitlength]{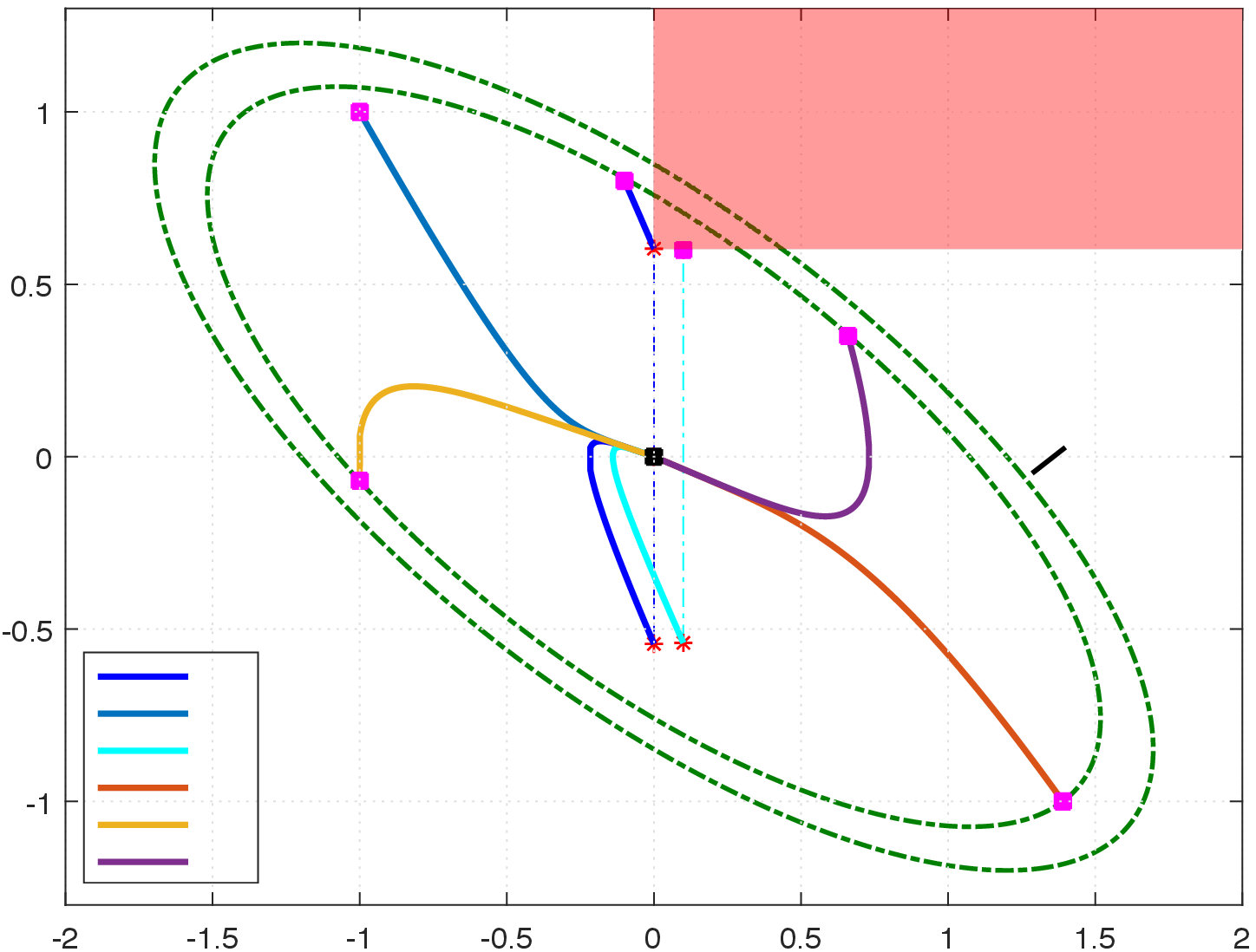}
			\put(-0.84, .215){$s_1$}
			\put(-0.84, .185){$s_2$}
			\put(-0.84, .155){$s_3$}
			\put(-0.84, .125){$s_4$}
			\put(-0.84, .095){$s_5$}
			\put(-0.84, .065){$s_6$}
			\put(-0.4, .16){$\K$}
			\put(-0.14, .62){$\Pi(D_w)$}
			\put(-0.18, .45){$\Ly^{-1}(\rU)$}
			\put(-1.03, .4){$x_2$}
			\put(-.5, -.02){$x_1$}
			\caption{Simulation of $\Huw$ in Example~\ref{ex:hmass-model} controlled by $\kc$ in \eqref{eq:hmass-law}.}
			\label{fig:hmass}
		\end{figure}
		\ConfSp{-2mm}
		\noindent As shown in Figure~\ref{fig:hmass}, the inner green dash line is the boundary of set $\K$ and the outer green dash line is the $\rU$-level set of $\Ly$.
		Six solutions are shown in Figure~\ref{fig:hmass}.
		Each solution starts with an initial condition (labeled as square pink points) that is within the set $\K$ and converges to the origin (labeled as a square black point) in the limit.
		Solutions labeled $s_1$ and $s_3$ exhibit jumps when the trajectory reach set $D$ (the shaded red square), and the jumps are represented with red stars and dotted lines that match the color of each solution.
		Note that all solutions stay within the set $\K$, as expected.
		\hfill $\triangle$
	\end{example}

\subsection{Systematic Design of Pair $\kpair$ for Robust Controlled Forward Invariance}
\label{sec:rselect}
Inspired by the pointwise minimum norm results in \cite{robust} and \cite[Theorem 5.1]{135}, we construct state-feedback pairs rendering the set $\Mr$ as in \eqref{eq:Mr} robust controlled forward invariant.
We employ Theorem~\ref{thm:weakExist} to show that the resulting closed-loop has the desired property.

For a given pair $\clfpair$ defining a RCLF for forward invariance as in Definition~\ref{def:rclf}, we first construct appropriate functions $\Gamma_c, \Gamma_d$ and regulation maps $S_c, S_d$ in Section~\ref{sec:design}.
When \ref{item:weakExist2} in Theorem~\ref{thm:weakExist} holds, $u_c \mapsto \Gamma_c(x, u_c)$ is convex on $\uxc(x)$ for every $x \in \Mc$, and $u_d \mapsto \Gamma_d(x, u_d)$ is convex on $\Ld(x)$ for every $x \in \Md$.
Hence, the maps $S_c$ and $S_d$ have nonempty and convex values on $\reals^n$.
According to \cite[Theorem 4.10]{rudin1987real}, for every $x \in \Lss \cap \Pc{\Cuw}$ and $x \in \Lss \cap \Pd{\Duw}$, respectively, the closure of $S_c(x)$ and $S_d(x)$, i.e., $\overline{S_c(x)}$ and $\overline{S_d(x)}$, have unique element of minimum norm.
Thus, we construct the state-feedback laws $\kmc : \Lss \cap \Pc{\Cuw} \rightarrow \Uc$ and $\kmd : \Lss \cap \Pd{\Duw} \rightarrow \Ud$ as
\begin{align}\label{eq:minSelect}
	\begin{split}
			\kmc(x) := \argmin\limits_{u_c \in \overline{S_c(x)}} |u_c| \qquad \forall &x \in \Lss \cap \Pc{\Cuw},\\
			\kmd(x) := \argmin\limits_{u_d \in \overline{S_d(x)}} |u_d| \qquad \forall &x \in \Lss \cap \Pd{\Duw}.
	\end{split}
\end{align}
Moreover, such state-feedback pair enjoys continuity when the maps $\uxc$ and $\Ld$ satisfy \ref{item:weakExist1}.
We capture these in the following result.
\begin{theorem}(pointwise minimum norm state-feedback laws for robust controlled forward pre-invariance) \label{thm:premin}
	Consider a hybrid system $\Huw$ as in \eqref{eq:Huw} satisfying conditions \ref{item:A1p}-\ref{item:A3p} in Lemma~\ref{lem:hbc}.
	Suppose there exists a pair $\clfpair$ that defines a robust control Lyapunov function for forward invariance of $\Huw$ as in Definition~\ref{def:rclf}.
	Let $\level < \rU$ satisfy \eqref{eq:CLF-rC}-\eqref{eq:CLF-D} and $\Ld$ be given as in \eqref{eq:rld}.
	Furthermore, suppose conditions \ref{item:weakExist1} and \ref{item:weakExist2} in Theorem~\ref{thm:weakExist} hold.
	Then, the state-feedback pair $(\kmc, \kmd)$ given as in \eqref{eq:minSelect} renders the set $\Mr$ in \eqref{eq:Mr} robustly controlled forward pre-invariant for $\Huw$.
	Moreover, $\kmc$ and $\kmd$ are continuous on set $\Mc$ and $\Md$ as in \eqref{eq:Ms}, respectively.
\end{theorem}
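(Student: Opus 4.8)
The plan is to show that the pointwise minimum‑norm pair $(\kmc,\kmd)$ is a legitimate continuous selection from the very regulation maps built in the proof of Theorem~\ref{thm:weakExist}, and then to invoke that theorem's pre‑invariance conclusion verbatim. First I would recall that, under \ref{item:weakExist1} and \ref{item:weakExist2}, the maps $\wS_c$ and $\wS_d$ in \eqref{eq:Sstar}, hence their extensions $S_c$ and $S_d$ in \eqref{eq:Sc}, are lower semicontinuous with nonempty convex values, and that $\Gamma_c$ and $\Gamma_d$ in \eqref{eq:GammaC}--\eqref{eq:GammaD} are upper semicontinuous. Since $S_c(x)$ and $S_d(x)$ are convex, their closures $\overline{S_c(x)}$ and $\overline{S_d(x)}$ are nonempty, closed and convex, so \cite[Theorem 4.10]{rudin1987real} guarantees a unique element of minimum norm; thus $\kmc$ and $\kmd$ in \eqref{eq:minSelect} are well defined single‑valued maps.

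Next I would establish the pre‑invariance. Because $\uxc(x)$ and $\Ld(x)$ are closed (condition \ref{item:weakExist1}) and $\Gamma_c,\Gamma_d$ are upper semicontinuous, every point of $\overline{S_c(x)}$ lies in $\uxc(x)$ and satisfies $\Gamma_c(x,\cdot)\le 0$, and likewise every point of $\overline{S_d(x)}$ lies in $\Ld(x)$ with $\Gamma_d(x,\cdot)\le 0$. Consequently $\kmc(x)\in\uxc(x)$ with $\Gamma_c(x,\kmc(x))\le 0$ on $\Mc$, and $\kmd(x)\in\Ld(x)$ with $\Gamma_d(x,\kmd(x))\le 0$ on $\Md$. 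These are exactly the inequalities \eqref{eq:CLF-ineqc}--\eqref{eq:CLF-ineqd} obtained in Theorem~\ref{thm:weakExist}, from which \eqref{eq:ly1}--\eqref{eq:ly2} hold for the closed‑loop system, while $\kmd(x)\in\Ld(x)$ together with \eqref{eq:rld} yields \eqref{eq:lyJump}. Applying Theorem~\ref{prop:Ly_pre} then renders $\Mr$ in \eqref{eq:Mr} robustly controlled forward pre‑invariant for $\Huw$, precisely as in the last step of the proof of Theorem~\ref{thm:weakExist}.

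The main work, and the expected obstacle, is the continuity of the minimum‑norm selections on $\Mc$ and $\Md$; I would argue it for $\kmc$ (the argument for $\kmd$ being identical). Fix $x_0\in\Mc$, set $\rho(x):=\min_{u\in\overline{S_c(x)}}|u|=|\kmc(x)|$, and note that the closure of a lower semicontinuous map is lower semicontinuous, so $\overline{S_c}$ is lower semicontinuous. For any $x_k\to x_0$ I can pick $u_k\in\overline{S_c(x_k)}$ with $u_k\to\kmc(x_0)$, giving $\rho(x_k)\le|u_k|\to\rho(x_0)$, so $\rho$ is upper semicontinuous and locally bounded. Moreover the graph of $x\mapsto\overline{S_c(x)}$ is locally closed, since $u\in\overline{S_c(x)}$ means $u\in\uxc(x)$ and $\Gamma_c(x,u)\le 0$, and both are preserved under limits by closedness of $\Cuw$ (hence of $\gph\uxc$) and upper semicontinuity of $\Gamma_c$. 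Now, for $x_k\to x_0$, local boundedness lets me extract $\kmc(x_{k_j})\to\bar u$; the closed graph gives $\bar u\in\overline{S_c(x_0)}$, whence $\rho(x_0)\le|\bar u|=\lim_j\rho(x_{k_j})$, which together with upper semicontinuity forces $\rho$ continuous and $|\bar u|=\rho(x_0)$. Uniqueness of the minimum‑norm element then gives $\bar u=\kmc(x_0)$, and since every convergent subsequence of the bounded sequence $\{\kmc(x_k)\}$ has this same limit, $\kmc$ is continuous at $x_0$. The delicate point is coordinating the lower semicontinuity of $\overline{S_c}$ (needed for upper semicontinuity of $\rho$) with the closed‑graph property inherited from the upper semicontinuity of $\Gamma_c$ (needed to trap the subsequential limit), together with uniqueness from \cite[Theorem 4.10]{rudin1987real}, to upgrade mere existence of a selection into continuity of this particular selection.
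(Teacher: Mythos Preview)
Your argument for robust controlled forward pre-invariance is exactly the paper's: show that $\kmc(x)\in\uxc(x)$ with $\Gamma_c(x,\kmc(x))\le 0$ on $\Mc$ and $\kmd(x)\in\Ld(x)$ with $\Gamma_d(x,\kmd(x))\le 0$ on $\Md$, deduce \eqref{eq:CLF-ineqc}--\eqref{eq:CLF-ineqd}, extend the pair to $\Pc{\Cuw}\cup\Pd{\Duw}$, and invoke Theorem~\ref{prop:Ly_pre}. No difference there.

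For continuity, the paper does not argue sequentially; it simply invokes Proposition~\ref{prop:2.19} (the minimal selection theorem from \cite{robust}) after noting that $\overline{S_c}$ and $\overline{S_d}$ are lower semicontinuous with nonempty closed convex values, as already established in the proof of Theorem~\ref{thm:weakExist}. Your hands-on route is essentially an attempt to reprove that proposition, and the structure (lower semicontinuity of $\overline{S_c}$ gives upper semicontinuity of $\rho$; a closed-graph property traps subsequential limits; uniqueness of the minimum-norm element finishes) is the right skeleton.

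There is, however, a genuine error in your closed-graph step. You write that ``$\Gamma_c(x,u)\le 0$ \ldots\ [is] preserved under limits by \ldots\ upper semicontinuity of $\Gamma_c$.'' Upper semicontinuity gives $\limsup_k \Gamma_c(x_k,u_k)\le \Gamma_c(x_0,u_0)$, which says nothing about $\Gamma_c(x_0,u_0)$ when $\Gamma_c(x_k,u_k)\le 0$; preservation of nonstrict sublevel sets under limits requires \emph{lower} semicontinuity. Since the proof of Theorem~\ref{thm:weakExist} only establishes that $\Gamma_c,\Gamma_d$ are upper semicontinuous (via \cite[Proposition~2.9]{robust}), your justification for the closed graph of $\overline{S_c}$ does not go through as stated. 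This is precisely why the paper outsources the continuity conclusion to Proposition~\ref{prop:2.19} rather than arguing it directly from the semicontinuity of $\Gamma_c$.
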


\begin{proof}
	The first claim follows from similar proof steps in Theorem~\ref{thm:weakExist}.
	In particular, since $\kmc$ and $\kmd$ are selected from the closure of $S_c$ and $S_d$, i.e.,
	\IfConf{
	$\kmc(x) \in \overline{S_c(x)}$ and $\kmd(x) \in \overline{S_d(x)},$
}{
	$$\kmc(x) \in \overline{S_c(x)}, \quad \text{ and } \quad \kmd(x) \in \overline{S_d(x)},$$}
	it follows that
	\begin{align*}
		&\kmc(x) \in \uxc(x), \qquad \Gamma_c(x,\kmc(x))\leq 0  \qquad \forall x \in \Mc,\\
		&\kmd(x) \in \Ld(x), \qquad \Gamma_d(x,\kmd(x))\leq 0  \qquad \forall x \in \Md,
	\end{align*}
	which lead to
	\IfConf{
	\begin{align}
		\begin{split}
		\sup_{\xi \in \Fuw(x,\kmc(x), w_c)} \inner{\nabla \Ly(x)}{\xi} &+ \rc(x) \leq 0\\
		&\forall (x, \kmc(x), w_c) \in \wC\uw,\\
		\sup_{\xi \in \Guw(x,\kmd(x), w_d)} \Ly(\xi) + \rd&(x) - \level \leq 0\\
		&\forall (x, \kmd(x), w_d) \in \wD\uw.
		\end{split}\label{eq:minV}
	\end{align}
	}{
	\begin{align}
		\begin{split}
			&\sup_{\xi \in \Fuw(x,\kmc(x), w_c)} \inner{\nabla \Ly(x)}{\xi} + \rc(x) \leq 0 \qquad \forall (x, \kmc(x), w_c) \in \wC\uw,\\
			&\sup_{\xi \in \Guw(x,\kmd(x), w_d)} \Ly(\xi) + \rd(x) - \level \leq 0  \qquad \ \ \forall (x, \kmd(x), w_d) \in \wD\uw.
		\end{split}\label{eq:minV}
	\end{align}
}
	The feedback pair $(\kmc, \kmd)$ can be extended to every point in $\Pc{\Cuw}$ and $\Pd{\Duw}$, respectively, by selecting values from the nonempty sets $\uxc(x)$ for every $x \in \Pc{\Cuw}$ and $\Ld(x)$ for every $x \in \Pd{\Duw}$.
	Then, applying Theorem~\ref{prop:Ly_pre}, we establish the robust controlled forward pre-invariance of $\Mr$ for $\Huw$ via $(\kmc, \kmd)$.
	
	Finally, the continuity of $\kmc$ and $\kmd$ follow directly from Proposition~\ref{prop:2.19}.
	In particular, maps $\overline{S_c}$ and $\overline{S_d}$ are lower semicontinuous with nonempty closed convex values as shown in proof of Theorem~\ref{thm:weakExist}.
\end{proof}
\ConfSp{-2mm}
A similar result to Theorem~\ref{thm:premin} can be derived using Theorem~\ref{thm:exist} to render $\Mr$ robustly controlled forward invariant for $\Huw$ via $(\kmc, \kmd)$.
In such a case, the feedback law $\kmc$ is selected from the closure of a map $S_c$ that is defined using $\Lc$ given as in \eqref{eq:rlc} instead of using $\uxc.$
More precisely, we consider the state feedback laws $\kmc$ defined as in \eqref{eq:minSelect} with $S_c$ given by
\ConfSp{-2mm}
\begin{align}\label{eq:Scmin}
	\hspace{-2mm} S_c(x) : =
	\begin{cases}
		\{u_c\in \Lc(x):\Gamma_c(x, u_c) < 0 \} & \ifeq x \in \Mc, \\
		\reals^{m_c} & \otherw.
	\end{cases}
\end{align}
In addition to conditions \ref{item:exist1} and \ref{item:exist2} in Theorem~\ref{thm:exist}, robustly controlled forward invariance of $\Mr$ requires item \ref{item:Ly4} in Theorem~\ref{prop:Ly} to hold for the closed-loop system $\Hcl$.
We formally present such a result as follows.
\begin{theorem}(pointwise minimum norm state-feedback laws for robust controlled forward invariance) \label{thm:min}
	Consider a hybrid system $\Huw$ as in \eqref{eq:Huw} satisfying conditions \ref{item:A1p}-\ref{item:A3p} in Lemma~\ref{lem:hbc}.
	Suppose there exists a pair $\clfpair$ that defines a robust control Lyapunov function for forward invariance for $\Huw$ as in Definition~\ref{def:rclf}.
	Let $\level < \rU$ satisfy \eqref{eq:CLF-rC}-\eqref{eq:CLF-D}, $\Lc$ and $\Ld$ be given as in \eqref{eq:rlc} and \eqref{eq:rld}, respectively.
	Furthermore, suppose conditions \ref{item:exist1} and \ref{item:exist2} in Theorem~\ref{thm:exist} hold.
	Then, the state-feedback pair $(\kmc, \kmd)$ given as in \eqref{eq:minSelect} defined using $S_c$ as in \eqref{eq:Scmin} renders the set $\Mr$ in \eqref{eq:Mr} robustly controlled forward invariant for $\Huw$ if condition \ref{item:Ly4} in Theorem~\ref{prop:Ly} holds for the closed-loop system $\Hcl$.
	Moreover, $\kmc$ and $\kmd$ are continuous on the sets $\Mc$ and $\Md$ as in \eqref{eq:Ms}, respectively.
\end{theorem}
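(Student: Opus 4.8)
The plan is to merge the pointwise-minimum-norm construction of Theorem~\ref{thm:premin} with the full-invariance argument of Theorem~\ref{thm:exist}. The only structural change from Theorem~\ref{thm:premin} is that $S_c$ is built from $\Lc$ as in \eqref{eq:Scmin} rather than from $\uxc$; this is precisely the ingredient that upgrades forward pre-invariance to forward invariance, since selecting $\kmc$ from $\Lc$ forces the tangent-cone condition needed for existence and completeness of solution pairs.

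First, I would reproduce the regulation-map analysis from the proof of Theorem~\ref{thm:exist}. Under \ref{item:exist1} and \ref{item:exist2}, the maps $\Lc$ and $\Ld$ are lower semicontinuous with nonempty, closed, convex values on $\Pc{\Cuw}$ and $\Md$, while the functions $\Gamma_c, \Gamma_d$ are upper semicontinuous (from the outer semicontinuity and local boundedness of $\wxuc, \wxud, \Fuw, \Guw$) and convex in the input argument. Corollary~\ref{coro:2.13} then makes $S_c$ (defined via $\Lc$) and $S_d$ lower semicontinuous; their values are convex by \ref{item:exist2} and nonempty on $\Mc, \Md$ because $\rc, \rd$ are strictly positive on $\Ir, \Ls$ and $\sigma \in (0,1)$, exactly as in Theorem~\ref{thm:weakExist}. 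Hence $\overline{S_c(x)}$ and $\overline{S_d(x)}$ are nonempty, closed, convex subsets of $\reals^{m_c}$ and $\reals^{m_d}$, so by \cite[Theorem 4.10]{rudin1987real} each admits a unique minimum-norm element and $(\kmc,\kmd)$ in \eqref{eq:minSelect} is well defined.

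Second, since $\kmc(x) \in \overline{S_c(x)} \subset \Lc(x) \subset \uxc(x)$ and $\kmd(x) \in \overline{S_d(x)} \subset \Ld(x)$, the selections satisfy $\Gamma_c(x,\kmc(x)) \leq 0$ on $\Mc$ and $\Gamma_d(x,\kmd(x)) \leq 0$ on $\Md$, which yield the flow and jump inequalities of the form \eqref{eq:minV}. Continuity of $\kmc$ on $\Mc$ and $\kmd$ on $\Md$ then follows from Proposition~\ref{prop:2.19}, since $\overline{S_c}$ and $\overline{S_d}$ are lower semicontinuous with nonempty, closed, convex values; I would extend the pair to all of $\Pc{\Cuw}\cup\Pd{\Duw}$ by arbitrary selections from $\uxc$ and $\Ld$, as in Theorem~\ref{thm:premin}. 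To pass from pre-invariance to invariance I would then verify the hypotheses of Theorem~\ref{prop:Ly} for the closed loop, mirroring the second half of the proof of Theorem~\ref{thm:exist}: the inequalities above give \eqref{eq:ly1}--\eqref{eq:ly2}, and \eqref{eq:lyJump} holds because $\kmd(x)\in\Ld(x)$ with $\Ld$ as in \eqref{eq:rld}. The strict flow inequality with $\rc>0$ forces $\nabla\Ly(x)\neq 0$ on the level set, giving item~\ref{item:Ly1} and the strict decrease $\inner{\nabla\Ly(x)}{\xi}<0$ on $\Mc$; because $\kmc$ is selected from $\Lc$, definition \eqref{eq:rlc} guarantees $\Fcl(x,0)\cap T_{\Pwc{\Ccl}}(x)\neq\emptyset$, yielding item~\ref{item:Ly2} and, combined with the strict decrease, the nonempty set of item~\ref{item:Ly3}; item~\ref{item:Ly4} is assumed. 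Theorem~\ref{prop:Ly} then gives robust forward invariance of $\Mr$ for the closed loop, hence robust controlled forward invariance of $\Mr$ for $\Huw$ via $(\kmc,\kmd)$ by Definition~\ref{def:rcFI}.

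The main obstacle is guaranteeing that the minimum-norm selection still lies in $\Lc$, so that the tangent-cone conditions \ref{item:Ly2}--\ref{item:Ly3} survive the passage to $\overline{S_c}$. This forces the RCLF hypothesis to be read in the $\Lc$-version of \eqref{eq:CLF-C} (as in Theorem~\ref{thm:exist}), so that $S_c$ built from $\Lc$ is nonempty-valued; checking that this constrained map retains lower semicontinuity and that the minimum-norm problem on its closure is uniquely solvable and continuous in $x$ is the delicate point, whereas the remaining steps are faithful transcriptions of Theorems~\ref{thm:exist} and \ref{thm:premin}.
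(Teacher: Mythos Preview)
Your proposal is correct and follows essentially the same approach as the paper: build $S_c$ from $\Lc$ rather than $\uxc$, take the minimum-norm selection, show continuity via Proposition~\ref{prop:2.19}, and close with Theorem~\ref{prop:Ly} exactly as in the second half of the proof of Theorem~\ref{thm:exist}. The only point to tighten is your extension step: you extend $\kmc$ to $\Pc{\Cuw}\setminus\Mc$ by arbitrary selections from $\uxc$, but item~\ref{item:Ly2} of Theorem~\ref{prop:Ly} must hold on all of $(\Ls\cap\partial\Pwc{\Ccl})\setminus\Pwd{\Dcl}$, which may include points in $\inter\Ls$ outside $\Mc$; the paper therefore extends via $\Lc$ (which is nonempty-valued on $\Pc{\Cuw}$ by \ref{item:exist1}) so that the tangent-cone condition is automatic on the extension as well.
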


\begin{proof}
	The proof resembles the one for Theorem~\ref{thm:premin}.
	In particular, the selection $(\kmc, \kmd)$ given as in \eqref{eq:minSelect} defined using $S_c$ as in \eqref{eq:Scmin} leads to
	\begin{align*}
		&\kmc(x) \in \Lc(x), \qquad \Gamma_c(x,\kmc(x))\leq 0  \qquad \forall x \in \Mc,\\
		&\kmd(x) \in \Ld(x), \qquad \Gamma_d(x,\kmd(x))\leq 0  \qquad \forall x \in \Md,
	\end{align*}
	which, in turn, leads to the inequalities in \eqref{eq:minV}.
	The feedback pair $(\kmc, \kmd)$ can be extended to every point in $\Pc{\Cuw}$ and $\Pd{\Duw}$, respectively, by selecting values from the nonempty sets $\Lc(x)$ for every $x \in \Pc{\Cuw}$ and $\Ld(x)$ for every $x \in \Pd{\Duw}$.
	Then, applying Theorem~\ref{prop:Ly}, we establish robust controlled forward pre-invariance of $\Mr$ for $\Huw$ via $(\kmc, \kmd)$ with the addition of condition \ref{item:Ly4} in Theorem~\ref{prop:Ly} for the closed-loop system $\Hcl$.
	Then, the continuity of $\kmc$ and $\kmd$ follow directly from Proposition~\ref{prop:2.19}.
\end{proof}

\ConfSp{-3mm}
	Next, applying Theorem~\ref{thm:min}, a control law with minimum point-wise norm rendering the set $\K$ in \eqref{eq:BK} robustly controlled forward invariant for the bouncing ball system $\Huw$ is provided.

\begin{example}(Minimum norm selection for the bouncing ball system)
	Consider the feedback law
	$$\kmd (x) = \argmin\limits_{u_d \in \overline{S_d(x)}} |u_d|,$$
	where $\overline{S_d(x)}$ is as in \eqref{eq:BSdcl}.
	It leads to the continuous state-feedback law
	\begin{align}\label{eq:Brd}
		\kmd(x) = \max\left\{\sqrt{2\gamma \left(\frac{\varepsilon}{2} + \hmin\right)} + e_1x_2, 0\right\},
	\end{align}
	for every $x \in \K \cap \Pd{D_1}$.
	Following same steps as in Example~\ref{ex:bball-exists}, it can be shown that $\K$ in \eqref{eq:BK} is robustly controlled forward invariant for $\Huw$ via $\kmd$.
	
	Simulations are generated for $\Huw$ controlled by $\kmd$ given as in \eqref{eq:Brd} with the same system settings as in Example~\ref{ex:bball-exists}.
	One solution that starts from the same initial condition $x = (11,0)$ is shown in Figure~\ref{fig:bball0}.
	As shown in Figure~\ref{fig:bball0}(a), the peaks of the height in between impacts are between $\hmin = 10$ and $\hmax = 12$, while on the $(x_1, x_2)$ plane, the trajectory stays within the set $\K$, which is the region bounded by dark green dashed lines.
	
	As expected, compared to Figure~\ref{fig:bball1}(a), we observe in Figure~\ref{fig:bball0}(a) that there are only 7 impacts with the controlled surface within the time span of 0 to 20 seconds ; while there are 14 impacts in Figure~\ref{fig:bball1}(a) and every impact is followed with a pull.
	This indicates that less energy is used to bounce the ball at the controlled surface to maintain peak position within range $[\hmin, \hmax]$.
	This is also verified by the input values from both controllers, where the state-feedback $\kmd$ has smaller value than the controller $\kd$ in Example~\ref{ex:bball-exists}.
	\hfill $\triangle$
\end{example}

\ConfSp{-3mm}
\setlength{\unitlength}{0.32\textwidth}
\begin{figure}[H]
	\scriptsize
	\begin{subfigure}[t]{0.5\textwidth}
		\centering
		\includegraphics[trim = {0 4cm 0 0},clip,width = \unitlength]{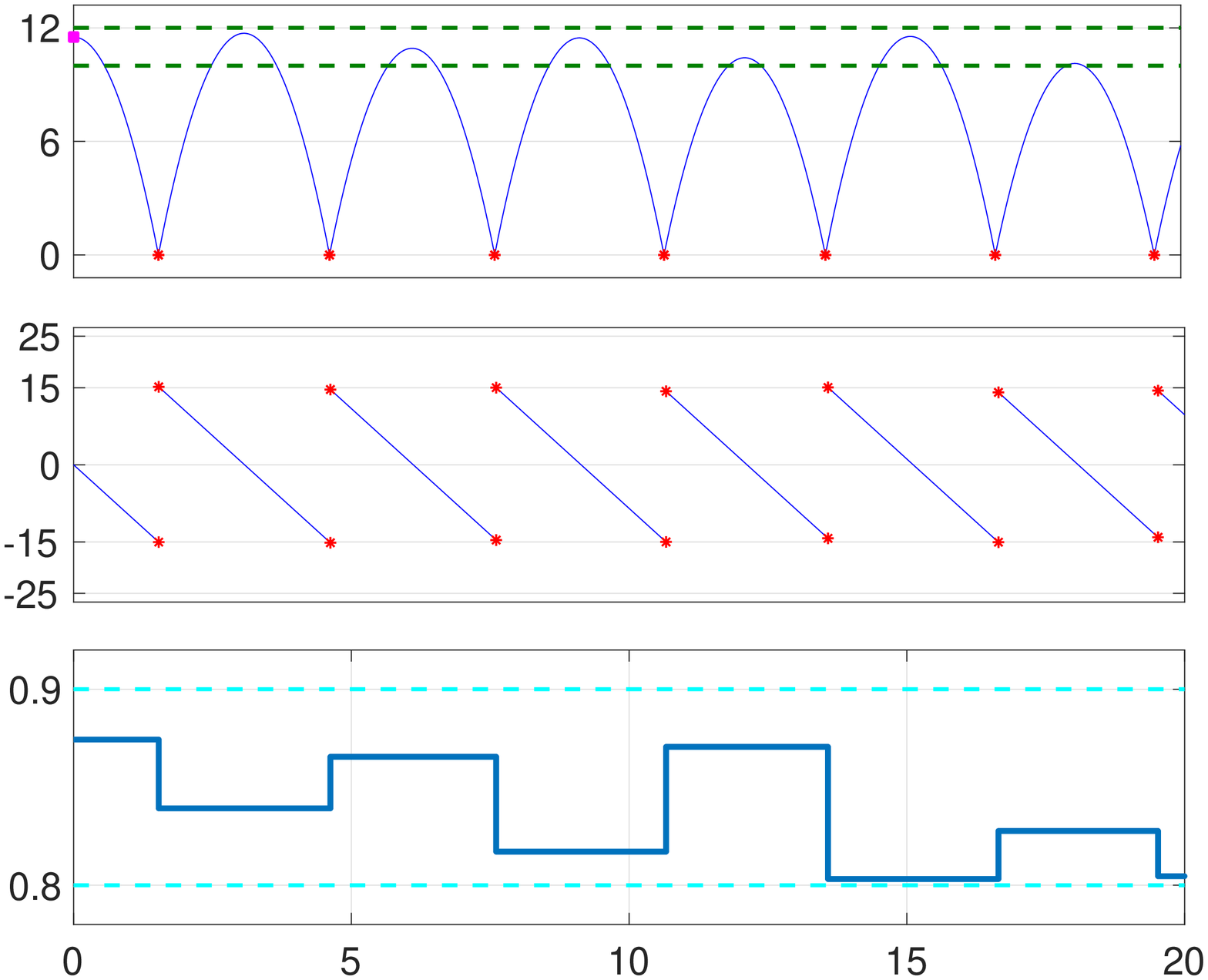}
		\put(-.02, .78){$\hmax$}
		\put(-.02, .74){$\hmin$}
		\put(-1.03, .7){$x_1$}
		\put(-1.03, .43){$x_2$}
		\put(-0.02, .26){$e_2$}
		\put(-0.02, .1){$e_1$}
		\put(-1.03, .18){$w_d$}
		\put(-.54, -.01){$t$ [sec]}
		\caption{Ball position and velocity and $w_d$.}
		\label{fig:sig0}
	\end{subfigure}
	\begin{subfigure}[t]{0.5\textwidth}
		\centering
		\includegraphics[trim = {0 4.2cm 0 0},clip,width = \unitlength]{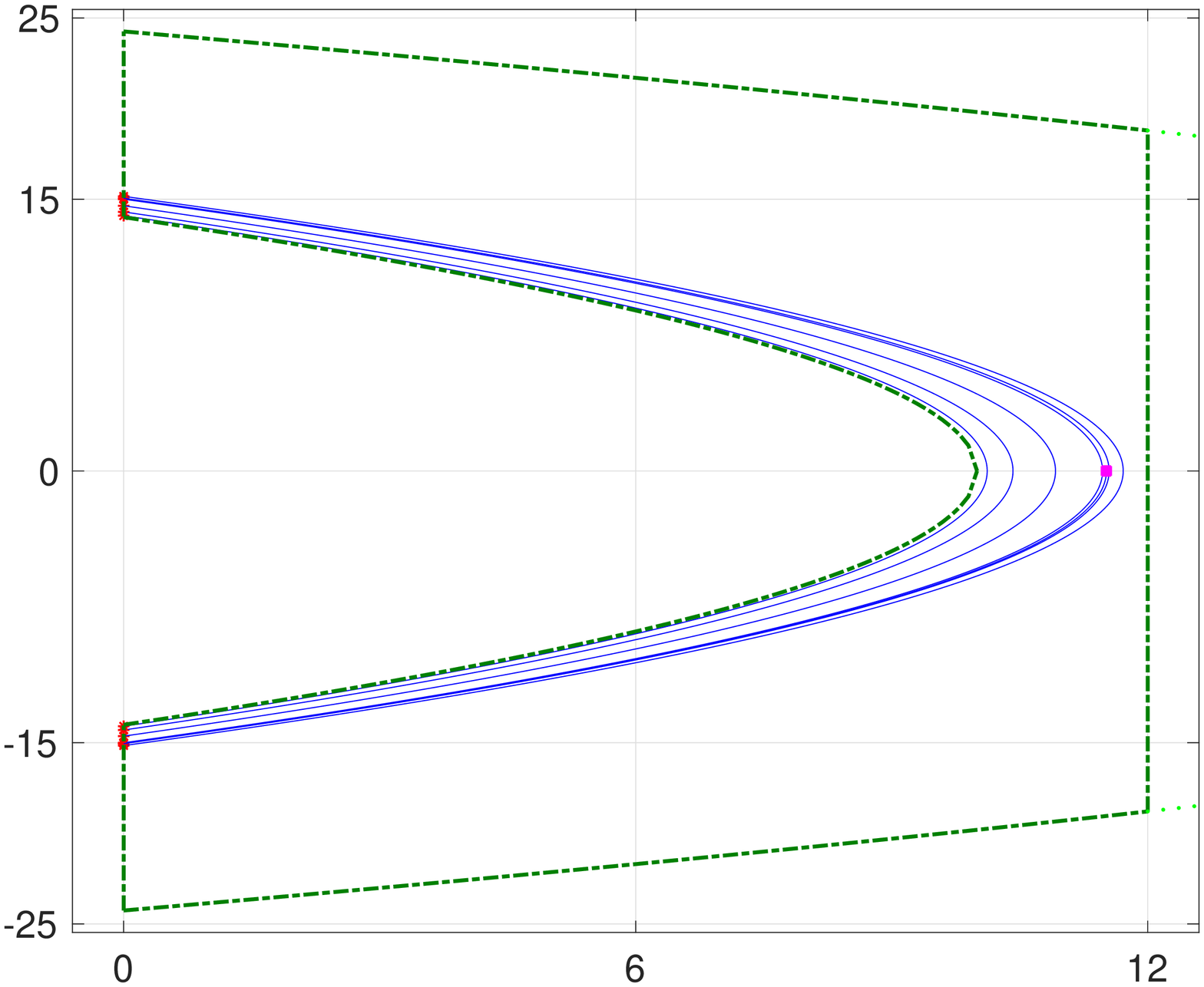}
		\put(-1.06,.43){$x_2$}
		\put(-.5,-.03){$x_1$}
		\put(-.2,.2){$\K$}
		\put(-.2,.54){(11,0)}
		\put(-.14,.53){\line(1,-2){0.05}}
		\caption{Solution on the $(x_1, x_2)$ plane.}
		\label{fig:tball0}
	\end{subfigure}
	\caption{\label{fig:bball0} Simulation of $\Huw$ controlled by $\kmd$ in \eqref{eq:Brd}.}
\end{figure}
\ConfSp{-3mm}

\section{Conclusion}
We propose methods for the design of controllers that render sets robust controlled forward invariant for hybrid dynamical systems.
The hybrid systems are modeled using differential and difference inclusions with state, control inputs, and disturbance constraints.
The robust controlled forward invariance properties are guaranteed by conditions on the data of the system, using CLFs for forward invariance.
The invariance property is guaranteed for the closed-loop system resulting from using a feedback controller.
When a set $K$ enjoys such properties, solutions to the closed-loop system evolve within the set they start from, even under the presence of disturbances.

Conditions on the data of the closed-loop system guaranteeing that sublevel sets of a given Lyapunov-like function are robustly forward invariant are presented.
Such conditions take advantage of the nonincreasing properties of $\Ly$ near the boundary of its sublevel sets.
To guarantee existence of nontrivial solution pairs from every point in such sublevel sets and completeness of every maximal solution pair, assumptions similar to those in \cite[Theorem 5.1]{PartI-Arxiv} are enforced.
When compared to the conditions in \cite[Theorem 5.1]{PartI-Arxiv}, on $\Cw$ required here are less restrictive as it does not require the flow set to be regular.

To systematically construct feedback pairs that render sets forward invariant uniformly in disturbances, we introduce control Lyapunov functions for forward invariance.
Such functions are not necessarily nonincreasing within the set to render forward invariant.
The proposed RCLF notions are conveniently used to derive conditions for the existence of continuous state-feedback laws inducing forward invariance.
The idea is to select feedback control from two carefully constructed set-valued maps, called the regulation maps.
Very importantly, the new RCLF notion is employed to synthesize state-feedback laws with pointwise minimum norm that effectively guarantee forward invariance.
For the stronger robust controlled forward invariance case, where completeness is required for every maximal solution pair within the set, a regulation map for flows involving the tangent cone of the flow set is derived from the well-known Nagumo Theorem.

Research on properties of the chosen selections using inverse optimality are undergoing.
Future research directions also include the development of barrier certificates for hybrid systems; see initial results in \cite{183}.

\balance
\bibliographystyle{unsrt}
\bibliography{bibs/CJref,bibs/RGSweb,bibs/CJintro}

\appendix
\label{sec:app}
\section{Appendix}

\subsection{Definitions and Related Results}
\label{app:others}
\begin{definition}(outer semicontinuity of set-valued maps)\label{def:osc}
	A set-valued map $S: \reals^n \rightrightarrows \reals^m$ is outer semicontinuous at $x \in \reals^n$ if for each sequence $\{x_i\}^{\infty}_{i=1}$ converging to a point $x \in \reals^n$ and each sequence $y_i \in S(x_i)$ converging to a point $y$, it holds that $y \in S(x)$; see  \cite[Definition 5.4]{rockafellar2009variational}. Given a set $K \subset \reals^n$, it is outer semicontinuous relative to $K$ if the set-valued mapping from $\reals^n$ to $\reals^m$ defined by $S(x)$ for $x\in K$ and $\emptyset$ for $x \notin K$ is outer semicontinuous at each $x\in K$.
	\hfill $\square$
\end{definition}


\begin{definition}(lower semicontinuous set-valued maps)
	A set-valued map $S: \reals^n \rightrightarrows \reals^m$ is lower semicontinuous if for every $x \in \reals^n$, one has that $\liminf\limits_{x_i \rightarrow x} S(x_i) \supset S(x),$ where 
	$$\liminf\limits_{x_i \rightarrow x} S(x_i) : = \{z: \forall x_i \rightarrow x, \exists z_i \rightarrow z \text{ s.t. } z_i \in S(x_i) \}$$
	is the inner limit of $S$ (see \cite[Chapter 5.B]{rockafellar2009variational}). 
\end{definition}

\begin{lemma}(\cite[Theorem 2.9.10]{clarke1990optimization})\label{lem:innerTcone}
	Given a set $S : = \{x: h(x) \leq 0\}$, suppose that, for every $x \in \{x: h(x) = 0\}$, $h$ is directionally Lipschitz at $x$ with $0 \notin \nabla h(x) \neq \emptyset$ and the collection of vectors $Y : = \{y : \inner{\nabla h(x)}{y} <\infty \}$ is nonempty.
	Then, $S$ admits a hypertangent at $x$ and
	\begin{enumerate}[leftmargin = 4mm, label = \arabic*)]
		\item\label{item:inner1} $y \in T_S(x) \ \ \ifeq \inner{\nabla h(x)}{y} \leq 0$;
		\item\label{item:inner2} $\exists y \in \inter T_S(x)\cap \inter Y \ \ \text{s.t.}\ \  \inner{\nabla h(x)}{y} < 0$.
	\end{enumerate}
	\vspace{-6mm}
	\hfill $\triangle$
\end{lemma}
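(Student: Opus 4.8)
The plan is to work within Clarke's calculus of generalized gradients, reading $\inner{\nabla h(x)}{y}$ as the generalized directional derivative $h^\circ(x;y) := \limsup_{z\to x,\,t\downarrow 0}\big(h(z+ty)-h(z)\big)/t = \sup_{\zeta\in\nabla h(x)}\inner{\zeta}{y}$, so that $Y=\{y:h^\circ(x;y)<\infty\}$ is exactly the set of directions along which $h$ is directionally Lipschitz at $x$. The backbone of both claims is the implication
\[
h^\circ(x;v)<0 \quad\Longrightarrow\quad v \text{ is a hypertangent to } S \text{ at } x,
\]
so I would establish this first. Fix $v$ with $h^\circ(x;v)<0$. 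Using upper semicontinuity of $(z,w)\mapsto h^\circ(z;w)$ (valid on the directionally Lipschitz region, where $\inter Y$ is nonempty), choose $\varepsilon>0$ so that $h^\circ(z;w)<0$ for all $z\in x+\varepsilon\ball$ and $w\in v+\varepsilon\ball$. Then for any $s\in S\cap(x+\varepsilon\ball)$, any $w\in v+\varepsilon\ball$, and any small $t>0$, Lebourg's mean value theorem gives $h(s+tw)-h(s)=t\inner{\zeta}{w}$ for some $\zeta\in\nabla h(\xi)$ with $\xi$ on the segment $[s,s+tw]$; since $\inner{\zeta}{w}\le h^\circ(\xi;w)<0$ and $h(s)\le 0$, we obtain $h(s+tw)\le 0$, i.e.\ $s+tw\in S$. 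This is precisely the hypertangency of $v$.

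With this implication in hand I would prove 1). The constraint qualification $0\notin\nabla h(x)$ together with $\nabla h(x)\neq\emptyset$ yields, by a separation argument, a direction $v_0$ with $h^\circ(x;v_0)=\sup_{\zeta\in\nabla h(x)}\inner{\zeta}{v_0}<0$; hence hypertangents exist, and Rockafellar's theorem applies, giving that $T_S(x)$ is the closure of the (nonempty, open, convex) hypertangent cone while $\inter T_S(x)$ equals that cone. By the implication above, $\{v:h^\circ(x;v)<0\}$ consists of hypertangents, so it sits inside $\inter T_S(x)$. Since $h^\circ(x;\cdot)$ is sublinear, hence convex and finite on $Y$, the set $\{v:h^\circ(x;v)<0\}$ is nonempty and dense in $\{v:h^\circ(x;v)\le 0\}$ (for $g(v)\le 0$ and $g(v_0)<0$, the segment $\lambda v_0+(1-\lambda)v$ has $g<0$ and tends to $v$ as $\lambda\downarrow 0$); taking closures gives $\{v:h^\circ(x;v)\le 0\}\subset T_S(x)$.

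For the reverse inclusion I would compute the Clarke normal cone, establishing $N_S(x)\subset\{\lambda\zeta:\lambda\ge 0,\ \zeta\in\nabla h(x)\}$, and then pass to polar cones: $T_S(x)=N_S(x)^{\circ}\subset\{v:\inner{\zeta}{v}\le 0\ \forall\,\zeta\in\nabla h(x)\}=\{v:h^\circ(x;v)\le 0\}$. Combining the two inclusions proves the equivalence in 1). Claim 2) is then essentially a byproduct: the direction $v_0$ produced above has $h^\circ(x;v_0)<0$, so $v_0\in\inter T_S(x)$; perturbing $v_0$ inside the open set $\{h^\circ(x;\cdot)<0\}$, it may be taken in $\inter Y$ (nonempty under the directionally Lipschitz hypothesis), which furnishes the required $y\in\inter T_S(x)\cap\inter Y$ with $\inner{\nabla h(x)}{y}<0$.

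The main obstacle I anticipate is the passage beyond the locally Lipschitz setting: the upper semicontinuity of $h^\circ$, Lebourg's mean value theorem, and Rockafellar's characterization of $T_S(x)$ through hypertangents must all be invoked in their directionally Lipschitz versions, where finiteness of $h^\circ(x;\cdot)$ holds only on $Y$. In particular, the reverse inclusion and the normal-cone computation hinge delicately on the constraint qualification $0\notin\nabla h(x)$ (to exclude the degenerate case $0\in\nabla h(x)$, where $h^\circ(x;\cdot)\ge 0$ everywhere and the characterization fails) and on $\inter Y\neq\emptyset$, which is what keeps the relevant directions inside the domain of finiteness of $h^\circ(x;\cdot)$.
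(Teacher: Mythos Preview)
The paper does not supply its own proof of this lemma: it is stated in the appendix purely as a citation of \cite[Theorem 2.9.10]{clarke1990optimization} and used as a black box in the proof of Lemma~\ref{lem:Tcone}. So there is nothing in the paper to compare your argument against.

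That said, your outline is essentially the standard route in Clarke's book: establish that $h^\circ(x;v)<0$ forces $v$ to be a hypertangent (via the mean value theorem and the upper semicontinuity of $h^\circ$), invoke the constraint qualification $0\notin\nabla h(x)$ to produce such a direction, use Rockafellar's identification of $\inter T_S(x)$ with the hypertangent cone, and close the loop with the normal-cone inclusion $N_S(x)\subset \reals_{\ge 0}\nabla h(x)$ to get the reverse containment. Your caveat about needing the directionally Lipschitz versions of these ingredients (since $h^\circ(x;\cdot)$ is finite only on $Y$) is exactly the right place to be careful, and is precisely why the hypothesis $\inter Y\neq\emptyset$ appears. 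The argument is sound at the level of a sketch; the only place that deserves more detail is the normal-cone computation, which in Clarke's treatment is itself a nontrivial step relying on the directionally Lipschitz hypothesis.
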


\begin{corollary}(\cite[Corollary 2 of Theorem 2.9.8]{clarke1990optimization})\label{coro:intersect}
	Let $C_1, C_2 \subset \reals^n$ and $x \in C_1 \cap C_2$.
	Suppose that
	$$T_{C_1}(x) \cap \inter T_{C_2}(x) \neq \emptyset,$$
	and that $C_2$ admits at least one hypertangent vector at $x$.
	Then, if $C_1$ and $C_2$ are regular at $x$, one has
	$$T_{C_1}(x) \cap T_{C_2}(x) = T_{C_1 \cap C_2}(x).$$
\end{corollary}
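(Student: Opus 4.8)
The plan is to prove the two set inclusions separately, exploiting the interplay between the Clarke tangent cone $T_C(x)$, the contingent (Bouligand) tangent cone $\widehat{T}_C(x)$, and the hypertangent cone $H_C(x)$. I would first recall the standard facts I intend to use: $T_C(x)\subset\widehat{T}_C(x)$ always holds; regularity of $C$ at $x$ means exactly $T_C(x)=\widehat{T}_C(x)$; $T_C(x)$ is a closed convex cone; and, whenever $H_C(x)\neq\emptyset$, one has $H_C(x)=\inter T_C(x)$ together with the line-segment property that $v\in T_C(x)$ and $w\in H_C(x)$ imply $(1-\lambda)v+\lambda w\in H_C(x)$ for every $\lambda\in(0,1]$. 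The hypertangent hypothesis on $C_2$ guarantees $H_{C_2}(x)\neq\emptyset$, so the constraint qualification $T_{C_1}(x)\cap\inter T_{C_2}(x)\neq\emptyset$ provides a vector $w$ that is simultaneously Clarke-tangent to $C_1$ and hypertangent to $C_2$ at $x$.

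For the inclusion $T_{C_1\cap C_2}(x)\subset T_{C_1}(x)\cap T_{C_2}(x)$, which is the routine direction, I would use that the contingent cone is monotone under set inclusion: since $C_1\cap C_2\subset C_i$, the one-sequence definition of $\widehat{T}$ gives directly $\widehat{T}_{C_1\cap C_2}(x)\subset\widehat{T}_{C_1}(x)\cap\widehat{T}_{C_2}(x)$. Chaining this with $T_{C_1\cap C_2}(x)\subset\widehat{T}_{C_1\cap C_2}(x)$ and the regularity of $C_1$ and $C_2$ (which turns $\widehat{T}_{C_i}(x)$ back into $T_{C_i}(x)$) yields the claimed inclusion; note this direction uses neither the constraint qualification nor the hypertangent hypothesis.

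The reverse inclusion $T_{C_1}(x)\cap T_{C_2}(x)\subset T_{C_1\cap C_2}(x)$ is where the real work lies, and where the two extra hypotheses are consumed. Fix $v\in T_{C_1}(x)\cap T_{C_2}(x)$ and the vector $w\in T_{C_1}(x)\cap H_{C_2}(x)$ from the constraint qualification, and set $v_\lambda:=(1-\lambda)v+\lambda w$ for $\lambda\in(0,1]$. The aim is to show that each $v_\lambda$ is a \emph{hypertangent} to $C_1\cap C_2$ at $x$; then, since $v_\lambda\in H_{C_1\cap C_2}(x)\subset T_{C_1\cap C_2}(x)$ and $T_{C_1\cap C_2}(x)$ is closed, letting $\lambda\downarrow0$ gives $v\in T_{C_1\cap C_2}(x)$. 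Verifying the hypertangency of $v_\lambda$ is the crux: from the uniform neighborhood estimate in the definition of hypertangency of $w$ to $C_2$ and the line-segment property applied to $v\in T_{C_2}(x)$ and $w\in H_{C_2}(x)$, the convex combination $v_\lambda$ stays hypertangent to $C_2$; simultaneously one must produce the matching increment keeping nearby base points inside $C_1$, using that $v,w\in T_{C_1}(x)$ together with the regularity of $C_1$. As a by-product, this argument shows $H_{C_1\cap C_2}(x)\neq\emptyset$ and that $\widehat{T}_{C_1\cap C_2}(x)$ coincides with the Clarke tangent cone, i.e.\ $C_1\cap C_2$ is itself regular at $x$.

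The main obstacle I anticipate is precisely this hypertangency verification for $v_\lambda$: one needs a single $\varepsilon>0$ and a common neighborhood of $x$ on which the defining inclusion $y+t(v_\lambda+\varepsilon\ball)\subset C_1\cap C_2$ holds for all small $t>0$ and all $y\in(C_1\cap C_2)\cap(x+\varepsilon\ball)$. Decoupling this into a statement for $C_2$ (handled by the hypertangency of $w$) and a statement for $C_1$ (handled by tangency of $v$ and $w$ together with regularity) is delicate because the admissible increments for the two sets must be reconciled into one; this is exactly the role of the constraint qualification $T_{C_1}(x)\cap\inter T_{C_2}(x)\neq\emptyset$, which prevents the two tangent cones from meeting only tangentially and guarantees the reconciliation is possible. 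This mirrors Clarke's proof of Theorem~2.9.8 and its corollary, and I would follow that template.
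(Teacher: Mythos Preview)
The paper does not prove this corollary; it is quoted from Clarke's book and invoked as a tool in the proof of Lemma~\ref{lem:Tcone}. So there is no ``paper's proof'' to compare against, and your write-up is really a reconstruction of Clarke's argument. The overall architecture you propose---the easy inclusion via monotonicity of the contingent cone plus regularity, and the hard inclusion via convex interpolation $v_\lambda=(1-\lambda)v+\lambda w$ toward a vector $w\in T_{C_1}(x)\cap H_{C_2}(x)$ followed by $\lambda\downarrow 0$---is correct.

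There is, however, a genuine overreach in your plan for the hard inclusion. You set as your target that each $v_\lambda$ be a \emph{hypertangent} to $C_1\cap C_2$ at $x$, and you derive from this that $H_{C_1\cap C_2}(x)\neq\emptyset$. That target is too strong and is in general unattainable: take $C_1$ to be a line through $x$ in $\reals^2$ and $C_2=\reals^2$; both are regular, the constraint qualification holds (since $\inter T_{C_2}(x)=\reals^2$), yet $C_1\cap C_2=C_1$ has empty hypertangent cone at $x$. What the argument actually delivers---and what suffices---is $v_\lambda\in T_{C_1\cap C_2}(x)$, obtained directly from the sequential definition of the Clarke tangent cone: for any $y_i\to x$ in $C_1\cap C_2$ and $t_i\downarrow 0$, use $v_\lambda\in T_{C_1}(x)$ to produce $z_i\to v_\lambda$ with $y_i+t_iz_i\in C_1$, and then use $v_\lambda\in H_{C_2}(x)$ (which you correctly get from the line-segment property) to conclude that eventually $y_i+t_iz_i\in C_2$ as well, since $z_i$ eventually lies in $v_\lambda+\varepsilon\ball$. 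This step uses only Clarke tangency to $C_1$ and hypertangency to $C_2$; the regularity of $C_1$ plays no role here (it is consumed entirely in the ``easy'' direction), so your remark that regularity of $C_1$ is needed for the hypertangency verification is also off. Adjusting the target from $H_{C_1\cap C_2}(x)$ to $T_{C_1\cap C_2}(x)$ closes the gap.
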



\begin{corollary}(\cite[Corollary 2.13]{robust})\label{coro:2.13}
	Given a lower semicontinuous set-valued map $W$ and an upper semicontinuous function $w$, the set-valued map defined for each $z$ as $S(z) := \{z' \in W (z) : w(z, z') < 0 \}$ is lower semicontinuous.
\end{corollary}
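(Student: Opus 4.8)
The plan is to verify lower semicontinuity of $S$ directly from the sequential characterization recalled just above the statement, namely that $S$ is lower semicontinuous at a point $z$ precisely when $\liminf_{z_i \to z} S(z_i) \supset S(z)$. Fixing an arbitrary $z$ and an arbitrary element $z' \in S(z)$, the goal reduces to showing that for every sequence $z_i \to z$ there is a sequence $\zeta_i \to z'$ with $\zeta_i \in S(z_i)$ for all sufficiently large $i$; that is, $\zeta_i \in W(z_i)$ and $w(z_i, \zeta_i) < 0$ eventually.

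First I would exploit the lower semicontinuity of $W$. Since $z' \in S(z) \subset W(z)$ and $z_i \to z$, the definition of the inner limit provides a sequence $\zeta_i \to z'$ with $\zeta_i \in W(z_i)$ for all large $i$. These $\zeta_i$ are the natural candidates for the required approximating sequence; what remains is only to verify the strict inequality $w(z_i, \zeta_i) < 0$.

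Next I would invoke the upper semicontinuity of $w$ at $(z, z')$. Since $(z_i, \zeta_i) \to (z, z')$, upper semicontinuity yields $\limsup_{i \to \infty} w(z_i, \zeta_i) \leq w(z, z')$. Because $z' \in S(z)$ forces the strict inequality $w(z, z') < 0$, it follows that $w(z_i, \zeta_i) < 0$ for all $i$ large enough. Hence $\zeta_i \in S(z_i)$ eventually while $\zeta_i \to z'$, which establishes $z' \in \liminf_{z_i \to z} S(z_i)$. As $z$ and $z' \in S(z)$ were arbitrary, $S$ is lower semicontinuous.

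The only delicate point, and the step I would flag, is that the argument hinges on the strictness of $w(z, z') < 0$: upper semicontinuity controls only the $\limsup$, so a nonstrict inequality $w(z, z') \leq 0$ would not by itself guarantee that the perturbed values $w(z_i, \zeta_i)$ stay negative. The strict inequality in the definition of $S$ is precisely what keeps the selected points inside $S(z_i)$ along the approaching sequence, and it is essential to the conclusion.
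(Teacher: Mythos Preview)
Your argument is correct and is the standard sequential verification of lower semicontinuity for a set-valued map obtained by intersecting a lower semicontinuous map with a strict sublevel set of an upper semicontinuous function. The paper itself does not prove this statement; it merely quotes it as \cite[Corollary 2.13]{robust}, so there is no in-paper proof to compare against. Your emphasis on the strict inequality $w(z,z')<0$ being essential is exactly the right observation: with only $w(z,z')\leq 0$, upper semicontinuity would not force $w(z_i,\zeta_i)<0$ along the approximating sequence, and the conclusion would fail in general.
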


\begin{theorem}(Michael Selection Theorem, \cite[Theorem 2.18]{robust})\label{thm:mselection}
	Given a lower semicontinuous set-valued map $S: \reals^n \rightrightarrows \reals^m$ with nonempty, convex, and closed values, there exists a continuous selection $s : \reals^n \rightarrow \reals^m$.
\end{theorem}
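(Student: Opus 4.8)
The plan is to prove the theorem by the classical two-stage scheme: first construct, for each tolerance $\varepsilon > 0$, a continuous \emph{$\varepsilon$-approximate selection}, and then iterate this construction to obtain a uniformly Cauchy sequence of approximate selections whose limit is an exact continuous selection. The two structural facts that drive the argument are that the domain $\reals^n$, being a metric space, is paracompact (so a locally finite partition of unity subordinate to any open cover exists), and that the codomain $\reals^m$ is a complete normed space (so uniformly Cauchy sequences converge). Throughout, write $d(y, A) := \inf_{z \in A} |y - z|$ for the distance from a point $y \in \reals^m$ to a set $A \subset \reals^m$, and recall that lower semicontinuity of $S$ is equivalent to the openness of $\{x : S(x) \cap V \neq \emptyset\}$ for every open $V \subset \reals^m$.

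\textbf{Step 1 (approximate selections).} I would first show that if $S$ is lower semicontinuous with nonempty convex values, then for every $\varepsilon > 0$ there is a continuous $s_\varepsilon : \reals^n \to \reals^m$ with $d(s_\varepsilon(x), S(x)) < \varepsilon$ for all $x$. For each $y \in \reals^m$ set $U_y := \{x \in \reals^n : S(x) \cap (y + \varepsilon \inter\ball) \neq \emptyset\}$; lower semicontinuity of $S$ makes each $U_y$ open, and since every $S(x)$ is nonempty the family $\{U_y\}_{y}$ covers $\reals^n$. Using paracompactness, take a locally finite partition of unity $\{p_i\}_{i}$ subordinate to this cover, with an associated point $y_i$ for each $i$ such that $\operatorname{supp} p_i \subset U_{y_i}$, and define $s_\varepsilon := \sum_i p_i y_i$. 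Local finiteness gives continuity. For fixed $x$, every index $i$ with $p_i(x) > 0$ satisfies $x \in U_{y_i}$, hence $d(y_i, S(x)) < \varepsilon$; because $S(x)$ is convex its $\varepsilon$-neighborhood is convex, and $s_\varepsilon(x)$ is a convex combination of such $y_i$, so $d(s_\varepsilon(x), S(x)) < \varepsilon$.

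\textbf{Step 2 (iteration to an exact selection).} Next I would build continuous maps $s_k$, $k = 1, 2, \dots$, satisfying $d(s_k(x), S(x)) < 2^{-k}$ and $|s_{k+1}(x) - s_k(x)| < 2^{-k+1}$ for all $x$. Take $s_1$ from Step 1 with $\varepsilon = 2^{-1}$. Given $s_k$, consider the shrunken map $S_k(x) := S(x) \cap (s_k(x) + 2^{-k}\inter\ball)$, which has nonempty convex values by the approximation bound on $s_k$ and convexity of $S(x)$. Granting that $S_k$ is again lower semicontinuous, apply Step 1 to $S_k$ with tolerance $2^{-(k+1)}$ to obtain $s_{k+1}$ with $d(s_{k+1}(x), S_k(x)) < 2^{-(k+1)}$. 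Since $S_k(x) \subset S(x)$ this yields $d(s_{k+1}(x), S(x)) < 2^{-(k+1)}$, and since $S_k(x) \subset s_k(x) + 2^{-k}\inter\ball$ it yields $|s_{k+1}(x) - s_k(x)| < 2^{-(k+1)} + 2^{-k} < 2^{-k+1}$. These bounds are summable, so $\{s_k\}$ is uniformly Cauchy and converges uniformly to a continuous $s$. Finally $d(s(x), S(x)) = \lim_k d(s_k(x), S(x)) = 0$, and closedness of $S(x)$ forces $s(x) \in S(x)$, so $s$ is the desired continuous selection.

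\textbf{Main obstacle.} The crux is verifying that the shrunken map $S_k(x) = S(x) \cap (s_k(x) + 2^{-k}\inter\ball)$ remains lower semicontinuous; this is where convexity is genuinely used a second time, since intersecting a lower semicontinuous map with a moving open ball need not preserve lower semicontinuity in general. Concretely, given $x_0$ and $z \in S_k(x_0)$, I would produce nearby selections by combining the lower semicontinuity of $S$ at $x_0$ (to track $z$ by points of $S(x)$ for $x$ near $x_0$) with the continuity of $x \mapsto s_k(x)$ (to keep those points strictly inside the shrinking ball), using a short convexity argument to move strictly interior points of $S_k(x_0)$ into $S_k(x)$. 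Establishing this intersection lemma cleanly, together with confirming the partition-of-unity construction of Step 1 on $\reals^n$, are the only nonroutine ingredients; everything else is bookkeeping with the geometric-series estimates.
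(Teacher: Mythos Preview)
The paper does not prove this statement at all; it is listed in the appendix as a cited auxiliary result (\cite[Theorem 2.18]{robust}) and is invoked as a black box in the proof of Theorem~\ref{thm:weakExist}. So there is no ``paper's own proof'' to compare against.

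Your proposal is the classical Michael argument and is essentially correct. One small correction: the lower semicontinuity of the shrunken map $S_k(x) = S(x)\cap(s_k(x)+2^{-k}\inter\ball)$ does \emph{not} require convexity, contrary to what you suggest. Indeed, given $z_0\in S_k(x_0)$, choose $\delta>0$ with $|z_0-s_k(x_0)|+2\delta<2^{-k}$; by continuity of $s_k$ there is a neighborhood $U$ of $x_0$ on which $B(z_0,\delta)\subset s_k(x)+2^{-k}\inter\ball$, and by lower semicontinuity of $S$ one may shrink $U$ so that $S(x)\cap B(z_0,\delta)\neq\emptyset$ for $x\in U$; hence $S_k(x)\cap B(z_0,\delta)\neq\emptyset$. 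So intersecting a lower semicontinuous map with a continuously moving \emph{open} ball always preserves lower semicontinuity. Convexity is genuinely used a second time in Step~2, but only to guarantee that $S_k$ again has convex values so that Step~1 can be re-applied; the ``intersection lemma'' you flag as the crux is in fact routine.
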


\begin{theorem}(\cite[Theorem 4.1]{dugundji1951extension})\label{thm:4.1}
	Given a closed set $A \subset \reals^n$ and a continuous map $s : A \mapsto \reals^m$, there exists a continuous extension $\widetilde{s} : \reals^n \mapsto \reals^m$ of $s$.
	Furthermore, $\widetilde{s}(x) \subset \overline\co (s(A))$ for every $x \in \reals^n$.
\end{theorem}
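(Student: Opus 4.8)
The plan is to build the extension by interpolating the boundary values of $s$ over the open complement $U := \reals^n \setminus A$ using a locally finite partition of unity, following the classical Dugundji construction. First I would use that $A$ closed makes $U$ open, and that $U$, as a subset of $\reals^n$, is a metric space and hence paracompact. For each $x \in U$ set $d(x,A) := \inf_{a \in A}|x-a| > 0$ and cover $U$ by the open balls $B(x, \tfrac{1}{2} d(x,A)) := \{y \in \reals^n : |y-x| < \tfrac12 d(x,A)\}$. By paracompactness this cover admits a locally finite open refinement $\{V_\alpha\}$ carrying a subordinate partition of unity $\{\phi_\alpha\}$, with $\phi_\alpha \geq 0$, $\sum_\alpha \phi_\alpha \equiv 1$ on $U$, and each $V_\alpha$ contained in some ball $B(x_\alpha, \tfrac12 d(x_\alpha,A))$. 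For each $\alpha$ I would pick an almost-nearest point $a_\alpha \in A$ with $|x_\alpha - a_\alpha| < 2\, d(x_\alpha,A)$, and define the candidate extension by $\widetilde{s}(x) := s(x)$ for $x \in A$ and $\widetilde{s}(x) := \sum_\alpha \phi_\alpha(x)\, s(a_\alpha)$ for $x \in U$.

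The convex-hull conclusion is then immediate: for $x \in U$ the value $\widetilde{s}(x)$ is a convex combination (coefficients $\phi_\alpha(x) \geq 0$ summing to one) of points $s(a_\alpha) \in s(A)$, so $\widetilde{s}(x) \in \overline\co(s(A))$, while for $x \in A$ we have $s(x) \in s(A)$ trivially. Continuity on the open set $U$ is routine, since local finiteness of $\{V_\alpha\}$ makes $\widetilde{s}$ locally a finite sum of products of the continuous $\phi_\alpha$ with the constants $s(a_\alpha)$; continuity at interior points of $A$ is likewise immediate because $\widetilde{s}$ coincides with $s$ on a neighborhood there.

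The delicate point, and the step I expect to be the main obstacle, is continuity at a boundary point $a \in A$ that is a limit of points of $U$; the plan is to show only $a_\alpha$ close to $a$ can contribute near $a$. Fix $x \in U$ and any $\alpha$ with $\phi_\alpha(x) > 0$, so $x \in V_\alpha \subset B(x_\alpha, \tfrac12 d(x_\alpha,A))$. Since $a \in A$, we get $d(x_\alpha,A) \leq |x_\alpha - a| \leq |x_\alpha - x| + |x - a| < \tfrac12 d(x_\alpha,A) + |x-a|$, hence $d(x_\alpha,A) < 2|x-a|$ and $|x_\alpha - a| < 2|x-a|$; combining with $|a_\alpha - x_\alpha| < 2\,d(x_\alpha,A) < 4|x-a|$ yields $|a_\alpha - a| < 6|x-a|$. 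Thus every $a_\alpha$ contributing to $\widetilde{s}(x)$ lies within $6|x-a|$ of $a$. Given $\varepsilon > 0$, continuity of $s$ at $a$ supplies $\delta > 0$ with $|s(b)-s(a)| < \varepsilon$ whenever $b \in A$ and $|b-a| < \delta$; taking $|x-a| < \delta/6$ forces $|s(a_\alpha)-s(a)| < \varepsilon$ for every contributing $\alpha$, so $|\widetilde{s}(x) - s(a)| = |\sum_\alpha \phi_\alpha(x)(s(a_\alpha)-s(a))| \leq \sum_\alpha \phi_\alpha(x)\,\varepsilon = \varepsilon$. This establishes continuity at $a$ and, together with the previous paragraph, completes the argument.
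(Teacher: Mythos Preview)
Your proof is correct and follows the classical Dugundji construction faithfully; the distance estimates and the continuity argument at boundary points of $A$ are handled cleanly. Note, however, that the paper does not supply its own proof of this statement: it is quoted in the appendix as a known result, with attribution to \cite[Theorem 4.1]{dugundji1951extension}, so there is no ``paper's proof'' to compare against---your argument is precisely the one Dugundji gives.
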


\begin{proposition}(Minimal Selection Theorem \cite[Proposition 2.19]{robust})\label{prop:2.19}
	Let the set-valued map $S: \reals^n \rightrightarrows \reals^m$ be lower semicontinuous with closed graph and nonempty closed convex values.
	Then, the minimal selection $m : \reals^n \rightarrow \reals^m$, which is given by
	$$m(x) := \argmin \left\{ |z|: z \in S(x)\right\},$$
	is locally bounded, and if $\gph m$ is closed and, then, $m(x)$ is continuous.
\end{proposition}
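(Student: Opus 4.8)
The plan is to establish the three assertions in turn: that $m$ is well defined as a single-valued function, that $m$ is locally bounded, and that closedness of $\gph m$ upgrades local boundedness to continuity.

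First I would settle well-definedness. For each fixed $x \in \reals^n$ the value $S(x)$ is nonempty, closed, and convex by hypothesis. The function $z \mapsto |z|^2$ is continuous and coercive on $\reals^m$, so it attains its infimum over the nonempty closed set $S(x)$; and since $z \mapsto |z|^2$ is strictly convex while $S(x)$ is convex, the minimizer is unique. Equivalently, $m(x)$ is the Euclidean projection of the origin onto $S(x)$. Hence $m(x) := \argmin\{|z| : z \in S(x)\}$ is a genuine single-valued map $\reals^n \to \reals^m$, and minimizing $|z|$ coincides with minimizing $|z|^2$.

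Next I would prove local boundedness using the inner-limit characterization of lower semicontinuity. Fix $x_0$ and pick any $z_0 \in S(x_0)$, possible since $S(x_0) \neq \emptyset$. Arguing by contradiction, suppose $m$ is not locally bounded at $x_0$; then there is a sequence $x_i \to x_0$ with $|m(x_i)| \to \infty$. Lower semicontinuity gives $\liminf_{x_i \to x_0} S(x_i) \supset S(x_0)$, so for the chosen $z_0$ there exist $z_i \in S(x_i)$ with $z_i \to z_0$. Since $m(x_i)$ is the minimum-norm element of $S(x_i)$ and $z_i \in S(x_i)$, we have $|m(x_i)| \le |z_i| \to |z_0| < \infty$, contradicting $|m(x_i)| \to \infty$. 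Therefore $m$ is bounded on a neighborhood of $x_0$, i.e. locally bounded.

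Finally, assuming $\gph m$ is closed, I would deduce continuity by a standard subsequence argument. Fix $x_0$ and let $x_i \to x_0$. By local boundedness the sequence $\{m(x_i)\}$ is bounded, so any subsequence admits a convergent further subsequence; if $m(x_{i_k}) \to y$, then $(x_{i_k}, m(x_{i_k})) \to (x_0, y)$ and closedness of $\gph m$ forces $(x_0, y) \in \gph m$, whence $y = m(x_0)$ by single-valuedness. Thus every convergent subsequence of the bounded sequence $\{m(x_i)\}$ has the same limit $m(x_0)$, which forces $m(x_i) \to m(x_0)$, giving continuity at the arbitrary point $x_0$. I expect the local-boundedness step to be the main obstacle, since it is precisely there that lower semicontinuity must be invoked in its sharp inner-limit form to produce a uniform finite bound on the minimal-norm selections; the closed graph of $S$ only yields closed values (consistent with the uniqueness argument), and the continuity conclusion genuinely requires the separate hypothesis that $\gph m$ is closed, because the minimal selection of a merely lower semicontinuous map need not be continuous in general.
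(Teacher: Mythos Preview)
The paper does not supply its own proof of this proposition: it is quoted verbatim from \cite[Proposition~2.19]{robust} in the appendix as a supporting result, with no argument given. Hence there is nothing in the paper to compare your proposal against.

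That said, your argument is correct and is essentially the standard one. Well-definedness via strict convexity of $|\cdot|^2$ on the nonempty closed convex set $S(x)$ is the right observation (this is exactly the projection of the origin onto $S(x)$). The local boundedness step is where lower semicontinuity is used, and your contradiction argument via the inner-limit characterization is clean: any $z_0\in S(x_0)$ is tracked by $z_i\in S(x_i)$ with $z_i\to z_0$, so $|m(x_i)|\le |z_i|$ stays bounded. Finally, the upgrade from ``locally bounded with closed graph'' to ``continuous'' is the routine subsequence/outer-semicontinuity argument you wrote. Your closing remark is also on point: lower semicontinuity alone does not force continuity of the minimal selection, which is why the extra hypothesis on $\gph m$ is needed in the statement.
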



\begin{lemma}\label{lem:Tcone}
	Consider a closed set $\Ccl \subset \reals^n \times \Wc$ that has $0 \in \wxc(x)$ for every $x \in \Pwc{\Ccl}$ and a map $\Fcl : \reals^n \times \Wc \rightrightarrows \reals^n$ satisfying item~\ref{item:A2} in Definition~\ref{def:hbc}.
	Suppose there exists a pair $(\Ly, \rU)$, where the continuous function $\Ly$ is continuously differentiable on an open set containing $\Lss$ and $\rU \in \reals$ is such that, for some $\level < \rU$, items \eqref{eq:ly1} and \ref{item:Ly1}-\ref{item:Ly3} hold.
	Then, for every $x \in \partial(\K \cap \Pwc{\Cw}) \setminus \Pwd{\Dcl}$,
	\begin{align}\label{eq:Tcone}
		\Fcl(x, 0) \cap T_{\K \cap \Pwc{\Ccl}}(x) \neq \emptyset.
	\end{align}
\end{lemma}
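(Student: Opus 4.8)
The plan is to first simplify the set whose tangent cone we must analyze. Since $\K = \Ls \cap (\Pwc{\Ccl} \cup \Pwd{\Dcl})$, intersecting with $\Pwc{\Ccl}$ collapses the union, giving $\K \cap \Pwc{\Ccl} = \Ls \cap \Pwc{\Ccl}$. Both $\Ls$ (a sublevel set of the continuous function $\Ly$) and $\Pwc{\Ccl}$ are closed, so this intersection is closed and its boundary lies inside it; hence any $x \in \partial(\K \cap \Pwc{\Ccl}) \setminus \Pwd{\Dcl}$ satisfies $x \in \Ls \cap \Pwc{\Ccl}$ and $x \notin \Pwd{\Dcl}$. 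I would then split into three cases according to whether $\Ly(x) < \level$ or $\Ly(x) = \level$, and whether $x$ lies in $\inter \Pwc{\Ccl}$ or in $\partial \Pwc{\Ccl}$; because $x$ is a boundary point of the intersection, the configuration $\Ly(x) < \level$ with $x \in \inter \Pwc{\Ccl}$ cannot occur, and the three remaining cases are exhaustive.

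In the first case, $\Ly(x) < \level$, so $x \in \inter \Ls$ and necessarily $x \in \partial \Pwc{\Ccl}$. Since $\Ls$ contains a neighborhood of $x$, the intersection $\Ls \cap \Pwc{\Ccl}$ coincides with $\Pwc{\Ccl}$ near $x$; tangent cones being local objects, $T_{\K \cap \Pwc{\Ccl}}(x) = T_{\Pwc{\Ccl}}(x)$, and \ref{item:Ly2} directly yields \eqref{eq:Tcone}. In the second case, $\Ly(x) = \level$ and $x \in \inter \Pwc{\Ccl}$; now $\Pwc{\Ccl}$ contains a neighborhood of $x$, so $T_{\K \cap \Pwc{\Ccl}}(x) = T_{\Ls}(x)$. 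Using \ref{item:Ly1} (so $\nabla \Ly(x) \neq 0$) together with the $C^1$-smoothness of $\Ly$ and Lemma~\ref{lem:innerTcone} applied to $h = \Ly - \level$, one gets $T_{\Ls}(x) = \{\xi : \inner{\nabla \Ly(x)}{\xi} \leq 0\}$. Because $0 \in \wxc(x)$ gives $(x, 0) \in \Ccl$ and $x \in \Ir$, \eqref{eq:ly1} forces $\inner{\nabla \Ly(x)}{\eta} \leq 0$ for every $\eta \in \Fcl(x, 0)$; since $\Fcl(x,0)$ is nonempty by \ref{item:A2}, any such $\eta$ lies in $T_{\Ls}(x) = T_{\K \cap \Pwc{\Ccl}}(x)$, proving \eqref{eq:Tcone}.

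The remaining case, $\Ly(x) = \level$ with $x \in \partial \Pwc{\Ccl}$, is where both constraints are active and where I cannot invoke the intersection formula for tangent cones (Corollary~\ref{coro:intersect}) without regularity of $\Pwc{\Ccl}$; this is the main obstacle and precisely the place where the constructive argument replaces the regularity assumption of \cite[Theorem 5.1]{PartI-Arxiv}. Here I would use \ref{item:Ly3} to select $\xi \in \Xi_x$, so that $\xi \in \Fcl(x, 0) \cap T_{\Pwc{\Ccl}}(x)$ and $\inner{\nabla \Ly(x)}{\xi} < 0$. By the definition of the contingent tangent cone there exist $\tau_i \searrow 0$ and $\xi_i \to \xi$ with $x + \tau_i \xi_i \in \Pwc{\Ccl}$. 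Using differentiability of $\Ly$ at $x$ and $\Ly(x) = \level$, I would show $(\Ly(x + \tau_i \xi_i) - \level)/\tau_i \to \inner{\nabla \Ly(x)}{\xi} < 0$, whence $\Ly(x + \tau_i \xi_i) < \level$, i.e.\ $x + \tau_i \xi_i \in \Ls$, for all large $i$. The tail of the sequence then lies in $\Ls \cap \Pwc{\Ccl}$ while still approximating $\xi$, so $\xi \in T_{\K \cap \Pwc{\Ccl}}(x)$; since $\xi \in \Fcl(x,0)$, this establishes \eqref{eq:Tcone}.

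I would emphasize that the strict inequality $\inner{\nabla \Ly(x)}{\xi} < 0$ built into \ref{item:Ly3} is exactly what makes the third case go through: it produces a first-order decrease of $\Ly$ that dominates the differentiability remainder and pushes the $\Pwc{\Ccl}$-approximating sequence simultaneously into $\Ls$, thereby bypassing any tangent-cone intersection identity and the attendant regularity hypothesis.
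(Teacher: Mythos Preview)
Your proof is correct and shares the paper's three-case decomposition; the first two cases are handled identically. In the third case (where $\Ly(x)=\level$ and $x\in\partial\Pwc{\Ccl}$), however, you take a genuinely different route. The paper picks $\xi\in\Xi_x$, builds the ray segment $C_x=\con\{x,x'\}\subset\{x+\alpha\xi:\alpha\geq 0\}\cap\Pwc{\Ccl}$, and then applies Corollary~\ref{coro:intersect} with $C_1=C_x$ and $C_2=\Ls$: since $C_x$ is convex and $\Ls$ is regular (via Lemma~\ref{lem:innerTcone} and \cite[Corollary 2 of Theorem 2.4.7]{clarke1990optimization}), the intersection formula $T_{C_x}(x)\cap T_{\Ls}(x)=T_{C_x\cap\Ls}(x)$ holds, and monotonicity of tangent cones under set inclusion then yields $\xi\in T_{\Ls\cap\Pwc{\Ccl}}(x)$. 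Your argument instead works straight from the contingent-cone definition: the sequence $(\tau_i,\xi_i)$ witnessing $\xi\in T_{\Pwc{\Ccl}}(x)$ is pushed into $\Ls$ by a first-order Taylor estimate that exploits the strict inequality $\inner{\nabla\Ly(x)}{\xi}<0$. This is more elementary---it dispenses with Clarke's intersection formula and the auxiliary convex set---and it also sidesteps the paper's step asserting that $\{x+\alpha\xi:\alpha\geq 0\}\cap\Pwc{\Ccl}$ is not a singleton (which, for a merely contingent tangent vector to a nonregular set, is not automatic). The paper's approach, by contrast, makes the connection to the nonsmooth-analysis machinery more transparent and would generalize more readily if $\Ly$ were only Lipschitz rather than $C^1$.
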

\begin{proof}
	Let $\level < \rU$ satisfy the properties in the statement of the claim.
	Let $K_1 = \inter (\Ls )\cap \partial \Pwc{\Ccl}$,
	$K_2 = \Ly^{-1}(\level) \cap \inter (\Pwc{\Ccl})$, and
	$K_3 = \Ly^{-1}(\level) \cap \partial \Pwc{\Ccl}.$
	It is obvious that $K_1, K_2$, and $K_3$ are disjoint and $\bigcup\limits^3_{i = 1} K_i \setminus \Pwd{\Dcl} = \partial(\K \cap \Pwc{\Cw}) \setminus \Pwd{\Dcl}.$
	We have the following three cases:
	\begin{enumerate}[label = \roman*), leftmargin = 1.5\parindent]
		\item \textit{For every $x \in K_1\setminus \Pwd{\Dcl}$,}
		since $T_{\K \cap \Pwc{\Ccl}}(x) = T_{\Pwc{\Ccl}}(x)$, item \ref{item:Ly2} implies \eqref{eq:Tcone}.
		\item \textit{For every $x \in K_2\setminus \Pwd{\Dcl}$,}
		we have $T_{\K \cap \Pwc{\Ccl}}(x) = T_{\Ls}(x)$.
		Applying item~\ref{item:inner1} of Lemma~\ref{lem:innerTcone} to every such $x$ with $h(x) = \Ly(x) - \level,$ hence, $S = \Ls$, and with any point in $\Fcl(x,w_c)$ playing the role of $y$, we have that \eqref{eq:ly1} and item~\ref{item:Ly1} imply $\Fcl(x,w_c) \subset T_{\Ls}(x)$ for every $w_c \in \uxc(x)$.
		Then, with the assumption that $0 \in \uxc(x)$ for every $x \in \Pwc{\Cw}$, \eqref{eq:Tcone} holds.
		\item \textit{For every $x \in K_3\setminus \Pwd{\Dcl}$,}
		we argue that there exists a vector $\xi \in \Fcl(x, 0) \cap T_{\Pwc{\Ccl}}(x)$ that is also contained in $T_{\Ls \cap \Pwc{\Ccl}}(x).$
		To this end, for every $x \in K_3\setminus \Pwd{\Dcl}$, consider $\xi \in \Xi_x$ as defined in \ref{item:Ly3}.
		For a given $x \in K_3\setminus \Pwd{\Dcl}$, let
		$$\widetilde{C}_x: = \{x + \alpha \xi: \alpha \geq 0 \} \cap \Pwc{\Ccl}.$$
		If $\widetilde{C}_x = \{x\}$, we have $\xi = 0$ by the fact that $x \in K_3 \subset \Pwc{\Ccl}$ and item~\ref{item:Ly2}, which contradicts with item~\ref{item:Ly3}.
		Hence, for every such $x$, $\widetilde{C}_x$ has more than one point and $\xi \neq 0$.
		Then, there exists $x' \neq x$ such that $x' = (\alpha' \xi + x) \in \widetilde{C}_x$.
		By definition of $\widetilde{C}_x$, for each $\lambda \in [0,1]$, $x'' = \lambda x + (1 -\lambda) x'$ is also in $\widetilde{C}_x$.
		Let $C_x = \con\{x,x'\}$.
		By construction, $C_x$ is a convex subset of $\widetilde{C}_x$ and is not a singleton.
		Next, for every $x \in K_3 \setminus \Pwd{\Dcl}$, we apply Corollary~\ref{coro:intersect} with $C_1 = C_x$ and $C_2 = \Ls$.
		Item \ref{item:Ly3} implies $T_{C_x}(x) \cap \inter T_{\Ls}(x) \neq \emptyset$.
		Applying Lemma~\ref{lem:innerTcone} with $h(x) = \Ly(x) - \level$, the set $\Ls$ admits a hypertangent at every $x \in \Ly^{-1}(\level) \cap \Pwc{\Cw}$.
		Then, \cite[Corollary 2 of Theorem 2.4.7 (page 56)]{clarke1990optimization} implies the set $\Ls$ is regular at every $x$ with $f(x) = \Ly(x) - \level$.
		Since set $C_x$ is regular at $x$ by construction, Corollary~\ref{coro:intersect} implies that for every $x \in K_3 \setminus \Pwd{\Dcl}$,
		$$T_{C_x}(x) \cap T_{\Ls}(x) = T_{\Ls \cap C_x}(x).$$
		Because of the properties of tangent cones in \cite[Table 4.3, item (1)]{aubin2009set} and the fact that $C_x \cap \Ls \subset \Pwc{\Ccl} \cap \Ls$ by construction of $C_x$, we also have
		$$T_{\Ls \cap C_x}(x) \subset T_{\Ls \cap \Pwc{\Ccl}}(x).$$
		Then, by definition of tangent cone, $\xi \in T_{C_x}(x)$ and
		$\xi \in (T_{\Ls}(x) \cap T_{C_x}(x)) \subset T_{\Ls \cap \Pwc{\Ccl}}(x).$
		Therefore, by assumption, since $\xi \in \Fcl(x, 0) \cap T_{\Pwc{\Ccl}}(x)$ and the fact that $\K \cap \Pwc{\Ccl} = \Ls \cap \Pwc{\Ccl},$ \eqref{eq:Tcone} holds for every $x \in K_3 \setminus \Pwd{\Dcl}$.
	\end{enumerate}

\ConfSp{-4mm}
\end{proof}




\NotConf{
\subsection{Proof of Theorem~\ref{prop:Ly_pre}}
\label{appendix:ProofOfprop:Ly_pre}
	Consider the $\Lss$ restriction to the hybrid system $\Hcl$, denoted $\wHS$ and whose data is $(\wC, \Fcl,\wD$,$\Gcl)$, where the flow set and the jump set are given by $\wC = (\Lss \times \Wc) \cap \Ccl$ and $\wD = (\Lss \times \Wd) \cap \Dcl$, respectively.
	Fix $\level \in (-\infty,\rU)$ such that \eqref{eq:ly1}, \eqref{eq:ly2}, and \eqref{eq:lyJump} hold and $\K$ is nonempty and closed.
	For any nontrivial\footnote{Trivial solution pairs always stay within the set of interest.} $(\phi, w) \in \sol_{\wHS}(\K)$, pick any $(t , j) \in \dom \phi$ and let $0 = t_0 \leq t_1 \leq t_2 \leq ... \leq t_{j+1} = t$ satisfy
	$$\dom \phi \cap ([0, t] \times \{0,1, ..., j\})= \bigcup\limits^j_{k = 0} \left([t_k, t_{k + 1}] \times \{k\}\right).$$
	Next, we show that $\rge \phi \subset \Ls$.
	Proceeding by contradiction, suppose there exists $(t^*,j^*) \in \dom \phi$ that $\phi(t^*,j^*) \in \Lss \setminus \Ls$, i.e.,
	\begin{align}\label{eq:level}
		\level < \Ly(\phi(t^*,j^*)) \leq \rU.
	\end{align}
	Without lose of generality, we have the following two cases:
	\begin{enumerate}[label = \roman*), leftmargin=1.5\parindent]
		\item $\phi$ leaves $\Ls$ by ``jumping'' at $(t^*, j^*)$:\\
		namely, $\phi(t,j) \in \K$ for all $(t,j) \in \dom \phi$ with $t + j < t^*+ j^*$, and $(\phi(t^*,j^*-1), w_d(t^*,j^*-1)) \in (\Ls \times \Wd) \cap \Dcl$.
		Hence, using \eqref{eq:ly2}, it implies $\Ly(\phi(t^*, j^*)) \leq \level$, which contradicts \eqref{eq:level};
		\item $\phi$ leaves $\Ls$ by ``flowing'' during the interval $I^{j^*} : = [t_{j^*}, t_{j^* + 1}]$:\\
		due to absolute continuity of $t \mapsto \phi (t, j)$ on $I^{j^*}$, $\phi$ leaves $\Ls \cap \Pwc{\Ccl}$ and enters $(\Lss \setminus \Ls) \cap \overline{\Pwc{\Ccl}}$.
		More precisely, since $\Ls \subsetneq \Lss$, by closedness of $\Ls$, there exists a hybrid time instant $(\tau^*, j^*) \in \dom \phi$ such that $(\phi(\tau^*,j^*), w_c(\tau^*,j^*))$ $\in (\Ly^{-1}(\level) \times \Wc) \cap \Ccl$ and $(\phi(t, j^*), w_c(t, j^*)) \in ((\Lss \setminus \Ls) \times \Wc) \cap \overline{\Ccl}$ for all $t \in (\tau^*, t^*]$, where $t_{j^*} < \tau^* < t^*\leq t_{j^*+ 1}$.
		Moreover, by \IfConf{item (S1$_w$) in \cite[Definition 2.1]{PartI-Arxiv}}{item~\ref{item:Sw1} in Definition~\ref{def:solution}}, for every $t \in \inter I^{j^*}$, $(\phi(t,j^*), w_c(t,j^*)) \in \wC$.
		Then, \eqref{eq:ly1} implies that for almost all $t \in [\tau^*, t^*]$,
		$$\frac{d}{dt} \Ly(\phi(t,j^*)) \leq 0.$$
		Integrating both sides, we have
		$$\Ly(\phi(t^*, j^*)) \leq \Ly(\phi(\tau^*, j^*)),$$
		which leads to $\Ly(\phi(t^*,j^*)) \leq \Ly(\phi(\tau^*, j^*)) = \level$.
		This contradicts \eqref{eq:level}.
	\end{enumerate}
	
	Next, we establish robust forward pre-invariance of $\K$ for $\wHS$ when \eqref{eq:lyJump} holds.
	By \IfConf{item (S1$_w$) in \cite[Definition 2.1]{PartI-Arxiv}}{item~\ref{item:Sw1} in Definition~\ref{def:solution}} and closedness of $\K$, every $(\phi, w) \in \sol_{\wHS}(\K)$ stays within $\K$ during flow.
	Therefore, if $\phi$ leaves $\K$ and enters $\Ls \setminus \K$, it must have jumped.
	Suppose there exists $(\phi, w) \in \sol_{\wHS}(\K)$ that has its $\phi$ element left $\K$ eventually, while \eqref{eq:lyJump} holds.
	Then, for every such $(\phi, w)$, there exists $(t^*, j^*) \in \dom \phi$ such that $\phi(t^*, j^*) \in \Ls \setminus (\Pwc{\Ccl} \cup \Pwd{\Dcl})$ and $(\phi(t^*, j^* - 1), w_d(t^*, j^* - 1))\in (\K\times \Wd) \cap \Dcl$.
	This leads to a contradiction with \eqref{eq:lyJump}.
	Thus, $\K$ is robustly forward pre-invariant for $\wHS$.
	
	To complete the proof, we show that every $(\phi, w) \in \sol_{\wHS}(\K)$ with $\rge \phi \subset \K$ is also a maximal solution to $\Hcl$.
	Proceeding by contradiction, suppose there exists $(\phi, w) \in \sol_{\wHS}(\K)$ with $\rge \phi \subset \K$ that can be extended outside of $\K$ for $\Hcl$.
	More precisely, there exists $(\psi, v) \in \sol_{\Hcl}(\K)$, such that $\dom \psi \setminus \dom \phi \neq \emptyset$, for every $(t,j) \in \dom \phi, (\psi(t,j), v(t,j))= (\phi(t,j), w(t,j))$ and for every $(t,j) \in \dom \psi \setminus \dom \phi$, $\psi(t,j) \notin \K$.
	Let $(T,J) = \sup \dom \phi$. We have two cases:
	\begin{enumerate}[label = \roman*), resume]
		\item $(\psi, v)$ extends $(\phi, w)$ via flowing:\\
		namely, $(\psi(T,J), v_c(T,J)) = (\phi(T,J)$,  $w_c(T,J)) \in (\K \times \Wc) \cap \Ccl$, $t \mapsto \psi(t,J)$ is absolute continuous on $I^J$.
		By \IfConf{item (S1$_w$) in \cite[Definition 2.1]{PartI-Arxiv}}{item~\ref{item:Sw1} in Definition~\ref{def:solution}}, $(\psi(t,J), v_c(t,J))\in \overline{\Ccl}$ for all $t \in \inter I^J$.
		Thus, it must be the case that $\psi(t,J) \in \overline{\Pwc{\Ccl}}\setminus \Ls$ for some $t \in I^J$.
		Since $\Ls \subsetneq \Lss$, there exists $t^* \in I^J$ such that $\psi(t^*,J) \in \Lss \cap ( \overline{\Pwc{\Ccl}}\setminus \Ls)$, which is an extension of $(\phi, w)$ for $\wHS$.
		This contradicts with the maximality of $(\phi, w)$ to $\wHS$.
		\item $(\psi, v)$ extends $(\phi, w)$ via jumping:\\
		namely, $(\psi(T,J), v_d(T,J)) = (\phi(T,J)$,  $w_d(T,J)) \in (\K \times \Wd) \cap \Dcl$ and $\psi(T, J+1) \notin \K$.
		By \IfConf{item (S2$_w$) in \cite[Definition 2.1]{PartI-Arxiv}}{item~\ref{item:Sw2} in Definition~\ref{def:solution}}, this contradicts with the maximality of $(\phi, w)$ to $\wHS$.
	\end{enumerate}
}

\ConfSp{-3mm}
\subsection{Proof of Theorem~\ref{prop:Ly}}
\label{appendix:ProofOfprop:Ly}
	First, applying \IfConf{\cite[Proposition 3.4]{PartI-Arxiv}}{Proposition~\ref{prop:wexistence}}, there exists a nontrivial solution pair to $\Hcl$ from every $x \in \K$.
	Then, it follows from Theorem~\ref{prop:Ly_pre} that $\K$ is robustly forward pre-invariant for $\Hcl$.
	Such a property implies that every maximal solution pair  $(\phi, w)$ to $\Hcl$ from $\K$ has $\rge \phi \subset \K$.
	Next, we show by applying \IfConf{\cite[Proposition 3.4]{PartI-Arxiv}}{Proposition~\ref{prop:wexistence}} that every maximal solution pair $(\phi, w)$ to $\Hcl$ starting from $\K$ is also complete.
	Case \IfConf{b.1.1) in \cite[Proposition 3.4]{PartI-Arxiv}}{\ref{item:b.1.1} in Proposition~\ref{prop:wexistence}} is excluded for every $(\phi, w) \in \sol_{\Hw}(\K)$ since $\K \cap \Pwc{\Ccl}$ is closed.
	Cases \IfConf{b.1.2) and c.2)}{\ref{item:b.1.2} and \ref{item:c.2}} are excluded since \eqref{eq:Tcone} holds for every $x \in \K \setminus \Pwd{\Dw}$.
	This follows from Lemma~\ref{lem:Tcone}, and the fact that $\K \subset \Pwc{\Cw} \cup \Pwd{\Dw}$ and $T_{\Ls\cap \Pwc{\Cw}}(x) = \reals^n$ for every $x \in \inter (\Ls\cap \Pwc{\Cw}).$
	Case \IfConf{b.2)}{\ref{item:b.2}} is not possible for every maximal solution from $\K$ by assumption \ref{item:Ly4}.
	Finally, when \eqref{eq:lyJump} holds, namely, $\Gcl((\K\times \Wd) \cap \Dcl) \subset \K$, case \IfConf{c.1)}{\ref{item:c.1}} in \IfConf{\cite[Proposition 3.4]{PartI-Arxiv}}{Proposition~\ref{prop:wexistence}} does not hold.
	Therefore, only case \IfConf{a)}{\ref{item:a}} is true for every maximal solution pair starting from $\K$.
	Hence $\K$ is robustly forward invariant for $\Hcl$.

\end{document}